\newtheorem{theorem}{Theorem}[section]
\newtheorem{thm}{Theorem}[section]
\newtheorem{lemma}[theorem]{Lemma}
\newtheorem{prop}[theorem]{Proposition}
\newtheorem{corollary}[theorem]{Corollary}
\newtheorem{definition}[theorem]{Definition}
\newtheorem{examples}[theorem]{Examples}
\newtheorem{remark}[theorem]{Remark}
\newtheorem{thmA}{Theorem}
\newtheorem{corA}[thmA]{Corollary}
\numberwithin{equation}{section}
\newcommand{\bet}{\beta^{(2)}}
\newcommand{\N}{\mathbb{N}}
\newcommand{\BZ}{\mathbb{Z}}
\newcommand{\Z}{\mathbb{Z}}
\newcommand{\Q}{\mathbb{Q}}
\newcommand{\C}{\mathcal{C}}
\newcommand{\cE}{\mathcal{E}}
\newcommand{\F}{\mathbb{F}}
\newcommand{\G}{\Gamma}
\newcommand{\Gr}{\mathcal{G}}
\newcommand{\norm}{\triangleleft}
\newcommand{\sn}{\triangleleft}
\newcommand{\limn}{\lim_{n\to\infty}}       
\newcommand{\limii}{\lim_{i\to\infty}}    
\newcommand{\coker}{{ \rm{coker} }}
\def\Y{\Omega}
\def\X{\Psi}
\def\F{{\rm{F}}}
\def\RG{{\rm{RG}}}
\def\DG{{\rm{DG}}}
\def\FP{{\rm{FP}}}
\def\p{\pi}  
\def\I{\mathcal{I}}
\newcommand{\qu}{\backslash\!\backslash}
\newcommand{\chara}{{\rm{char}}}
\newcommand{\deff}{{\rm{def}}}
\newcommand{\vol}{{\rm{vol}}}
\def\Oplus{\oplus}
\begin{document} 

\title[Volume gradients and homology in towers of residually-free groups]{Volume gradients and homology in towers of residually-free groups}

\author{Martin R. Bridson}

\address{Mathematical Institute,
University of Oxford,
Oxford OX1 3LB,
ENGLAND } 
\email{bridson@maths.ox.ax.uk}

\author{Dessislava H. Kochloukova}

\address
{Department of Mathematics, State University of Campinas (UNICAMP), 
Cx. P. 6065,
13083-970 Campinas, SP, Brazil} 
\email{desi@ime.unicamp.br}
%\thanks{The first author was supported in part by a Senior Fellowship from the EPSRC and by a Wolfson Research Merit Award from the Royal Society.
%The second author was supported in part  by FAPESP and CNPq, Brazil}

%\subjclass[2000]{Primary  Secondary }

%\date{9 Sept 2011; June 25 2012; Oxford jan 2013; 21-27 jan 2013; Oberw june; Bonn july5; Oxford, Durham aug 2013, Ox 27 aug to 6 sept 2013; Oxford 24 May 2015. Belfast}
\date{1 September 2015}

\keywords{volume gradient, $\ell^2$ betti numbers, residually-free groups}

\begin{abstract} We study the asymptotic growth of homology groups and the cellular
volume of classifying spaces as one
passes to normal subgroups $G_n<G$ of increasing finite index in a fixed finitely generated
group $G$, assuming $\bigcap_n G_n =1$. We focus in particular on finitely presented residually free groups,
calculating their $\ell_2$ betti numbers, rank gradient and asymptotic deficiency.

%\noindent
If $G$ is a limit group and $K$ is any field, then for all $j\ge 1$ the
limit of $\dim H_j(G_n,K)/[G,G_n]$ as $n\to\infty$ exists and is zero except for $j=1$, where it equals $-\chi(G)$.  
We prove a homotopical version of this theorem in which the dimension of $\dim H_j(G_n,K)$
is replaced by the minimal number of $j$-cells in a $K(G_n,1)$; this includes
a calculation of the rank gradient and the
asymptotic deficiency of $G$. Both the homological and homotopical
versions are special cases of general results about the fundamental
groups of graphs of {\em{slow}} groups.

%\noindent
We prove that if a residually free group $G$ is of type $\rm{FP}_m$ but not of type $\rm{FP}_{\infty}$,
then there exists an exhausting filtration by normal subgroups of finite index $G_n$ so that
$\lim_n \dim H_j  (G_n, K) / [G : G_n] = 0 \hbox{ for } j \leq m$. If $G$ is of type $\rm{FP}_{\infty}$,
then the limit exists in all dimensions and we calculate it.  
\end{abstract}

\maketitle

\section{Introduction} 
In this article we study the growth of homology groups and the cellular
volume of classifying spaces as one
passes to subgroups $G_n$ of increasing finite index in a fixed finitely generated
group $G$;
we are particularly interested in finitely presented residually
free groups. For the most part we shall restrict our attention
to {\em{exhausting normal chains}} (also called residual chains), i.e.~we shall assume that
the finite-index subgroups $G_n$ are normal in $G$, are nested $G_{n+1}\subset G_n$, and
that $\bigcap_{n\ge 0} G_n = 1$.
It is easy to see that
if the ambient group $G$ is of type $\rm{FP}_m$ over a field
$K$, then $\dim H_i(G_n,K)/[G:G_n]$ is bounded by a constant; but
does this  ratio always tend to a limit as $[G:G_n]\to\infty$,
and if so, will the limit  be a (significant) invariant of $G$
or merely an artefact of the exhausting normal chain $(G_n)$ that we chose?
%(Throughout this article, $\dim H_j(B, K)$ denotes the dimension of  $H_j(B, K)$ as a  vector space over $K$.)

The  {\em{approximation theorem}} of W. L\"uck \cite{Lu1} provides
an emphatic answer for fields of characteristic zero: if $G$ is finitely
presented and of type $\rm{FP}_m$ over $\Z$, then $\lim_n 
\dim H_i(G_n,K)/[G:G_n]$ exists for all $i<m$, the limit is independent of
$(G_n)$, and is equal to the $\ell_2$ betti number $\beta_i(G)$. % in dimension $i$.
%It is not known if an analogous formula exists 
(We refer the reader to \cite{L:book} for a comprehensive introduction to  $\ell_2$ invariants, which
originate in the work of Atiyah \cite{atiy}.)
Much less is known in positive
characteristic, although there are some positive results, e.g.~\cite{elek}, \cite{LLS}, \cite{LO}.

In this article we shall consider homology gradients over arbitrary fields. We shall also study the following homotopical
analogues.  Given a group $B$ of type $\F_m$, we define ${\rm{vol}}_m(B)$ to be the least
number of $m$-cells among all classifying spaces $K(B,1)$ with a finite $m$-skeleton. 
For any group $B$ of type $\F_m$ and any field $K$ one has $\dim_K H_m(B, K)\le\vol_m(B)$.
%(In some circumstances one might balance ${\rm{vol}}_k(B)$ against many cells are used in lower dimensions, but
%that will not be necessary here.)
Note that ${\rm{vol}}_k(B)$ is equal to the rank (minimal number of generators) $d(B)$, and
${\rm{vol}}_2(B)$ bounds the
 {\em{deficiency}} $\deff(B)$, which is\footnote{there are different conventions, with many authors taking the opposite sign for the deficiency
 of an individual presentation, and defining the deficiency of the
 group to be the supremum of $|X|-|R|$} the infimum of  $|R | - |X|$ over all finite presentations $\langle X | R \rangle$ of $B_n$. If $(B_n)$ is a chain of finite-index subgroups in $G$, then
the limit $\lim_n d(B_n)/[G:B_n]$ is known as the {\em{rank gradient}} of $G$ with respect to the chain $(B_n)$
and is often denoted $\RG(G, (B_n))$. Lackenby \cite{lack2} initiated the study of rank gradient in connection
with the study of largeness for 3-manifolds \cite{lack1} and a more combinatorial point of view was developed in \cite{ANJ}.
%If one regards finite generation and finite presentability as the
%finiteness conditions on the skeleta of classifying spaces for a group, 
%then in addition to rank gradient and asymptotic deficiency, one should also
We define the {\em{$m$-dimensional volume gradient}} of $G$ with respect to the chain $(B_n)$  to be $\lim_n \vol_m(B_n)/[G:B_n]$.

We shall calculate these invariants for {\em{residually free groups}}.
\iffalse
$\ell_2$ invariants are analytically defined. They originate in the work of Atiyah \cite{atiy}, and a systematic theory was developed by
Dodziuk \cite{dodz} and then Cheeger-Gromov \cite{CG}. 
In the modern era, L\"uck's work (summarized in \cite{L:book}) brought 
a new, more algebraic understanding to the subject, and further important contributions were made by Gaboriau \cite{gab} who, in particular, used
$\ell_2$ invariants as a powerful tool in his study of measure equivalence.
(Groups $G_1$ and $G_2$ are {\em{measure equivalent}} if they admit commuting,
measure-preserving, free actions on the same probability space.) 
Gaboriau  \cite{gab} proves that if $G_1$ is measure equivalent to $G_2$, then
the $\ell_2$ betti numbers of the groups are proportional, i.e. there exists
a constant $c$ such that $\bet_i(G_1) = c\, \bet_i(G_2)$ for all $i\ge 1$.
\fi
%We are particularly concerned here with residually free groups. 
Let $\F_r$ denote the free group of rank $r$.
A group $G$ is  {\em{residually free}} if for
every $g\in G\smallsetminus\{1\}$ there exists a homomorphism $\phi_g:G\to\F_2$
such that $\phi_g(g)\neq 1$. The class of finitely generated residually free groups is rather wild, harbouring all manner of pathologies. 
On the other hand, groups that
are {\em{fully residually free}} are closely akin to free groups in many ways.
By definition, a group is {\em{fully residually free}} if for every finite
$S\subset G$ there is a homomorphism $\phi_S:G\to \F_2$ that is injective on
$S$. Such groups are now more commonly called {\em{limit groups}}, following
Sela \cite{sela1}. This remarkable class of groups is the class one constructs  when one attempts to formulate the notion of an ``approximately free group" in various natural ways. For example, they are the finitely generated groups that have the same universal theory as a free group (in the sense of first order logic); they are the groups that arise as Gromov-Hausdorff limits of sequences of marked free groups \cite{CGi}; and they are the groups one obtains by taking limits of {\em{stable}} sequences of homomorphisms from a fixed group to a free group \cite{sela1}. Basic examples of limit groups are the fundamental groups of
closed surfaces of positive genus, free abelian groups, doubles of free groups along maximal cyclic subgroups, and free products of any finite collection
of the foregoing groups. Limit groups have finite classifying spaces.

\begin{thmA}[Volume Gradients for Limit Groups]\label{t:limit}
Let $(B_n)$ be an exhausting chain of finite-index normal
subgroups  in a limit group $G$. Then,
\begin{enumerate}
\item {\em{Rank Gradient:}} $  \frac{d(B_n)}{[G\colon B_n]}\to -\chi(G)$ as $n\to\infty$;
\item {\em{Deficiency Gradient:}} $ \frac{\rm{def}(B_n)}{[G\colon B_n]}\to \chi(G)$ as $n\to\infty$; 
\item $ \frac{\rm{vol}_k(B_n)}{[G\colon B_n]}\to 0$ as $n\to\infty$, for all $k\ge 2$.
\end{enumerate}
\end{thmA}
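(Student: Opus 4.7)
My strategy is to establish matching upper and lower bounds for each of (1)--(3). The lower bounds follow at once from the homological theorem announced in the abstract, which asserts that for every field $K$, $\dim_K H_j(B_n,K)/[G\colon B_n]$ tends to $-\chi(G)$ when $j=1$ and to $0$ when $j\ge 2$. Indeed, $d(B)\ge \dim_K H_1(B,K)$ because $H_1(B,K)$ is a quotient of $B^{\mathrm{ab}}\otimes K$, giving $\liminf d(B_n)/[G\colon B_n]\ge -\chi(G)$. For any finite presentation $\langle X\mid R\rangle$ of $B$, the presentation $2$-complex $Y$ has $H_1(Y,K)\cong H_1(B,K)$ and a surjection $H_2(Y,K)\twoheadrightarrow H_2(B,K)$, so $|R|-|X|=\chi(Y)-1\ge \dim H_2(B,K)-\dim H_1(B,K)$ and hence $\liminf \deff(B_n)/[G\colon B_n]\ge \chi(G)$. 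Part (3) needs only an upper bound, since $\vol_k\ge 0$.

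For the upper bounds I would exploit the hierarchical structure of limit groups established by Sela and by Kharlampovich--Myasnikov: every limit group is built by a finite succession of graph-of-groups constructions with finitely generated free abelian edge groups, starting from finitely generated free, free abelian, and closed surface groups. The base cases satisfy the asserted asymptotics by direct computation (for instance, Nielsen--Schreier gives $d(B_n)=1+[G\colon B_n](r-1)$ for $G=\F_r$). The inductive step is a \emph{Key Lemma}: if $G=\pi_1(\mathcal G)$, where $\mathcal G$ is a finite graph of type-$F$ groups whose vertex groups satisfy (1)--(3) and whose edge groups are finitely generated free abelian, then $G$ satisfies (1)--(3) as well. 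This is exactly an instance of the general ``graphs of slow groups'' result flagged in the abstract.

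To prove the Key Lemma, Bass--Serre theory provides a finite graph-of-groups decomposition $B_n=\pi_1(\mathcal B_n)$ in which each vertex (resp.\ edge) group is a finite-index subgroup of a conjugate of some $G_v$ (resp.\ $G_e$), with $[G\colon B_n]/[G_v\colon G_v\cap B_n]$ orbits above each $v$ and analogously for edges; moreover the chain $(G_v\cap B_n)_n$ is exhausting in $G_v$, so the inductive hypothesis applies. One then assembles a $K(B_n,1)$ by gluing classifying spaces of the vertex groups (chosen optimal by the inductive hypothesis) along mapping cylinders of $K(G_e\cap B_n,1)$'s. A direct count shows that the total rank contribution from the vertex orbits above $v$ is asymptotic to $-\chi(G_v)[G\colon B_n]$. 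Using the identity $\chi(G)=\sum_v\chi(G_v)-\sum_e\chi(G_e)$ and the vanishing of $\chi(G_e)$ for free abelian $G_e$ of positive rank, the asymptotic generator and relator counts collapse to $-\chi(G)[G\colon B_n]$ and $o([G\colon B_n])$ respectively, with parallel bookkeeping giving $\vol_k(B_n)/[G\colon B_n]\to 0$ for $k\ge 2$.

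The main obstacle is the Key Lemma itself, in particular the simultaneous control of generators, relators, and higher cells. Naively, the Bass--Serre presentation of $B_n$ contains $\Theta([G\colon B_n])$ stable-letter generators (one per edge outside a spanning tree of $\mathcal B_n$), which are \emph{not} $o([G\colon B_n])$ and must be absorbed by the HNN relations forced by the edge-group inclusions. The cancellation succeeds in every dimension precisely because free abelian edge groups of positive rank have vanishing Euler characteristic, so their standard torus classifying spaces contribute balanced numbers of cells dimension by dimension. Executing this Euler-characteristic bookkeeping while simultaneously producing a genuine $K(B_n,1)$ that is efficient in rank, deficiency, and $\vol_k$ for all $k\ge 2$ is the delicate technical step.
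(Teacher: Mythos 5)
Your overall strategy --- induct on the hierarchy of limit groups, treat the inductive step via a Key Lemma for graphs of groups, assemble explicit $K(B_n,1)$'s from the Bass--Serre decompositions of $B_n$, and then read off rank, deficiency and $\vol_k$ from the resulting cell counts together with multiplicativity of Euler characteristic --- is the route the paper takes. The paper packages the inductive invariant as \emph{slowness above dimension~$1$} (vanishing of $r_k(B_n)/[G:B_n]$ for $k\ge 2$), proves a closure property under graphs of groups (Proposition~\ref{p:good}), and then deduces items~(1) and~(2) for any slow-above-dimension-$1$ group of type~$\F$ from $\chi(B_n)=[G:B_n]\chi(G)$ via Lemma~\ref{lem2} and Theorem~\ref{t:slow}. (The $\omega$-rft structure theory gives cyclic, not general free-abelian, edge groups, but this is harmless since $\Z^r$ is also slow.)

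However, your stated explanation of why the edge contributions do not spoil the estimates misidentifies the mechanism and, taken literally, would not prove what is needed. You write that ``the cancellation succeeds in every dimension precisely because free abelian edge groups of positive rank have vanishing Euler characteristic, so their standard torus classifying spaces contribute balanced numbers of cells dimension by dimension.'' The vanishing of $\chi(G_e)$ plays no role in the control of $\vol_k(B_n)$ for $k\ge 2$, which is the crux. What is actually needed is the paper's slowness of the edge groups: every finite-index subgroup of $\Z^r$ is again $\Z^r$, hence has a classifying space with a uniformly bounded number of cells in each dimension, while $[G_e:B_n\cap G_e]\to\infty$ whenever $G_e$ is infinite. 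Dividing the formula of Proposition~\ref{p:covering} by $[G:B_n]$ and using Remark~\ref{l:count}, the edge term is $r_{k-1}(B_n\cap G_e)/[G_e:B_n\cap G_e]$, which tends to zero because the numerator is bounded and the denominator blows up --- a statement entirely independent of $\chi(G_e)$. (When the edge group is trivial, as in the free-product moves in the hierarchy, $\chi(G_e)=1\ne 0$ yet the argument still works: the stable letters contribute to $r_1$ but are absorbed by the $\chi$-bookkeeping, and there are no edge cells in dimension $\ge 1$.) The $\chi(G_e)=0$ coincidence for $r\ge 1$ only ensures that the Euler-characteristic identity makes the asymptotic rank come out to $-\chi(G)$ rather than a shifted value; it is not why the higher-dimensional cell counts collapse.

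One further caution: you cite ``the homological theorem announced in the abstract'' for your lower bounds, but in the paper that statement (Corollary~B) is derived \emph{from} Theorem~A, so invoking it as input would be circular. The fix is exactly Lemma~\ref{lem2}: once the $\vol_k$ control for $k\ge 2$ is established via the graph-of-groups induction, the limit of $\dim H_1(B_n,K)/[G:B_n]$ follows from multiplicativity of $\chi$ and the alternating-sum formula for $\chi(B_n)$. With that rearrangement, and with the edge mechanism corrected as above, your outline matches the paper's proof.
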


\begin{corA}[Asymptotic  Homology of Limit Groups] Let $K$ be a field. If $G$ is a limit group and
 $( B_n )$ is an exhausting sequence of normal subgroups of finite index in
  $G$, then
$$
\limn \frac{\dim H_j
(B_n, K)}{[G : B_n]} =
\begin{cases} - \chi(G) &\text{if}\ j=1\\
0&\text{otherwise.}
\end{cases}
$$
\end{corA}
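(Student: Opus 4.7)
The plan is to derive the corollary directly from Theorem~\ref{t:limit}(3), together with multiplicativity of the Euler characteristic under passage to finite-index subgroups. Since every limit group is of type $\F$, each $B_n$ admits a finite classifying space $K(B_n,1)$, obtained as the $[G:B_n]$-sheeted cover of a fixed finite $K(G,1)$. In particular, $H_j(B_n,K) = 0$ for all $j$ exceeding $d := \dim K(G,1)$, uniformly in $n$.

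The easy ranges are dispatched immediately. For $j=0$ one has $\dim H_0(B_n,K) = 1$, so the ratio tends to zero. For $j \geq 2$, the pointwise inequality $\dim_K H_j(B_n, K) \le \vol_j(B_n)$ recorded just before the corollary combines with Theorem~\ref{t:limit}(3) to force $\dim H_j(B_n, K)/[G:B_n] \to 0$.

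The case $j = 1$ is extracted from the Euler characteristic. Because each $B_n$ has a finite $K(B_n,1)$, the identity
$$
\chi(B_n) = \sum_{j=0}^{d} (-1)^j \dim_K H_j(B_n, K)
$$
is valid for any field $K$. Combined with the multiplicativity formula $\chi(B_n) = [G:B_n]\,\chi(G)$, this yields
$$
\chi(G) = \sum_{j=0}^{d} (-1)^j \frac{\dim H_j(B_n, K)}{[G:B_n]}.
$$
Letting $n \to \infty$, every term with $j \neq 1$ vanishes by the previous paragraph, leaving $\chi(G) = -\limn \dim H_1(B_n,K)/[G:B_n]$, as required.

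There is essentially no obstacle here: the corollary is a soft consequence of Theorem~\ref{t:limit}(3). The only point that warrants brief care is the validity of the Euler-characteristic identity over a field of arbitrary characteristic, which is secured by the uniformly bounded finite classifying spaces constructed in the first paragraph.
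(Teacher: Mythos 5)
Your proof is correct and follows essentially the same route the paper intends: the $j\geq 2$ cases come from the inequality $\dim_K H_j(B_n,K)\le\vol_j(B_n)$ together with Theorem~\ref{t:limit}(3), and the $j=1$ case is recovered from the Euler characteristic identity and its multiplicativity, exactly as in Lemma~\ref{lem2} of the paper.
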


Note that when $\chara(K)=p$ we do not assume that $[G:B_n]$ is a power
of $p$ (cf.~\cite{lack1}).

\begin{corA}\label{c:beta} If $G$ is a limit group, then $\beta^{(2)}_j(G) = 0$ for $j \not= 1$ and $\beta^{(2)}_1(G) = - \chi(G)$.
\end{corA}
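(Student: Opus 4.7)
The plan is to derive this corollary directly from the preceding Corollary A by invoking L\"uck's approximation theorem, which is discussed in the introduction.

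First I would verify that Lück's theorem applies to a limit group $G$. Limit groups are of type $F$ (stated explicitly in the excerpt), hence of type $\FP_\infty$ over $\Z$ and finitely presented. Moreover, since limit groups are residually free and free groups are residually finite, every limit group is residually finite; therefore an exhausting chain of finite-index normal subgroups $(B_n)$ exists.

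Next I would specialize the previous Corollary A to the field $K=\BQ$ to obtain
\[
\lim_{n\to\infty} \frac{\dim_\BQ H_j(B_n,\BQ)}{[G:B_n]} =
\begin{cases} -\chi(G) & \text{if } j=1,\\ 0 & \text{otherwise}.\end{cases}
\]
By L\"uck's approximation theorem, the left-hand side equals $\bet_j(G)$ for every $j\ge 0$, and the limit is independent of the chain chosen. Combining these two facts yields $\bet_1(G)=-\chi(G)$ and $\bet_j(G)=0$ for $j\neq 1$, which is the desired conclusion.

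There is no serious obstacle here: all the heavy lifting has already been done in Theorem A and its first corollary, and L\"uck's approximation theorem is an off-the-shelf tool that applies because limit groups satisfy the required finiteness hypothesis. The only minor point to check is the existence of a suitable exhausting normal chain, which is immediate from residual finiteness of limit groups.
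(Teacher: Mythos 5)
Your proof is correct and matches the paper's (implicit) argument: the corollary is obtained from the preceding Corollary A by specializing to $K=\BQ$ and applying L\"uck's approximation theorem, which is stated in the introduction and applies because limit groups are of type $\F$ (hence $\FP_\infty$) and residually finite. No issues.
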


The structure theory of limit groups lends itself well to inductive arguments: 
there is a hierarchical structure on the class of such groups; free groups,
free abelian groups and surface groups lie at the bottom level of the hierarchy, and if
one can show that these groups enjoy a certain property then, proceeding by
induction, one can deduce that all limit groups satisfy that property provided that the
property is preserved under the formation of free products and HNN extensions along
cyclic subgroups; see Section \ref{s:limit}. Given the nature of the induction step,
one is drawn naturally into Bass-Serre theory and, in the case of the results stated
above, counting arguments involving double coset decompositions of finite-index subgroups; see sections \ref{s:Hslow} and \ref{s:counting}.

Correspondingly, our proof of Theorem \ref{t:limit}  proceeds via more 
technical results that relate the growth of cellular volume in classifying spaces for
fundamental groups of graphs of groups to the growth in the vertex and edge groups
of the decomposition. The key idea that gives us control in this context is {\em slowness} for groups. A group $G$
of type $\rm{F}$ is {\em{slow above dimension $1$}} if it is residually finite and for every exhausting
normal chain of finite-index normal subgroups
$(B_n)$ there exists a finite $K(B_n,1)$ with  $r_k(B_n)$ $k$-cells such that
$$
\limn  \frac{r_k(B_n)}{[G:B_n]}=0$$
for all $k\ge 2$. We say that
$G$ is {\em{slow}} if it satisfies the additional requirement that the limit exists and is zero for $k=1$ as well. A key result in Section \ref{s:slow} is Proposition \ref{p:good}: 
 If a residually-finite group $G$ is the fundamental group of a finite graph of groups where all of the edge-groups are
slow and all of the vertex-groups are slow above dimension $1$, then $G$ is slow above dimension $1$.

\begin{thmA}\label{t:Aslow} If a residually finite group $G$ of type $\F$ is slow above dimension $1$, then with
respect to every exhausting normal chain 
 $(B_n)$,
\begin{enumerate}
\item Rank gradient:
$$\RG(G, (B_n)) = 
\limn  \frac{d(B_n)}{[G:B_n]}=-\chi(G),$$
\item Deficiency gradient:
$$\DG(G, (B_n)) = 
\limn  \frac{\deff(B_n)}{[G:B_n]}=  \chi(G).$$ 
\end{enumerate}
\end{thmA}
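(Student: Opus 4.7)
My plan is to convert the slowness hypothesis into a quantitative Euler-characteristic estimate and then extract rank and deficiency via two standard homological inequalities: $d(B_n)\ge \dim_{\mathbb{Q}}H_1(B_n,\mathbb{Q})$ and the surjection $H_2(P)\twoheadrightarrow H_2(B_n)$ coming from the Hopf formula. Fix an exhausting normal chain $(B_n)$. By slowness above dimension one, for each $n$ I select a finite $K(B_n,1)$ with $r_k(B_n)$ cells in dimension $k$ satisfying $r_k(B_n)/[G:B_n]\to 0$ for every $k\ge 2$; since $G$ has finite cohomological dimension I may take these models to have uniformly bounded dimension, and after collapsing a maximal tree I may further assume the $0$-skeleton is a single point (altering $r_1(B_n)$ but leaving $r_k(B_n)$ unchanged for $k\ge 2$). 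Because $G$ is of type $\F$, $\chi(B_n)=[G:B_n]\chi(G)$, so the cellular Euler characteristic gives
$$
r_1(B_n)\;=\;1-\chi(B_n)+\sum_{k\ge 2}(-1)^k r_k(B_n),
$$
and hence $r_1(B_n)/[G:B_n]\to -\chi(G)$.

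\emph{Rank gradient.} The $1$-cells of this one-vertex CW model generate $\pi_1=B_n$, yielding $d(B_n)\le r_1(B_n)$ and thus $\limsup_n d(B_n)/[G:B_n]\le -\chi(G)$. For the matching lower bound I use $d(B_n)\ge \dim_{\mathbb{Q}}H_1(B_n,\mathbb{Q})$, which holds because the abelianization of $B_n$ is generated by $d(B_n)$ elements; writing $h_k:=\dim_{\mathbb{Q}}H_k(B_n,\mathbb{Q})$, the identity $\sum(-1)^k h_k=\chi(B_n)$ combined with the trivial bound $h_k\le r_k(B_n)$ for $k\ge 2$ gives
$$
h_1\;\ge\; 1-\chi(B_n)-\sum_{k\ge 2}r_k(B_n),
$$
so $\liminf_n d(B_n)/[G:B_n]\ge -\chi(G)$. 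A useful by-product of this argument is the squeeze $h_1/[G:B_n]\to -\chi(G)$, which I will reuse below.

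\emph{Deficiency gradient.} The $2$-skeleton of the slow model is a finite presentation of $B_n$ on $r_1(B_n)$ generators and $r_2(B_n)$ relators, giving $\deff(B_n)\le r_2(B_n)-r_1(B_n)$; the Euler identity above then forces $\limsup_n\deff(B_n)/[G:B_n]\le \chi(G)$. For the lower bound, any finite presentation $\langle X\mid R\rangle$ of $B_n$ has presentation $2$-complex $P$ satisfying $\chi(P)=1-|X|+|R|=1-h_1+\dim H_2(P;\mathbb{Q})$, and the standard surjection $H_2(P)\twoheadrightarrow H_2(B_n)$ yields $|R|-|X|\ge h_2-h_1$. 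Combining this with $h_2/[G:B_n]\le r_2(B_n)/[G:B_n]\to 0$ and the squeeze $h_1/[G:B_n]\to -\chi(G)$ gives $\liminf_n\deff(B_n)/[G:B_n]\ge\chi(G)$.

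The substantive input is the slowness hypothesis itself, which the paper works to establish elsewhere using Bass-Serre theory and double-coset counting; once the slow models are available, the proof is essentially formal. The only non-obvious points are that the tree collapse does not disturb slowness in dimensions $\ge 2$ (immediate from the cell-count bookkeeping) and that the dimensions of the slow models can be arranged to be uniformly bounded, so that the finite sum $\sum_{k\ge 2}r_k(B_n)/[G:B_n]$ actually tends to zero. Of the four bounds, the genuinely instructive one is the deficiency lower bound, because it is the only place where one must look outside the chosen slow model --- at an arbitrary presentation --- and control the contribution of the relators via $H_2$.
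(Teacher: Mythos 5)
Your proof is correct and follows essentially the same route as the paper: both squeeze $d(B_n)$ between $\dim_{\mathbb{Q}}H_1(B_n,\mathbb{Q})$ and the number of $1$-cells in the slow model (after normalising $r_0$), and both bound $\deff(B_n)$ above by the $2$-skeleton of the slow model and below by $\dim H_2(B_n,\mathbb{Q})-\dim H_1(B_n,\mathbb{Q})$. The only substantive variation is that you rederive the deficiency lower bound from the Euler characteristic of an arbitrary presentation $2$-complex and the surjection $H_2(P)\twoheadrightarrow H_2(B_n)$, whereas the paper simply cites the slightly sharper inequality $\deff(B_n)\ge d(H_2(B_n,\mathbb{Z}))-\mathrm{rk}_{\mathbb{Q}}H_1(B_n,\mathbb{Z})$ from Bridson--Tweedale; your version is self-contained and suffices for the rational estimate that is actually used. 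Your remark about arranging a uniform bound on the dimensions of the slow models (so that $\sum_{k\ge 2}r_k(B_n)/[G:B_n]\to 0$ as a finite sum) addresses a point the paper leaves implicit in its ``finitely many non-zero entries'' clause, and is a sound observation.
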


In Section \ref{s:Hslow} we consider a homological condition called $K$-slowness (Definition \ref{k-slow}) and prove a homological analogue of Theorem \ref{t:Aslow}.

\medskip

In the second part of this paper we focus on the class of finitely presented residually
free groups. This class is much wilder than that of limit groups and the structure theory
is correspondingly more awkward.  
Thus the structure of the arguments in the second half of the paper is more subtle and demanding than those in the
first half: there are many layers of arguments using spectral sequences and they draw on finer structural information about the
groups involved. Our starting point is the theorem of Bridson, Howie, Miller and Short \cite{BHMS1} 
which states that a finitely presented group is residually free if and only if it can be
realised as a subgroup of a direct product of finitely many limit groups so that its projection
to each pair of factors is of finite index.   
Our main result concerning residually free groups is the following.

\begin{thmA}\label{t:rf} Let $m\ge 2$ be an integer, let  $G$ be a residually free group of type $\rm{FP}_m$, and let
$\rho$ be the largest integer such that $G$ contains a direct product of $\rho$
non-abelian free groups.
Then, there exists an exhausting sequence   $(B_n)$
so that for all fields $K$,
\begin{enumerate}
\item
if $G$ is not of type  $\rm{FP}_\infty$, then 
$
\lim_{n}\frac{\dim H_i(B_n, K)}{[G\colon B_n]} = 0$ for all $0\le i\le m$;

\item 
if $G$ is of type $\rm{FP}_\infty$ then for all $j\ge 1$,
$$
\lim_{n\to\infty}\frac{\dim H_j(B_n, K)}{[G\colon B_n]} =  
\begin{cases} (-1)^\rho \chi(G)&\text{if $j=\rho$}\\
0&\text{otherwise}.\\
\end{cases}
$$ 
\end{enumerate}
\end{thmA}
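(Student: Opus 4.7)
The plan is to leverage the Bridson--Howie--Miller--Short embedding theorem for finitely presented residually free groups together with the limit-group case established in Corollary B and the homological slowness machinery of Section \ref{s:Hslow}. By \cite{BHMS1}, one embeds $G \hookrightarrow L_1 \times \cdots \times L_n$ as a subdirect product of limit groups with the virtual surjection to pairs (VSP) property; after dropping trivial factors, we may assume each projection $p_i : G \to L_i$ is surjective. The exhausting chain is built coordinatewise: choose exhausting normal chains $(L_i^{(k)})_k$ in each $L_i$ and set $B_k := G \cap (L_1^{(k)} \times \cdots \times L_n^{(k)})$. The VSP hypothesis ensures that $[G : B_k]$ is comparable, up to a bounded multiplicative factor, to $\prod_i [L_i : L_i^{(k)}]$.

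For part (2), $G$ of type $\FP_\infty$ forces $G$ to have finite index in the full direct product $L_1 \times \cdots \times L_n$, with $\rho$ equal to the number of non-abelian factors. After passing to this finite-index subgroup, the K\"unneth formula yields
\[
\dim_K H_j(B_k, K) \;=\; \sum_{j_1 + \cdots + j_n = j} \prod_{i=1}^{n} \dim_K H_{j_i}(B_k^{(i)}, K).
\]
Dividing by $[G : B_k] = \prod_i [L_i : B_k^{(i)}]$ and applying Corollary B to each factor (noting that abelian factors contribute a ratio tending to $0$ in every degree, consistent with $\chi = 0$), every summand tends to $0$ unless $j_i = 1$ for each non-abelian $L_i$ and no abelian factors are present, i.e.\ unless $j = \rho$ and the product involves only the non-abelian factors. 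In that case the limit equals $\prod_i (-\chi(L_i)) = (-1)^\rho \chi(G)$ by multiplicativity of Euler characteristic for direct products of groups of type $\F$.

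For part (1), the failure of $\FP_\infty$ rules out the virtually-full-product situation, and we proceed by induction on $n$. For $n \ge 2$ consider the short exact sequence $1 \to H \to G \to L_1 \to 1$, where $H := G \cap (\{1\} \times L_2 \times \cdots \times L_n)$ is itself a residually free subdirect product of $L_2 \times \cdots \times L_n$ with VSP. The Lyndon--Hochschild--Serre spectral sequence $E_2^{p,q} = H_p(L_1, H_q(H, K)) \Rightarrow H_{p+q}(G, K)$, applied compatibly to $B_k$ and $L_1^{(k)}$, combined with Corollary B for $L_1$ (asymptotic vanishing of $\dim H_p / [L_1 : L_1^{(k)}]$ except in degree $1$) and with $K$-slowness of $H$ from the homological analogue of Theorem \ref{t:Aslow} developed in Section \ref{s:Hslow}, shows that each $E_2^{p,q} / [G : B_k]$ tends to $0$ in the range $p + q \le m$, and hence so does $\dim H_{p+q}(B_k, K) / [G : B_k]$. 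The main obstacle is verifying that $H$ inherits the requisite finiteness property (some suitable $\FP_{m-1}$ statement) and $K$-slowness from $G$; this requires a careful inductive analysis of how VSP and $\FP_m$ transfer to the kernel of a coordinate projection, together with uniform control of the spectral sequence pages along the chain---exactly the homological counterpart to Proposition \ref{p:good} developed in Section \ref{s:Hslow}.
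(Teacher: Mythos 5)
Your treatment of part (2) is essentially correct and matches the paper: when $G$ is of type $\FP_\infty$, Theorem \ref{t:infty} gives a finite-index subgroup that is a genuine direct product of limit groups, and the K\"unneth formula together with Corollary B then delivers the claimed limit. But your argument for part (1) has two serious gaps, and the paper's route around them is genuinely different.

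First, the claim that the VSP hypothesis makes $[G:B_k]$ comparable to $\prod_i[L_i:L_i^{(k)}]$ is false unless $G$ has finite index in the ambient product $L_1\times\cdots\times L_n$ --- which in part (1) it does not. For a subdirect product of infinite index, $[G:G\cap\prod_i L_i^{(k)}]$ is asymptotically smaller than $\prod_i[L_i:L_i^{(k)}]$ by an unbounded factor (already visible for Stallings--Bieri kernels in $F\times F\times F$). One must normalise by the true index $[G:B_k]$, and relating that to data on the factors takes delicate coset bookkeeping; in the paper this is exactly the role of Lemma~\ref{l:filter}(6), Lemma~\ref{newL} and the index estimate leading to (\ref{limitlimitnew}).

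Second, and more fundamentally, the induction on the number of limit-group factors via $1\to H\to G\to L_1\to 1$ with $H=G\cap(\{1\}\times L_2\times\cdots\times L_n)$ does not run. The kernel $H$ is not in general of type $\FP_{m-1}$, nor does it inherit virtual surjection to $m$-tuples among the remaining factors; the finiteness properties of coordinate kernels of subdirect products are precisely the subtle thing one is trying to understand, and assuming them here is circular. Moreover, in the LHS spectral sequence $E^2_{p,q}=H_p\bigl(L_1^{(k)},H_q(H\cap B_k,K)\bigr)$ the coefficient module $H_q(H\cap B_k,K)$ carries a non-trivial $L_1^{(k)}$-action, so Corollary B --- which concerns trivial coefficients --- does not control the relevant $E^2$-terms. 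The paper sidesteps both problems by choosing an entirely different normal subgroup: $N=F_1\times\cdots\times F_k$, the product of the free normal subgroups $F_i\triangleleft G_i$ furnished by Theorem~\ref{t:desi1}. With this choice $B_n\cap N$ is a product of free groups (so its homology is computable by K\"unneth with all terms in degree at most one per factor) and $B_n/(B_n\cap N)$ is a finitely generated torsion-free nilpotent group, for which homology grows polynomially in Hirsch length. The required estimates are then Propositions~$\X$ and~$\Y$, proved by means of the carefully engineered filtration of Lemma~\ref{l:filter}, and the whole analysis is packaged as Theorem~\ref{maintheorem}, of which part (1) of the present statement is a corollary (with the abelian-factor case handled separately by Lemma~\ref{l:zslow}, a point your sketch omits). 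This choice of $N$ --- free-by-nilpotent rather than one-factor-at-a-time --- is the essential idea that your proposal is missing.
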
  

L\"uck's Approximation theorem tells us that when $K$ is a field of
characteristic $0$,  the limits calculated in Theorem \ref{t:rf} are the $\ell_2$ betti numbers of $G$.  
In this case, one knows that the limit is independent of the sequence $(B_n)$, but for fields of positive characteristic
we do not know this, nor do we know if the limit exists for an arbitrary exhausting normal chain in $G$.

We prove Theorem \ref{t:rf} by using the structure theory of residually
free groups to reduce it to a special case of the following result that we hope will have
further applications. The proof of this theorem is presented in Section \ref{s6}; it accounts for almost half 
 the length of this paper.

\begin{thmA}\label{maintheorem} 
Let $G \subseteq G_1 \times \ldots \times G_k$ be a 
subdirect product of residually-finite groups of type $\F$, each of which
contains a normal free subgroup $F_i<G_i$ such that $G_i/F_i$ is 
torsion-free and nilpotent. Assume $F_i\subseteq G\cap G_i$. Let $m < k$ be an integer, let $K$
be a field, and suppose that each $G_i$ is  $K$-slow above dimension
$1$.

If the projection of $G$ to each $m$-tuple of factors 
$G_{j_1} \times \ldots \times G_{j_m} <G$ is of finite index, then
there exists an exhausting normal chain 
$(B_n)$ in $G$ so that for $0 \leq j \leq m$,
$$\lim_{n\to\infty} \frac{ \dim H_j  (B_n, K)}{ [G : B_n]} = 0.$$ 
\end{thmA}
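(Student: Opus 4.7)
The plan is to exploit the free-by-nilpotent structure of each $G_i$ together with the ``spare factor'' in the assumption $m<k$ to run a Lyndon--Hochschild--Serre spectral sequence, and then induct on $k$ to reduce to the slowness of individual factors. Set $N=G\cap(F_1\times\cdots\times F_k)$; since each $F_i\lhd G_i$, this is normal in $G$, and $Q:=G/N$ embeds in the torsion-free nilpotent group $\prod_iG_i/F_i$, whose Hirsch length I denote by $h$. Using $K$-slowness above dimension $1$ of each $G_i$, pick an exhausting normal chain $(U_n^{(i)})$ in $G_i$ along which $\dim H_j(U_n^{(i)},K)/[G_i:U_n^{(i)}]\to 0$ for all $j\ge 2$, and put
$$B_n\;=\;G\cap\bigl(U_n^{(1)}\times\cdots\times U_n^{(k)}\bigr).$$
Normality of each $U_n^{(i)}$ and the subdirect structure make $(B_n)$ an exhausting normal chain in $G$: any nontrivial $g\in G$ has some nontrivial coordinate $g_i$, which eventually leaves $U_n^{(i)}$.

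Set $N_n=B_n\cap N$ and $Q_n=B_n/N_n\le Q$. The Lyndon--Hochschild--Serre spectral sequence
$$E_2^{p,q}=H_p\bigl(Q_n,H_q(N_n,K)\bigr)\;\Longrightarrow\;H_{p+q}(B_n,K)$$
gives $\dim H_j(B_n,K)\le\sum_{p+q=j}\dim H_p(Q_n,H_q(N_n,K))$. Because $Q_n$ is torsion-free nilpotent of Hirsch length at most $h$, a standard estimate yields $\dim H_p(Q_n,M)\le c_{p,h}\dim M$ for any finite-dimensional $KQ_n$-module $M$, with $c_{p,h}$ independent of $n$. So it suffices to show $\dim H_q(N_n,K)=o([G:B_n])$ for every $q\le m$. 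I would prove this by induction on $k$. The subgroup $N_n$ is a subdirect product of the free groups $N_n^{(i)}:=N_n\cap F_i$; the base case $k=1$ follows from $K$-slowness above dimension $1$ of $G_1$ together with the index identity comparing $[G:B_n]$ with $[G_1:U_n^{(1)}]$. For $k\ge 2$, project $N_n$ onto the first $k-1$ factors to obtain $N_n'$, form the short exact sequence $1\to N_n^{(k)}\to N_n\to N_n'\to 1$, and apply a second LHS spectral sequence. The assumption $m<k$ is crucial here: after discarding the last factor, the $m$-tuple finite-index hypothesis is preserved for $N_n'$ inside $k-1$ factors (still with the same parameter $m$), so the inductive hypothesis applies; the kernel $N_n^{(k)}$ is then controlled by the slowness of $G_k$.

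The main technical obstacle is the index bookkeeping that knits these estimates together. One must verify that the indices $[F_i:N_n^{(i)}]$, $[G_i:U_n^{(i)}]$, and the ``overlap'' indices measuring how $N_n$ fails to equal $\prod_iN_n^{(i)}$, combine via the Künneth-type factors in the two spectral sequences to give growth rates absorbed by $[G:B_n]$. This is the residually-free analogue of the double-coset counting used in the earlier sections for limit groups, now layered on top of the nilpotent-quotient spectral sequence; the strict inequality $m<k$ enters precisely to supply the spare factor required at each step of the induction on $k$.
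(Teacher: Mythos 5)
Your overall framework (LHS spectral sequence for $1 \to N_n \to B_n \to Q_n \to 1$, then control the $E_2$-terms) is the one the paper uses, and your choice $B_n = G \cap (U_n^{(1)} \times \cdots \times U_n^{(k)})$ does have the useful property that $B_n \cap N$ splits as $\prod_j (U_n^{(j)} \cap F_j)$. However, there is a fatal gap in the reduction that follows.

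You reduce the theorem to showing $\dim_K H_q(N_n,K)/[G:B_n] \to 0$ via the bound $\dim H_p(Q_n,M) \le c_{p,h}\dim M$. That bound (from a finite free $\mathbb{Z}Q_n$-resolution with ranks controlled by the Hirsch length $h$) is correct, but it is vacuous here because $H_q(N_n,K)$ is typically \emph{infinite}-dimensional for $q \ge 1$. The free groups $F_j$ furnished by the hypothesis (via Theorem \ref{t:desi1} when the $G_j$ are limit groups) are almost never finitely generated: when $G_j$ is a non-abelian limit group and $G_j/F_j$ is an infinite torsion-free nilpotent group, $F_j$ is a normal subgroup of infinite index, hence free of infinite rank. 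Consequently $N_n^{(j)} = U_n^{(j)} \cap F_j$, being of finite index in $F_j$, is also free of infinite rank, so $H_1(N_n^{(j)},K)$ is infinite-dimensional and the K\"unneth decomposition makes $H_q(N_n,K)$ infinite-dimensional for all $q\ge 1$. Your estimate then gives $\dim E_2^{p,q} \le c_{p,h}\cdot\infty$, which is no bound at all, and the claimed reduction to $\dim H_q(N_n,K)=o([G:B_n])$ is asking for something false. The proposed induction on $k$ does not repair this, since at every level the relevant coefficient modules remain infinite-dimensional.

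What is actually true — and this is the heart of the paper's argument — is that $H_p(Q_n, H_q(N_n,K))$ is finite-dimensional even though $H_q(N_n,K)$ is not. One cannot see this by a dimension-count on the coefficients; one must exploit the nontrivial $Q_n$-action. The mechanism (Lemma \ref{lemma(s-1,1)} in the paper) runs a \emph{second} LHS spectral sequence, this time inside each $G_j$: for a finite-index $L_i \le G_j$ with free kernel $F_j\cap L_i$, the $E^2_{q,1}$-term $H_q\bigl(L_i/(F_j\cap L_i),\ (F_j\cap L_i)^{\mathrm{ab}}\otimes K\bigr)$ sits in a two-row spectral sequence converging to $H_*(L_i,K)$; exactness forces $\dim E^2_{q,1} \le \dim H_{q+1}(L_i,K) + \dim H_{q+2}(\bar L_i,K)$, and \emph{these} are finite because $L_i$ is of type $\mathrm F$. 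Then $K$-slowness of $G_j$ delivers the required $o([G_j:L_i])$ bound. The paper then has to carry out delicate K\"unneth and double-coset bookkeeping (Propositions $\X$ and $\Y$, built on the elaborate filtration of Lemma \ref{l:filter}) to combine the factor-wise estimates, and along the way it establishes the finite-dimensionality of $H_j(B_n,K)$ for $j\le m$ — a nontrivial fact noted explicitly after the theorem statement, which your argument would have to take as input but does not supply. You should rework the reduction so that it never passes through the dimension of $H_q(N_n,K)$ as a $K$-vector space, and instead bounds the twisted homology of the nilpotent quotient directly against the homology of the type-$\mathrm F$ groups $U_n^{(j)}$.
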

 
Our proof of Theorem \ref{maintheorem} shows that the homology groups $H_j  (B_n, K)$ are finite dimensional 
for $j\le m$. The Weak Virtual Surjections Theorem \cite[Cor.~5.5]{BK} implies that this finiteness holds more generally.
 
One would like to promote Theorem \ref{t:rf} to a theorem about volume gradients, in the spirit of 
Theorem \ref{t:slowLimit}, but for the moment this is obstructed by unresolved conjectures concerning the
relationship between finiteness properties of residually free groups and the projections to $m$-tuples of
factors in their existential envelopes (in the sense of \cite{BHMS2}). However, in low dimensions these
conjectures have been resolved, and that enables us to prove the following theorem, which is the
subject of the final section of this paper.

\begin{thmA}\label{t:rfRG} Every  finitely presented residually free group $G$  that is not a limit group admits an exhausting normal chain  $(B_n)$ with respect to which the rank gradient 
$$\RG(G, ( B_n )) = \lim_{n\to\infty} \frac{ d(B_n)}{ [G : B_n]} = 0.$$ 
Furthermore,
if $G$ is of type $\FP_3$ but is not commensurable with a product of two limit groups,
 $(B_n)$  can be chosen so that the deficiency gradient
$\DG(G, ( B_n )) = 0$.  
\end{thmA}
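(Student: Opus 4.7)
By the Bridson--Howie--Miller--Short structure theorem \cite{BHMS1}, a finitely presented residually free group $G$ that is not a limit group embeds as a full subdirect product $G \hookrightarrow L_1 \times \cdots \times L_k$ in a product of limit groups with $k \ge 2$, and with the image of $G$ in each pair $L_i \times L_j$ of finite index. The plan is to manufacture $(B_n)$ from residual chains in the factors: choose exhausting normal chains $(M_n^{(i)})$ in each $L_i$ with common index $N_n := [L_i : M_n^{(i)}] \to \infty$, and set
$$B_n := G \cap \bigl(M_n^{(1)} \times \cdots \times M_n^{(k)}\bigr).$$
Normality, finite index in $G$, and $\bigcap_n B_n = \{1\}$ all follow at once from the analogous properties of each $M_n^{(i)}$ together with $G \hookrightarrow \prod_i L_i$.

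For the rank-gradient assertion, the short exact sequence $1 \to B_n \cap \ker p_k \to B_n \to p_k(B_n) \to 1$ (in which $p_k(B_n) \le M_n^{(k)}$ and $B_n \cap \ker p_k$ is itself a subdirect product in $\prod_{i<k} M_n^{(i)}$) combined with induction on $k$ yields $d(B_n) \le \sum_{i=1}^k d(M_n^{(i)})$. Theorem \ref{t:limit}(1) applied inside each limit group $L_i$ gives $d(M_n^{(i)}) = O(N_n)$, and hence $d(B_n) = O(N_n)$. Conversely, because the image of $G$ in $L_1 \times L_2$ is of finite index, the induced map $G \to L_1/M_n^{(1)} \times L_2/M_n^{(2)}$ is surjective modulo a bounded index, giving $[G:B_n] \ge N_n^2/J$ for some constant $J$. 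Therefore $d(B_n)/[G:B_n] = O(1/N_n) \to 0$, proving $\RG(G,(B_n)) = 0$.

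For the deficiency-gradient assertion, the extra hypotheses force $k \ge 3$ (otherwise $G$ is commensurable with $L_1 \times L_2$) and, via the low-dimensional case of the virtual-surjection conjecture for $\FP_3$ residually free groups alluded to in the paragraph preceding the theorem, force the image of $G$ in each triple $L_i \times L_j \times L_l$ to be of finite index. The same counting argument then gives $[G:B_n] \ge N_n^3/J'$ for some constant $J'$, so it suffices to produce a presentation of $B_n$ with $O(N_n^2)$ relators: together with $d(B_n) = O(N_n)$ this forces
$$|\deff(B_n)|/[G:B_n] \le \bigl(\vol_2(B_n) + d(B_n)\bigr)/[G:B_n] \to 0.$$
I would assemble such a presentation using $O(N_n)$ generators lifted from each $p_i(B_n)$, together with relators of three kinds: (i) relators lifted from each $M_n^{(i)}$, contributing $o(N_n)$ because limit groups are slow above dimension $1$ by Theorem \ref{t:limit}(3); (ii) commutator relations between lifted generators of distinct factors, numbering $O(N_n^2)$; and (iii) ``fibre-product'' relators that cut $B_n$ out of $\prod_i M_n^{(i)}$.

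The principal obstacle is to bound the number of fibre-product relators in (iii) by $O(N_n^2)$. This is exactly where $\FP_3$ is used decisively, through the same low-dimensional extensions of the BHMS $1$--$2$--$3$ theorem that drive Theorem \ref{maintheorem}: they imply that the pair-kernels $G \cap (L_{i_1} \times L_{i_2})$ are finitely presented, and their finite presentations can be propagated into $B_n$ to supply $O(N_n^2)$ diagonal relators. Granted this bound on (iii), $\vol_2(B_n) = O(N_n^2)$ and the deficiency-gradient claim follows.
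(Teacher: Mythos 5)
Your approach is genuinely different from the paper's, but it has real gaps. The paper does not build $B_n$ from residual chains in the factors; instead it singles out the normal subgroup $M := G \cap (G_1 \times \cdots \times G_{m-1})$, invokes Proposition~3.2(3) of \cite{BHMS2} to conclude $M$ is finitely generated (with $G/M \cong G_m$ residually finite), and then applies an abstract lemma (Lemma~\ref{known1}): if $N \lhd G$ is finitely generated, infinite, with $G/N$ infinite and residually finite, then some exhausting chain has vanishing rank gradient. The point of that lemma is that the naive bound $d(\Lambda)-1 \le k(d(\Gamma)-1)$ applied to $N_i \le N$ and $Q_i \le Q$ separately already kills the limit, without any need to control $d(B_n)$ on the scale of $N_n$.

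Your inductive claim $d(B_n) \le \sum_{i=1}^k d(M_n^{(i)})$ is the main gap. First, the literal inequality is false: $B_n \cap \ker p_k = M \cap B_n$ is a finite-index subgroup of $M$, and its rank grows like $[M : M \cap B_n]$, which can be of order $N_n^{k-1}$, not $O(N_n)$. Second, the induction itself does not close: for the inductive hypothesis to apply to $M = G \cap \ker p_k$ inside $\prod_{i<k} L_i$, you need $M$ to project with finite index to each pair $L_{i_1} \times L_{i_2}$; that follows only if the \emph{triple} projections $p_{i_1,i_2,k}(G)$ have finite index, which requires $\FP_3$ (Theorem~\ref{t:desi2}) and is precisely not available in the rank-gradient part of the statement, where $G$ is merely finitely presented. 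So the proof of $\RG(G,(B_n))=0$ as written breaks down exactly when $k \ge 3$. (You can salvage the conclusion by the paper's route, since $[M:M\cap B_n] \to \infty$ and $[L_k : p_k(B_n)] \to \infty$ is all Lemma~\ref{known1} needs, but that is a different argument.)

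For the deficiency part, the overall strategy (use $\FP_3$ to obtain triple-surjection, then count cells in a presentation) is the right instinct, but the execution is only sketched. You correctly reduce to producing presentations of $B_n$ with $O(N_n^2)$ relators, but the bound on the ``fibre-product'' relators is asserted, not proved, and the appeal to finite presentability of the pair-kernels $G \cap (L_{i_1}\times L_{i_2})$ is not what the argument uses: what is needed (and what the paper establishes) is that the single kernel $M = G \cap (G_1 \times \cdots \times G_{m-1})$ is finitely presented, which follows from the Virtual Surjections Theorem of \cite{BHMS2} once the triple projections are known to have finite index. With $M$ finitely presented, the paper then applies Lemma~\ref{known2}, which does the presentation-assembly cleanly (lifting presentations of $N_i$ and $Q_i$ and counting the conjugation relators) without ever having to exhibit an explicit $O(N_n^2)$-relator presentation of $B_n$. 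Finally, note that the paper handles separately the case in which some $G_i$ is abelian (equivalently $G$ has non-trivial centre), which your proposal does not address.
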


The recent work of M. Abert and D. Gaboriau \cite{AG} on higher-cost for group actions
recovers parts (1) and (2) of our Theorem A and establishes similar results for larger classes of groups, including mapping class groups. Earlier work of Abert and Nikolov \cite{AN} links
rank gradient to cost in the sense of measurable group theory. 
Part of the original motivation for this work also comes from measurable
group theory. Groups
$G_1$ and $G_2$ are {\em{measure equivalent}} if they admit commuting,
measure-preserving, free actions on the same probability space. Gaboriau \cite{dg1} proved that surface groups
are measure equivalent to free groups  and asked
if this was true of all limit groups. This has been answered in the affirmative for elementarily free groups
by Bridson, Tweedale and Wilton \cite{BTW} but remains open for limit
groups in general. 
Gaboriau  \cite{gab} proved that if $G_1$ is measure equivalent to $G_2$, then
the $\ell_2$ betti numbers of the groups are proportional, i.e. there exists
a constant $c$ such that $\bet_i(G_1) = c\, \bet_i(G_2)$ for all $i\ge 1$.
The $\ell_2$ betti numbers of a finitely generated
free group are zero except  in dimension $1$, where $\bet_1(F_r)  = -\chi(F)$, so Corollary \ref{c:beta} is
compatible with a positive answer to Gaboriau's question.

\noindent{\bf{Acknowledgements.}}
The first author was supported in part by a Senior Fellowship from the EPSRC and by a Wolfson Research Merit Award from the Royal Society.
The second author was supported in part  by FAPESP and CNPq, Brazil. We thank these organisations. We also
thank the organisers of the IHP conference in 2011 where these results were first presented.

\section{Limit groups and Residually Free Groups}\label{s:limit}

In this section we isolate the basic properties of residually free groups and limit groups
that we need in later sections, providing references where the reader unfamiliar with these fascinating 
groups can find more details.

\subsection{$\omega$-residually free towers and splittings of limit groups}

Limit groups have several equivalent definitions, each highlighting a different aspect of their
nature. In the introduction we defined them to be the finitely generated fully residually free groups,
but for the practical purposes of proving our theorems it is most useful to work with one of the
less intuitively-appealing definitions: a {\em{limit group}} is a 
 finitely generated subgroup of an {\em{$\omega$-residually free tower groups}} (abbreviated $\omega$-rft group);
 see
 \cite{sela2} Theorem 1.1
and \cite{KM}.

{\em{$\omega$-rft spaces}} of height $h\in\N$ are defined by an induction on $h$ and, by definition,
an $\omega$-rft  group is
the fundamental group of an $\omega$-rft space.
A height $0$ tower is the 1-point union of a finite collection of circles, closed
hyperbolic surfaces and tori (of arbitrary dimension), except that the closed surface of
Euler characteristic $-1$ is excluded.
An $\omega$-rft space $Y$ of height $h$ is obtained from an $\omega$-rft space $Y_0$ of height $h-1$ by
adding either (1) a torus $T$
 of some dimension, attached to $Y_0$ by identifying a coordinate circle in $T$ with any
loop $c$ in $Y_0$ such that $[c]$ generates a maximal cyclic subgroup of $\pi_1Y_0$,
or (2) a connected, compact surface $S$ that is either a punctured torus or has
Euler characteristic at most $-2$, where the attachment identifies each boundary
component of $S$ with a homotopically non-trivial loop in $Y_0$, chosen 
so that there exists a retraction $r : Y\to Y_0$ sending $\pi_1S$ to a
non-abelian subgroup of $\pi_1Y_0$.

By definition, {\em{the height of a limit group}} $G$ is the minimal height of an 
$\omega$-rft group that has a subgroup
isomorphic to $G$. Limit groups of height $0$ are free products of finitely many free abelian
groups and of surface groups of Euler characteristic at most $-2$. The
Seifert-van Kampen Theorem decomposes an $\omega$-rft group as a 2-vertex
graph of groups with cyclic edge group, where one of the vertices is an $\omega$-rft group
of lesser height, the other is free or free-abelian of finite rank at least $2$, and the
edge groups are cyclic.  Thus  one can apply Bass-Serre theory to decompose an arbitrary limit group 
as follows --- see \cite[Lemma~1.3]{BH}.

\begin{lemma} If $G$ is a limit group of height $h \geq 1$, then $G$ is the fundamental
group of a finite bipartite graph of groups $\Delta$ in which the edge groups are cyclic; the vertex groups
fall into two types corresponding to the bipartite partition of vertices: type (i) vertex groups
are isomorphic to subgroups of a limit group of height $h - 1$; type (ii) vertex groups are
all free or all free-abelian.
\end{lemma}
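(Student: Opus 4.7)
The plan is to exhibit the required splitting of $G$ by restricting to $G$, via Bass-Serre theory, the natural top-level graph-of-groups decomposition of the ambient tower group in which $G$ embeds.

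By the definition of height, there is an embedding $G\hookrightarrow H=\pi_1(Y)$ into an $\omega$-rft group of minimal height $h$, where $Y$ is obtained from an $\omega$-rft space $Y_0$ of height $h-1$ by attaching a single top-level block $B$. By the definition of an $\omega$-rft space, $B$ is either a torus (of some dimension) or a compact surface of the prescribed type, attached along a single coordinate circle or along its boundary components respectively. Seifert--van Kampen applied to this attachment presents $H$ as the fundamental group of a finite bipartite graph of groups $\Delta_H$ with: one type (i) vertex carrying $\pi_1(Y_0)$, which is an $\omega$-rft group of height $h-1$; one type (ii) vertex carrying $\pi_1(B)$, which is free (when $B$ is a surface with boundary) or free-abelian (when $B$ is a torus); and one edge per attaching loop, each edge group infinite cyclic.

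Let $T$ be the Bass-Serre tree of $\Delta_H$, bipartite by construction, and consider the restricted action of $G\le H$ on $T$. If $G$ fixed a type (i) vertex it would embed in a conjugate of $\pi_1(Y_0)$, contradicting the minimality of $h$; if $G$ fixed a type (ii) vertex it would be a subgroup of a free or free-abelian group, hence of height $0$, contradicting $h\ge 1$. Therefore $G$ acts without global fixed vertex on $T$, and admits a (unique) minimal invariant subtree $T_G\subseteq T$. The induced graph of groups $\Delta:=G\backslash T_G$ inherits a bipartite structure from $T$, and its stabilizers are precisely the intersections of $G$ with $H$-conjugates of the corresponding stabilizers in $\Delta_H$: every edge group of $\Delta$ is a subgroup of an infinite cyclic group (hence cyclic); every type (i) vertex group is a subgroup of a conjugate of $\pi_1(Y_0)$, that is, a subgroup of a limit group of height $h-1$; and every type (ii) vertex group is uniformly free (when $B$ is a surface) or uniformly free-abelian (when $B$ is a torus), according to the fixed nature of $B$.

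The step I expect to be the main obstacle is establishing finiteness of $\Delta$, equivalently cocompactness of the action of $G$ on $T_G$. This, however, is a standard consequence of finite generation of $G$: a finitely generated group acting on a tree without global fixed vertex has cocompact action on its minimal invariant subtree (see Serre's \emph{Trees}, or alternatively invoke Dunwoody accessibility, since limit groups are finitely presented and the edge stabilizers appearing here are cyclic). Once cocompactness is in place, the fundamental theorem of Bass-Serre theory gives $\pi_1(\Delta)\cong G$, and $\Delta$ is the required finite bipartite graph of groups.
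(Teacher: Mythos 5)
Your proof is correct and takes essentially the same route the paper indicates: the paper itself observes, via Seifert--van Kampen applied to the attachment of the top block, that an $\omega$-rft group of height $h$ splits as a two-vertex graph of groups with one vertex group of height $h-1$, the other free or free-abelian, and cyclic edge groups, and then says ``one can apply Bass-Serre theory to deduce the following,'' citing \cite[Lemma~1.3]{BH} for the details. What you have done is fill in precisely those Bass-Serre details (restriction of the action to $G$, elimination of a global fixed vertex using minimality of the height, passage to the minimal invariant subtree, and cocompactness from finite generation), so no substantively different idea is involved.
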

 
In the first part of this paper, the only properties of a limit group that we shall use are residual
finiteness and the following decomposition property: 

\begin{corollary}\label{allInC}
If a class of groups $\mathcal C$ contains all finitely generated free abelian and surface groups
and is closed under the formation of  amalgamated free products and HNN extensions with cyclic amalgamated
groups, then $\mathcal C$ contains all limit groups.
\end{corollary}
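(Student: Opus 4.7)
The plan is to argue by induction on the height $h$ of a limit group $G$, using the structural lemma proved immediately before the corollary as the engine of the induction step.

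For the base case $h=0$, the preceding discussion tells us that a height-zero limit group is the fundamental group of a $1$-point union of circles, tori (of arbitrary dimension), and closed hyperbolic surfaces. Equivalently, such a group is a finite free product of finitely generated free abelian groups and closed surface groups. Each factor lies in $\mathcal{C}$ by hypothesis, and a free product $A * B$ is the amalgamated free product $A *_1 B$ over the trivial (hence cyclic) subgroup, so iterated use of the closure hypothesis places the whole free product in $\mathcal{C}$.

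For the inductive step, assume every limit group of height less than $h$ lies in $\mathcal{C}$ and let $G$ have height $h\ge 1$. Apply the lemma to write $G=\pi_1\Delta$ where $\Delta$ is a finite bipartite graph of groups with cyclic edge groups, type (ii) vertex groups free or free-abelian, and type (i) vertex groups isomorphic to subgroups of limit groups of height $h-1$. Type (ii) vertex groups already lie in $\mathcal{C}$: free-abelian groups are in $\mathcal{C}$ by hypothesis, and a finitely generated free group is an iterated free product of copies of $\mathbb Z$, hence in $\mathcal{C}$ by the base-case argument. For type (i) vertex groups, one observes that they are finitely generated (because $G$ is, and vertex groups in a finite graph-of-groups decomposition of a finitely generated group are themselves finitely generated in this setting) and that a finitely generated subgroup of a limit group is again a limit group of no greater height; thus the induction hypothesis applies and type (i) vertex groups also lie in $\mathcal{C}$.

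It remains to assemble $\pi_1\Delta$ from its vertex groups. Here one appeals to standard Bass--Serre theory: choose a spanning tree $T\subset\Delta$, form the tree product of the vertex groups by a sequence of amalgamated free products along the edge groups of $T$, and then realise each edge of $\Delta\setminus T$ as an HNN extension over its cyclic edge group. Every step of this assembly is an amalgamation or HNN extension along a cyclic subgroup, so by the closure hypothesis on $\mathcal{C}$ each intermediate group, and finally $G=\pi_1\Delta$ itself, lies in $\mathcal{C}$. The main point requiring care is verifying that the intermediate groups at each assembly step genuinely satisfy the closure hypotheses (in particular that the amalgamating subgroup sits inside the previously-assembled factor), but this is exactly the content of Bass--Serre's normal form for graphs of groups and presents no real obstacle.
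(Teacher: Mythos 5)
Your proof is correct and takes essentially the approach the paper intends: the paper presents the corollary as an immediate consequence of the preceding lemma on bipartite graph-of-groups decompositions, and your induction on height, combined with the observation that type~(i) vertex groups are finitely generated subgroups of lower-height limit groups (hence themselves limit groups of height $\leq h-1$), is exactly that argument spelled out. The only point that deserves a slightly more careful statement is the claim that the vertex groups of $\Delta$ are finitely generated; this does hold here (it is part of the construction in \cite{BH} and follows from coherence of limit groups), but it is worth flagging because it is not automatic for an arbitrary graph-of-groups decomposition of a finitely generated group.
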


In the second part of the paper we shall need the following additional property of limit groups,
which was established by Kochloukova \cite{Koch}.

\begin{theorem}\label{t:desi1} Every limit group $G$ has a normal subgroup $F$ that is free with $G/F$ torsion-free and nilpotent.
\end{theorem}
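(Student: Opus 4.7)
The plan is to seek $F$ as a sufficiently deep term of the lower central series of $G$: set $F := \gamma_c G$ for appropriately large $c$, so that $G/F$ is automatically nilpotent of class less than $c$ and $F$ is characteristic (and in particular normal in $G$). It remains to establish (i) that $G/\gamma_c G$ is torsion-free for every $c$, and (ii) that $\gamma_c G$ is free for some $c$. Claim (i) follows from Magnus's classical theorem that every finitely generated free group is residually torsion-free nilpotent: a putative torsion element $g\gamma_c G \in G/\gamma_c G$ would be separated from $\gamma_c G$ by a homomorphism $\varphi \colon G \to \mathbb{F}_2$ (which exists because $G$ is fully residually free), producing a non-trivial torsion element of $\mathbb{F}_2/\gamma_c \mathbb{F}_2$ and contradicting Magnus.

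For (ii) I would argue by induction on the height $h$ of $G$ in the $\omega$-rft hierarchy. In the base case $h = 0$, $G$ is a free product of finitely generated free abelian groups and closed surface groups of Euler characteristic at most $-2$. Taking $c = 2$, Kurosh's subgroup theorem expresses $[G,G]$ as a free product of conjugates of the intersections $[G,G] \cap G_i$ together with a free complement. Since $G^{ab} = \bigoplus_i G_i^{ab}$ for a free product, one has $[G,G] \cap G_i = [G_i, G_i]$: this is trivial when $G_i$ is free abelian, and free when $G_i$ is a surface group (being an infinite-index subgroup of a surface group of negative Euler characteristic). Hence $[G,G]$ is free.

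For the inductive step $h \geq 1$, Lemma 2.1 presents $G$ as $\pi_1(\Delta)$, where $\Delta$ is a finite bipartite graph of groups with cyclic edge groups, the type (i) vertex groups being limit groups of height less than $h$ and the type (ii) vertex groups being free or free-abelian. The induction hypothesis gives, for each type (i) vertex group $V$, a free normal subgroup $F_V$ of $V$ with $V/F_V$ torsion-free nilpotent. I would then choose $c$ large enough that (a) $\gamma_c G \cap C_e = 1$ for every cyclic edge group $C_e$ of $\Delta$---this holds for $c$ large because the generator of $C_e$ maps to an infinite-order element of the torsion-free nilpotent quotient $G/\gamma_c G$---and (b) $\gamma_c G \cap gVg^{-1} \subseteq gF_Vg^{-1}$ for every vertex group $V$ and every coset representative $g$. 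Granted (a) and (b), $\gamma_c G$ acts on the Bass--Serre tree of $\Delta$ with trivial edge stabilizers and free vertex stabilizers, so it decomposes as a free product of free groups together with a free factor coming from the quotient graph, and is therefore free.

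The main obstacle is condition (b): the lower central series of $G$ does not restrict cleanly to a subgroup, since an element of $V$ can become a commutator in $G$ without being a commutator in $V$, so the inclusion $\gamma_c G \cap V \subseteq F_V$ is not immediate for any particular $c$. I would address this by exploiting the retractions characteristic of the $\omega$-rft construction---each tower step yields a retraction $G \to G_0$ onto the previous stage---combined with the induction hypothesis applied to $G_0$. Using these retractions one can compare the lower central series of $G$ with that of each vertex group modulo its free-by-nilpotent structure, and expect that for $c$ sufficiently large relative to the depth of the tower and the nilpotency classes of the quotients $V/F_V$, elements of $V$ lying deep in $\gamma_c G$ are forced into $F_V$. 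Making this precise is the technical heart of the argument.
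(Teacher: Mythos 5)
The paper does not prove this result; it is quoted from Kochloukova \cite{Koch}, so there is no internal proof to compare against. Your strategy of taking $F = \gamma_c G$ for large $c$ is a natural one, and your handling of the base case and of condition (a) is sound: for (a), note that for each edge generator $x_e \neq 1$ residual freeness gives $\varphi$ with $\varphi(x_e) \neq 1$, Magnus gives $d$ with $\varphi(x_e) \notin \gamma_d \mathbb{F}_2$, and then $\langle x_e\rangle \cap \gamma_d G = 1$; taking the maximum over the finitely many edges gives a single $c$. However, there are two real gaps. The first is in claim (i): fully residually free supplies, for each nontrivial $g$, a homomorphism $\varphi \colon G \to \mathbb{F}_2$ with $\varphi(g) \neq 1$, and Magnus then gives some $d = d(g)$ with $\varphi(g) \notin \gamma_d \mathbb{F}_2$, whence $\langle g\rangle \cap \gamma_d G = 1$. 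But $d$ depends on $g$. This shows that $G$ is residually (torsion-free nilpotent); it does \emph{not} show that $G/\gamma_c G$ is torsion-free for a fixed $c$. Since $\gamma_c G$ is typically infinitely generated, full residual freeness (which separates finite sets) gives no way to produce $\varphi$ with $\varphi(g) \notin \gamma_c \mathbb{F}_2$ for a given $g \notin \gamma_c G$. The assertion that $G/\gamma_c G$ is torsion-free for every limit group and every $c$ is a substantial statement needing its own proof (Magnus handles free groups, Labute surface groups, but nothing you cite covers the general case). A safer move is to replace $\gamma_c G$ by its isolator $\sqrt{\gamma_c G}$ (the preimage of the torsion subgroup of the nilpotent group $G/\gamma_c G$), which has torsion-free nilpotent quotient by construction --- but then one must prove $\sqrt{\gamma_c G}$ is free, which is a different task.

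The second and larger gap is condition (b): $\gamma_c G \cap gVg^{-1} \subseteq gF_V g^{-1}$. You flag this yourself as ``the technical heart of the argument'' and offer only a heuristic appeal to the $\omega$-rft retractions. As you observe, $\gamma_c G \cap V$ can strictly contain $\gamma_c V$, and the free subgroup $F_V$ produced by your inductive hypothesis is some particular normal subgroup of $V$ with no evident relationship to $\gamma_c G \cap V$; nothing you say forces elements of $V$ that lie deep in $\gamma_c G$ to land in $F_V$. Without a concrete argument here the Bass--Serre freeness of $\gamma_c G$ does not follow, so the inductive step is incomplete. This is exactly where the substantive work in Kochloukova's proof must be, and the sketch stops short of engaging with it.
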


\subsection{Residually free groups and subdirect products}

By definition, a group $G$ is residually free if it is
isomorphic to a subgroup of an unrestricted direct product of free groups. 
In general, one requires infinitely many factors in this direct
product, even if $G$ is finitely generated. For example, the 
fundamental group of a closed orientable surface $\Sigma$
is residually free but
it cannot be embedded in a finite direct product of free groups
if $\chi(\Sigma)<0$, since $\pi_1\Sigma$
does not contain $\Z^2$ and is not a subgroup of a free group.
However,  
Baumslag, Myasnikov and Remeslennikov \cite[Corollary 19]{BMR} 
proved that one can force the enveloping product to be finite
at the cost of replacing free groups by 
{\em{limit groups}} (see also \cite[Corollary 2]{KM} and \cite[Claim 7.5]{sela1}).  

Bridson, Howie, Miller and Short \cite{BHMS1}, \cite{BHMS2} characterized  {\em{the finitely presented residually
free groups}} as follows:

\begin{theorem}\label{t:bhms} A finitely presented group $G$ is residually free if and only if
it can be embedded in a direct product of finitely many 
limit groups $G\hookrightarrow D:=\Lambda_1\times\dots\times \Lambda_n$ so that the intersection with 
each factor is non-trivial and the projection $p_{ij}(G)<\Lambda_i\times \Lambda_j$ to each pair of factors is a subgroup of finite index.

Moreover, in these circumstances, there is a subgroup of finite index $D_0<D$ such that $G$ contains
the $(n-1)$-st term of the lower central series of $D_0$.
\end{theorem}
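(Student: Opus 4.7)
\textbf{Proof plan for Theorem \ref{t:bhms}.}

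The \emph{``if''} direction requires none of the additional conditions and is immediate: given $1 \neq g \in G \hookrightarrow \Lambda_1 \times \cdots \times \Lambda_n$, choose $i$ with the $i$-th coordinate $g_i$ non-trivial; because the limit group $\Lambda_i$ is fully residually free, some homomorphism $\Lambda_i \to \F_2$ sends $g_i$ to a non-identity element, and composing with the $i$-th coordinate projection separates $g$ from $1$ in $G$.

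For the \emph{``only if''} direction, I start with a finitely presented residually free group $G$ and apply the Baumslag--Myasnikov--Remeslennikov embedding theorem to obtain $G \hookrightarrow \Lambda_1 \times \cdots \times \Lambda_N$ into a product of finitely many limit groups (finitely many since $G$ is finitely generated). Two easy reductions follow. If $G \cap \Lambda_i = 1$ for some $i$, the projection $G \to \prod_{j \neq i} \Lambda_j$ is still injective, so that factor can be deleted; iterating, we may assume $G \cap \Lambda_i \neq 1$ for every $i$. Replacing each $\Lambda_i$ by the limit group $p_i(G)$ (a finitely generated subgroup of a limit group is itself a limit group), we may further assume each single projection is surjective. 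The decisive step is to arrange the finite-index condition on pairs, $[\Lambda_i \times \Lambda_j : p_{ij}(G)] < \infty$. My plan is to induct on the combined height of the $\Lambda_i$: using Corollary \ref{allInC}, decompose each $\Lambda_i$ as an amalgam or HNN extension over a cyclic subgroup, use this to decompose $\Lambda_i \times \Lambda_j$ as a graph of groups, and analyse $p_{ij}(G)$ via Bass--Serre theory together with a Mayer--Vietoris argument controlled by a finite presentation of $G$. The principal obstacle, and what accounts for the depth of \cite{BHMS1}, is ruling out Stallings--Bieri type pathologies, namely fibre products that are finitely generated but not finitely presented; it is precisely the finite presentability of $G$, beyond mere finite generation, that excludes these.

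For the \emph{``moreover''} clause, once the pairwise finite-index condition is in hand, residual finiteness of each $\Lambda_i$ allows me to choose a characteristic finite-index subgroup $D_0 = \Lambda_1' \times \cdots \times \Lambda_n' \sn D$ small enough that $G \cap D_0$ surjects onto every pair $\Lambda_i' \times \Lambda_j'$. To show $\gamma_{n-1}(D_0) \subseteq G$, I induct on $n$: the base case $n=2$ is pair-surjectivity itself, and for the inductive step I expand an $(n-1)$-fold iterated commutator $[x_1,[x_2,\dots,x_{n-1}]\dots]$ and use pair-surjectivity together with the inductive hypothesis applied to the quotients $D_0/\Lambda_k'$ to lift each $x_j$ to $\tilde x_j \in G$ whose image agrees with $x_j$ modulo $\Lambda_k'$ for well-chosen $k$. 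A commutator calculation, relying on the fact that $[\Lambda_i',\Lambda_j'] = 1$ for $i \neq j$ inside $D_0$, then shows that $[\tilde x_1,\dots,\tilde x_{n-1}]$ differs from the original iterated commutator by error terms that are themselves commutators one can reabsorb into $G$ via another application of pair-surjectivity. The bookkeeping is delicate but purely elementary once the pairwise condition is available, so the main weight of the whole theorem sits in the ``only if'' direction above.
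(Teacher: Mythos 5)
This theorem is not proved in the present paper; it is stated as a result of Bridson, Howie, Miller and Short and cited directly from \cite{BHMS1} and \cite{BHMS2}. There is therefore no proof in the paper against which your proposal can be checked line by line, and I must assess it on its own terms.

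Your ``if'' direction is correct, and you rightly observe that it uses only the embedding into a finite product of limit groups, not the pairwise finite-index condition. Your two reductions in the ``only if'' direction (deleting factors meeting $G$ trivially, then replacing $\Lambda_i$ by $p_i(G)$) are also correct and standard. The genuine gap is the step you yourself flag as ``decisive'': showing that the pairwise projections $p_{ij}(G)$ can be arranged to have finite index. What you offer there is not a proof but a programme — induct on height, decompose $\Lambda_i\times\Lambda_j$ as a graph of groups, analyse $p_{ij}(G)$ by Bass--Serre theory plus Mayer--Vietoris — and you concede that the actual obstacle (Stallings--Bieri phenomena, which finite presentability must exclude) is exactly what makes \cite{BHMS1} deep. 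That concession is accurate, but it means the central step of the theorem is left unproved. Moreover, the route you sketch is not the one BHMS actually take: their argument rests on the $\FP_m$ theory of subdirect products (via LHS spectral sequences and the structure theory of limit groups, including the nilpotent-quotient technology behind Kochloukova's Theorem \ref{t:desi1}), not on a height induction with Mayer--Vietoris applied to product decompositions. It is far from clear that your proposed induction closes; as written it does not. Your treatment of the ``moreover'' clause is closer to a real argument — the choice of a characteristic finite-index $D_0$ with $G\cap D_0$ pair-surjective, followed by an induction on $n$ with iterated commutators and the relation $[\Lambda_i',\Lambda_j']=1$ for $i\neq j$, is indeed the right mechanism, and the base case $n=2$ is handled correctly — but the ``delicate but purely elementary'' bookkeeping is asserted rather than carried out, so this part too remains a sketch. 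In short: the proposal correctly locates where the difficulty lies but does not surmount it.
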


The ``only if" implication in the first sentence of the above theorem was generalized by  Kochloukova \cite{Koch} as follows.

\begin{theorem}\label{t:desi2} Let $G$ be a subdirect product of non-abelian limit groups and let $s\ge 2$ be an integer. If $G$ is of type $\rm{FP}_s$, then
the projection of $G$ to the direct product of each $s$-tuple of these limit groups has finite index. 
\end{theorem}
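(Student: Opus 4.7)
My plan is to argue by induction on $s$, with base case $s=2$ being exactly the ``only if'' direction of the first sentence of Theorem \ref{t:bhms}. Throughout, I would assume that $G$ is a \emph{full} subdirect product (i.e.\ $p_i(G)=\Lambda_i$ for each $i$); otherwise one replaces each $\Lambda_i$ by $p_i(G)$, which is again a limit group as a finitely generated subgroup of $\Lambda_i$, and discards any factor whose image is abelian (handling such factors separately in a standard way).

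Assume the statement for exponent $s-1$ and suppose $G\le\Lambda_1\times\cdots\times\Lambda_n$ is a full subdirect product of type $\rm{FP}_s$. Fix an $s$-tuple, WLOG $\{1,\dots,s\}$, and let $H=p_{1,\dots,s}(G)$. Because $\rm{FP}_s$ implies $\rm{FP}_{s-1}$, the inductive hypothesis already gives that the projection of $G$ (and hence of $H$) onto every $(s-1)$-subtuple has finite index. Using Theorem \ref{t:desi1}, I would pass to finite-index subgroups of each $\Lambda_i$ so that each $\Lambda_i$ has a normal free subgroup $F_i$ with $\Lambda_i/F_i$ torsion-free nilpotent; the $(s-1)$-tuple hypothesis then pushes down to the nilpotent quotients and forces the image of $H$ in $(\Lambda_1/F_1)\times\cdots\times(\Lambda_s/F_s)$ to be of finite index. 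This reduces the theorem to showing that $H\cap(F_1\times\cdots\times F_s)$ has finite index in $F_1\times\cdots\times F_s$, i.e.\ to the corresponding result for subdirect products of free groups.

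At this point I would exploit the $\rm{FP}_s$-property of $G$ via a Lyndon--Hochschild--Serre spectral sequence. Consider the extension
$$
1\longrightarrow K\longrightarrow G\xrightarrow{p_{s+1,\dots,n}} p_{s+1,\dots,n}(G)\longrightarrow 1,
$$
whose kernel $K=G\cap(\Lambda_1\times\cdots\times\Lambda_s)$ sits inside $H$. The spectral sequence $E^2_{p,q}=H_p\bigl(p_{s+1,\dots,n}(G),\, H_q(K,\Q)\bigr)\Rightarrow H_{p+q}(G,\Q)$, together with the $\rm{FP}_s$ hypothesis on $G$ and the free-by-nilpotent structure of the $\Lambda_i$, would force $H_j\bigl(K\cap(F_1\times\cdots\times F_s),\Q\bigr)$ to be finite-dimensional for $j\le s$. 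Appealing to the Bridson--Howie--Miller--Short characterization of $\rm{FP}_s$ subdirect products of non-abelian free groups, this is precisely what is needed to conclude that $K\cap(F_1\times\cdots\times F_s)$ has finite index in $F_1\times\cdots\times F_s$, completing the induction.

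The principal obstacle I anticipate lies in this final step: rigorously translating the $\rm{FP}_s$-property of $G$, via the spectral sequence, into finiteness of the relevant homology of $K\cap(F_1\times\cdots\times F_s)$. This requires careful control of the $E^2$-page, which depends on whether $p_{s+1,\dots,n}(G)$ itself possesses favourable homological finiteness properties (it need not be of type $\rm{FP}_\infty$ \emph{a priori}), and on the fact --- ultimately rooted in the Bieri--Neumann--Strebel--Renz invariants of limit groups --- that the higher BNSR invariants of a non-abelian limit group collapse onto $\Sigma^1$, so that homological finiteness of normal subgroups of limit groups is governed by first-order character data.
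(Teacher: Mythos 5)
The paper does not prove this theorem: it is quoted verbatim from Kochloukova's article \cite{Koch}, so there is no in-paper argument to compare your sketch against. Evaluating the proposal on its own, there are genuine gaps.

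The most serious lies in the claim that the inductive $(s-1)$-tuple hypothesis ``pushes down to the nilpotent quotients and forces the image of $H$ in $(\Lambda_1/F_1)\times\cdots\times(\Lambda_s/F_s)$ to be of finite index.'' That implication is simply false without further input. Take $L=\{(a,b,c)\in\Z^3 : a+b+c=0\}$: every pair-projection is onto, yet $L$ has infinite index in $\Z^3$; the same phenomenon occurs in any product of non-trivial torsion-free nilpotent groups. So knowing that the image $\bar H\le(\Lambda_1/F_1)\times\cdots\times(\Lambda_s/F_s)$ has finite-index projections onto all $(s-1)$-tuples tells you nothing about the index of $\bar H$ itself. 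The $\FP_s$ hypothesis on $G$ has to be fed into this step, and you do not say how; without that, the reduction to the case of subdirect products of free groups (your next move) never gets off the ground.

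The second gap you flag yourself. The LHS spectral sequence for $1\to K\to G\to p_{s+1,\dots,n}(G)\to 1$ is a reasonable device, but extracting finite-dimensionality of $H_j(K\cap(F_1\times\cdots\times F_s),\Q)$ for $j\le s$ from $\FP_s(G)$ requires controlling $E^2$-terms over a group $p_{s+1,\dots,n}(G)$ whose finiteness properties are not a priori known, and the closing appeal to BNSR invariants of limit groups is a label for the hard part rather than an argument. The high-level skeleton --- induct on $s$, exploit the free-by-nilpotent structure from Theorem~\ref{t:desi1}, reduce to free groups, invoke BHMS-type homological criteria --- is broadly in the spirit of Kochloukova's approach, but the two pivotal implications you elide are exactly where the substance of her proof lives, and one of them as you have stated it is not true.
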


We shall also need Theorem A of \cite{BHMS1}.

\begin{theorem}\label{t:infty}
Every residually free group of type ${\rm{FP}}_\infty$ is a subgroup of finite index in a direct product of limit groups. 
\end{theorem}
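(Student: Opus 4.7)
The plan is to combine Theorem \ref{t:bhms}, the structural characterization of finitely presented residually-free groups, with Kochloukova's Theorem \ref{t:desi2}, which converts pairwise virtual surjectivity into full virtual surjectivity under a sufficiently strong finiteness hypothesis.

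Since $G$ is of type $\FP_\infty$ it is in particular finitely presented, so Theorem \ref{t:bhms} yields an embedding $G\hookrightarrow D := \Lambda_1\times\cdots\times\Lambda_n$ as a subdirect product with each pairwise projection $p_{ij}(G)<\Lambda_i\times\Lambda_j$ of finite index. Suppose first that every $\Lambda_i$ is non-abelian. Since $G$ is of type $\FP_n$, Theorem \ref{t:desi2} applied with $s=n$ forces the projection of $G$ onto every $n$-tuple of the $\Lambda_i$, and in particular onto the product of all of them, to have finite index. Hence $G$ has finite index in $D$, displaying it as a finite-index subgroup of a direct product of limit groups.

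The principal obstacle is the case where some factors $\Lambda_i$ are free-abelian, since Theorem \ref{t:desi2} is stated only for non-abelian limit groups. To handle this, I would gather all abelian factors into a single free-abelian summand, replace it by $A_0$, the image of $G$ under the projection onto this summand (still free-abelian, hence a limit group), and analyse the resulting extension
$$1 \to K \to G \to A_0 \to 1,$$
where $K$ is the kernel, which embeds in the product $B$ of the remaining non-abelian factors. Using the pairwise finite-index conditions (passing to a finite-index subgroup of $G$ if necessary), one shows that $K$ embeds subdirectly in $B$ with each pairwise projection of finite index. A Lyndon--Hochschild--Serre argument for the above extension, together with the fact that $A_0$ is torsion-free abelian and so of type $\F$, transfers $\FP_\infty$ from $G$ to $K$. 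Applying the non-abelian case to $K\hookrightarrow B$ yields that $K$ has finite index in $B$, and hence $G$ has finite index in $A_0\times B$, as required.

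The technically most demanding step is this abelian reduction: one must carefully verify that subdirectness, the pairwise-finite-index condition, and the $\FP_\infty$ property all descend to $K$ in $B$. Once this bookkeeping is completed, the core of the argument is a direct invocation of Theorems \ref{t:bhms} and \ref{t:desi2}.
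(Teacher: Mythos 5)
The paper does not prove this statement: Theorem~\ref{t:infty} is Theorem~A of \cite{BHMS1} and is simply cited. So your proposal is a genuinely independent proof attempt, and it is worth assessing on its own terms.

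Your argument for the case where every factor $\Lambda_i$ is non-abelian is correct and clean: apply Theorem~\ref{t:desi2} with $s=n$ to the embedding furnished by Theorem~\ref{t:bhms} (when $n=1$, $G$ is itself a limit group and there is nothing to prove). Note that Theorem~\ref{t:desi2} (Kochloukova 2010) postdates \cite{BHMS1} (2009), so this is a different and shorter route to the non-abelian case than the original.

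The reduction to that case when abelian factors are present has a serious gap. You set $A_0 = p_A(G)$, the image of $G$ in the product $A$ of abelian factors, let $K=\ker(G\to A_0)=G\cap B$, and assert that a Lyndon--Hochschild--Serre argument, together with $A_0$ being of type $\F$, transfers $\FP_\infty$ from $G$ to $K$. This inference is false. Stallings' group $S$ sits in the extension $1\to S\to F_2\times F_2\times F_2 \to \Z\to 1$, where the middle group is of type $\F$ (hence $\FP_\infty$) and $\Z$ is free abelian of type $\F$, yet $S$ is $\FP_2$ and not $\FP_3$. So finiteness properties do \emph{not} descend to the kernel of a map onto a free abelian group, even from $\FP_\infty$, and no LHS argument can give this for free. (A secondary, smaller inaccuracy: $K=G\cap B$ is not in general subdirect in $B$; its projections $p_j(K)$ can be proper finite-index subgroups of $\Lambda_j$. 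This is fixable by replacing each $\Lambda_j$ with $p_j(K)$, but the $\FP_\infty$ issue is not.)

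A more promising variant is to quotient by the \emph{centre} rather than take the kernel: since each abelian $\Lambda_i$ is central in $D$, the subgroup $Z_0 := G\cap A$ is central in $G$ and finitely generated free abelian, and $G/Z_0\cong p_B(G)$. Killing a finitely generated central subgroup does preserve $\FP_\infty$, so $p_B(G)$ is $\FP_\infty$ and is a full subdirect product of the non-abelian factors with finite-index pairwise projections; the non-abelian case then gives $[B:p_B(G)]<\infty$. But to conclude from here that $G$ sits as a finite-index subgroup in a product of limit groups one still needs to handle the case $[A:Z_0]=\infty$ (which can occur; $G$ is then a fibre product $p_B(G)\times_{A/Z_0}A$), and this requires a further argument -- for instance re-embedding $G$, since the original embedding into $A\times B$ no longer exhibits it as finite index. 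In short, the abelian reduction needs a different idea; the LHS step as stated does not close the gap.
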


\section{Bass-Serre Theory and Cellular Volume}\label{s:counting}

We assume that the reader is familiar with Bass-Serre theory as laid out in \cite{serre} and with the
more topological interpretation described in \cite{scott-wall}. We recall some of the basic features
of this theory and fix our notation. 

For us, a graph $X$ consists of two sets $V$ (the vertices) and $E$ (the unoriented edges, or 1-cells). There
are maps $\iota:E\to V$ and $\tau:E\to V$, and we allow $\iota(e)=\tau(e)$. We require that the
graph be connected in the sense that the equivalence relation
generated by $[\iota(e)\sim\tau(e) : e\in E]$ has only one equivalence class in $V$.
 There are two sets of groups:
the {\em{vertex groups}} $G_v$,
indexed by $V$, and the {\em{edge groups}} $G_e$, indexed by $E$, together with monomorphisms $\iota_e: G_e\to G_{\iota(e)}$ and $\tau_e:G_e\to
G_{\tau(e)}$. A graph of groups $\mathcal G$ consists of the above data. It is termed finite if $V\cup E$ is finite. 
Serre associates to these data a {\em fundamental group} $G=\pi_1\Gr$ and a left action of $G$ on a tree $\tilde\Gr$ so that (modulo some natural identifications)  the topological
quotient\footnote{for $H<G$, we write $H\qu\tilde{\Gr}$ to denote the quotient graph of groups, which records the isotropy groups and their inclusions as well as
the topological quotient} is $X$ and the pattern of isotropy groups and inclusions corresponds to the original edge and vertex groups $G_v,G_e<G$. 

If $B<G$ is a   
subgroup, then the graph of groups $B\qu \tilde{\Gr}$, which has fundamental
group $B$,   
has vertex
 groups $\{ G^g_v \cap B \mid v\in V,\ BgG_v\in  B
   \backslash G / G_v\}$  and  edge groups  
$\{ G^g_e  \cap B \mid e\in E,\ B g G_e \in B \backslash G /G_e \}.$ 
In particular, if $B$ is normal and of finite index, then for each $v\in V$ there
are $|G/G_vB|$ vertices in $B\qu \tilde{\Gr}$ where the vertex group is a conjugate of $G_v\cap B$,
and for each $e\in E$ there are $|G/G_eB|$ vertices where the edge group is a conjugate
of $G_e\cap B$. 

\subsection{A model for $K(G,1)$ when $G=\pi_1\Gr$}\label{ss:KG1}
Given explicit CW models for the classifying spaces $K(G_v,1)$ and $K(G_e,1)$, one can realise the monomorphisms
$\iota_e$ and $\tau_e$ by cellular maps, $I_e:K(G_e,1)\to K(G_v,1)$ and $T_e:K(G_e,1)\to K(G_v,1)$.
Attaching the ends of $K(G_e,1)\times [0,1]$ to $K(G_{\iota(e)},1)\coprod K(G_{\tau(e)},1)$ by means of these
maps, for each $e\in E$, we obtain an explicit CW model for $K(G,1)$.

Note that for each $k\ge 1$, the set of $k$-cells in $K(G,1)$ is in bijection with the union of the
sets of $k$-cells in $K(G_v,1)\ [v\in V]$ together with the $(k-1)$-cells in $K(G_e,1)\ [e\in E]$, where an open $(k-1)$-cell $c$ in 
$K(G_e,1)$ contributes the open $k$-cell $c\times (0,1)$ to $K(G,1)$.  We single this
simple observation out as a lemma because it is central to what follows. In order to
state this lemma we need the following terminology.

\begin{definition} Let $G$ be a group. A sequence of non-negative integers $(r_k)_{k\ge 1}$
is a {\em{volume vector}} for $G$ if there is a classifying space $K(G,1)$ that,
for all $k\in\N$, has
exactly $r_k$ open $k$-cells.

When we are discussing several groups, we shall abuse notation by writing the
entries of such a vector as $r_k(G)$ but this is {\em{not}} meant to imply that
$r_k(G)$ is an invariant of $G$. 

Note that if $G$ is type $F_\infty$ then $\vol_k(G)$ is the infimum of $r_k(G)$ over all volume vectors for $G$.
\end{definition}

\begin{lemma}\label{lemma1}
 Let $\Gr$ be a finite graph of groups and let $G=\pi_1\Gr$.  With the notation established above, suppose that $(r_k(G_v))$ is a volume vector for $G_v\ (v\in V)$ and 
$(r_k(G_e))$ is a volume vector for $G_e\ (e\in E)$. For $k\ge 1$, let
$$r_k(G):= \sum_{v\in V}r_k(G_v) + \sum_{e\in E} r_{k-1}(G_e).$$
Then $(r_k(G))$ is a volume vector for $G$.
\end{lemma}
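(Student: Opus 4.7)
The plan is essentially to make rigorous the construction already sketched in Section~\ref{ss:KG1}: produce an explicit classifying space $Y$ for $G=\pi_1\mathcal{G}$ as a graph of spaces whose building blocks are the given CW models for the $K(G_v,1)$ and $K(G_e,1)$, and then count its cells directly.

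First I would fix CW models $X_v$ for $K(G_v,1)$ and $X_e$ for $K(G_e,1)$ realizing the given volume vectors $(r_k(G_v))$ and $(r_k(G_e))$. Since maps between Eilenberg--MacLane spaces are determined up to homotopy by their effect on fundamental groups, the monomorphisms $\iota_e\colon G_e\hookrightarrow G_{\iota(e)}$ and $\tau_e\colon G_e\hookrightarrow G_{\tau(e)}$ are induced by continuous maps $X_e\to X_{\iota(e)}$ and $X_e\to X_{\tau(e)}$; by cellular approximation I may take these maps to be cellular, call them $I_e$ and $T_e$. I would then form
\[
Y \;=\; \Bigl(\bigsqcup_{v\in V} X_v\Bigr)\ \sqcup\ \Bigl(\bigsqcup_{e\in E} X_e\times[0,1]\Bigr)\ \Big/\sim ,
\]
where on $X_e\times[0,1]$ the end $X_e\times\{0\}$ is identified with its image under $I_e$ in $X_{\iota(e)}$ and $X_e\times\{1\}$ with its image under $T_e$ in $X_{\tau(e)}$. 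Because $I_e$ and $T_e$ are cellular, $Y$ inherits a CW structure in which the open cells are exactly of two kinds: (i) the open cells of the vertex spaces $X_v$, and (ii) the ``cylinder'' cells $c\times(0,1)$ for each open cell $c$ of each edge space $X_e$. In particular, for $k\ge 1$, the open $k$-cells of $Y$ are in bijection with the open $k$-cells of the $X_v$ together with the open $(k-1)$-cells of the $X_e$, giving the count $\sum_v r_k(G_v)+\sum_e r_{k-1}(G_e)$ asserted in the lemma.

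It remains to check that $Y$ is a $K(G,1)$. Iterated application of Seifert--van Kampen across the one-parameter pieces $X_e\times[0,1]$, in the pattern dictated by $\mathcal{G}$, identifies $\pi_1 Y$ with $\pi_1\mathcal{G}=G$. The universal cover $\widetilde Y$ is the ``tree of spaces'' associated to $\mathcal{G}$: it is built from copies of the universal covers $\widetilde X_v$ and $\widetilde X_e\times[0,1]$ glued along the Bass--Serre tree $\widetilde\mathcal{G}$, and since each $\widetilde X_v$ and each $\widetilde X_e$ is contractible (the $X_v$, $X_e$ being $K(\pi,1)$'s), a standard Mayer--Vietoris / nerve argument, or equivalently the discussion in \cite{scott-wall}, shows that $\widetilde Y$ deformation retracts onto the tree $\widetilde\mathcal{G}$ and is therefore contractible. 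Hence $Y$ is aspherical, so $Y$ is a $K(G,1)$, and the cell count above exhibits $(r_k(G))$ as a volume vector for $G$.

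There is no serious obstacle: the only point requiring care is the assertion that $\widetilde Y$ is contractible, which is the content of the standard graph-of-spaces model and is where Bass--Serre theory is invoked; everything else is CW bookkeeping.
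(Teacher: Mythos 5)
Your proof is correct and takes essentially the same approach as the paper, which presents this lemma as a direct consequence of the graph-of-spaces construction described just before it (paragraph \ref{ss:KG1}) without writing out the details. You have simply made explicit the cell count and the verification that the total space is aspherical via the tree-of-spaces model for the universal cover, which is exactly what the paper has in mind.
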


We are interested in what happens to $\vol_k(G)$ as we pass to subgroups of increasing index, and we shall do this by constructing suitably-controlled volume vectors.
The most obvious way of getting
 models $K(B_n,1)$ for subgroups $B_n<G$ is to simply take the corresponding covering spaces of a fixed model for $G$. However,
 this model is not efficient enough for our purposes: in general it has too many cells. A simple example that illustrates this is the
 case $G=\Z^r$: the number of $k$-cells in the cover corresponding to $B_n<G$ goes to infinity as $[G:B_n]\to\infty$, but $\vol_k(B_n)$ 
 remains constant since $B_n\cong\Z^r$ for all $n$.
 
 To avoid the phenomenon illustrated by this example, 
 given a finite-index subgroup $B<G$ we first pass to the covering graph-of-groups $B\qu \widetilde\Gr$, where
 $\widetilde\Gr$ is the universal covering (tree) for $\Gr$. 
  In the graph of groups $B\qu \widetilde\Gr$, for each $v\in V$ the vertices lying above
 $v$ are indexed by the double cosets $B\backslash G/G_v$, and the vertex group at the vertex indexed by $BgG_v$ is
 $B^g\cap G_v$. Likewise, the edges of $B\qu \widetilde\Gr$ are indexed by $\coprod_{e\in E} B\backslash G/G_e$
 and the edge groups have the form $B^g\cap G_e$. We now assemble $K(B,1)$ from classifying spaces for the
 edge and vertex groups, as described in  paragraph (\ref{ss:KG1}).  
 If $B$ is normal, then we take the same classifying space above each of
 the vertices indexed by a fixed vertex or edge of $\Gr$.

 We are interested only in the case where $B$ is normal. In that case,
 the above discussion shows that the vertices of the finite graph of groups 
  $ B\qu \widetilde\Gr$ are index by cosets $G/BG_v$, and the edges by $G/BG_e$,
  and from   Lemma \ref{lemma1} we obtain  the following count:
 
 \begin{prop}\label{p:covering} 
Let $G$ be the fundamental group of a finite graph of groups with
 vertex groups $G_v \ (v\in V)$ and edge groups $G_e\ (e\in E)$.
 Let $B<G$ be a normal subgroup of finite index. Suppose that
 volume vectors
 $(r_k(B\cap G_v))$ and $(r_k(B\cap G_e))$ are given and define
 $$
 r_k(B):= \sum_{v\in V}{[G:BG_v]}\, r_k(B\cap G_v) + \sum_{e\in E} {[G:BG_e]}\, r_{k-1}(B\cap G_e).
 $$
 Then $(r_k(B))$ is a volume vector for $B$.
 \end{prop}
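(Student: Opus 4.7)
The plan is to apply Lemma \ref{lemma1} to the graph of groups $B\qu\widetilde{\Gr}$, since this is a finite graph of groups whose fundamental group is $B$. So the work lies entirely in identifying the edge/vertex set of this quotient graph of groups, identifying the isotropy groups, and producing controlled volume vectors for each of them.

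First, I would recall from the Bass--Serre setup summarised just before the statement that the vertices of $B\qu\widetilde{\Gr}$ lying above a given $v\in V$ are indexed by the double cosets $B\backslash G/G_v$, with corresponding vertex group $B^g\cap G_v$ for the vertex indexed by $BgG_v$; and similarly for edges. Because $B$ is normal in $G$, two observations simplify this considerably: first, $B^g=B$, so every vertex above $v$ has vertex group exactly $B\cap G_v$ (and every edge above $e$ has edge group exactly $B\cap G_e$); second, the product $BG_v$ is a subgroup of $G$ containing $B$, and each double coset $BgG_v$ coincides with the single coset $(BG_v)g$, so the set $B\backslash G/G_v$ is in bijection with $G/BG_v$ and hence has cardinality $[G\colon BG_v]$. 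The analogous count gives $[G\colon BG_e]$ edges above $e$.

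Next, to feed Lemma \ref{lemma1} I need a volume vector at each vertex and edge of $B\qu\widetilde{\Gr}$. The hypothesis supplies a single volume vector $(r_k(B\cap G_v))$ for the group $B\cap G_v$, and since all $[G\colon BG_v]$ vertices above $v$ carry this same group, I choose the \emph{same} CW model $K(B\cap G_v,1)$ at each of them; likewise for edges. Lemma \ref{lemma1} then assembles a classifying space for $B=\pi_1(B\qu\widetilde{\Gr})$ by gluing mapping cylinders, and the resulting $k$-cell count is the sum over all vertices of $B\qu\widetilde{\Gr}$ of $r_k(B\cap G_v)$ plus the sum over all edges of $r_{k-1}(B\cap G_e)$. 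Grouping the vertices and edges according to which $v\in V$ or $e\in E$ they lie above, the two double-coset counts above turn each group of terms into a multiple of $[G\colon BG_v]$ or $[G\colon BG_e]$, yielding exactly the formula for $r_k(B)$ in the statement.

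There is no serious obstacle here: the proposition is essentially a bookkeeping consequence of Lemma \ref{lemma1} together with the double-coset description of covering graphs of groups. The one place where care is needed is the use of normality to see both that the double cosets collapse to ordinary cosets (giving the clean indices $[G\colon BG_v]$ and $[G\colon BG_e]$) and that the vertex/edge groups are conjugation-invariantly equal to $B\cap G_v$ and $B\cap G_e$, so that one genuinely can reuse a single classifying space above each $v$ and each $e$.
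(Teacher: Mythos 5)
Your proof is correct and follows essentially the same route as the paper: pass to the covering graph of groups $B\qu\widetilde{\Gr}$, use normality of $B$ both to identify the double cosets $B\backslash G/G_v$ with cosets of $BG_v$ and to see that the isotropy groups over a fixed $v$ or $e$ are all literally equal to $B\cap G_v$ or $B\cap G_e$, so a single classifying space can be reused, then apply Lemma~\ref{lemma1}. (One tiny slip: normality of $B$ gives $BgG_v = g(BG_v)$, a left coset of $BG_v$ rather than the right coset $(BG_v)g$ you wrote, but this does not affect the cardinality $[G:BG_v]$.)
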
 
 
 It is clear from the above formula that we will need to count cosets carefully. The
 following trivial observation is useful in this regard.
 
 \begin{remark}\label{l:count}
 Let $G$ be a group, let $H<G$ be a subgroup and let $B\norm G$ be a normal subgroup
 of finite index. Then
 $
 \frac{[G : BH]}{[G:B]} =   \frac{1}{[H:B\cap H]}.
 $
 \end{remark}
  
\section{Volume gradient, slow groups and hierarchies}\label{s:slow}

\begin{definition} A group $G$ of type $\rm{F}$ is {\em{slow above dimension $1$}} if it is residually finite and for every chain of finite-index normal subgroups
$(B_n)$ with $\bigcap_n B_n =\{1\}$, there exist volume vectors $(r_k(B_n))_k$ with only finitely
many non-zero entries, so that
$$
\limn \frac{r_k(B_n)}{[G:B_n]}=0$$
for all $k\ge 2$.

$G$ is {\em{slow}} if it satisfies the additional requirement that the limit exists and is zero for $k=1$ as well.
\end{definition}
 
The following theorem was stated in the introduction as Theorem \ref{t:Aslow}.

\begin{theorem}\label{t:slow} If a residually finite group $G$ of type $\F$ is slow above dimension $1$, then with
respect to every exhausting normal chain 
 $(B_n)$,
\begin{enumerate}
\item Rank gradient:
$$\RG(G, (B_n)) = 
\limn\frac{d(B_n)}{[G:B_n]}=-\chi(G),$$
\item Deficiency gradient:
$$\DG(G, (B_n)) = 
\limn \frac{\deff(B_n)}{[G:B_n]}=  \chi(G).$$ 
\end{enumerate}
\end{theorem}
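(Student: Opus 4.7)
My plan is to derive both gradients from the Euler characteristic of $G$ by combining slowness (to kill higher-dimensional terms) with standard bounds relating $d(B_n)$ and $\deff(B_n)$ to cellular cell counts and to Betti numbers. Since $G$ is of type $\F$, the integer $\chi(G)$ is well-defined, each $B_n$ is also of type $\F$, and $\chi(B_n)=[G:B_n]\chi(G)$. First I would use slowness above dimension $1$ to select, for each $n$, a finite $K(B_n,1)$ with cell counts $r_k(B_n)$ satisfying $r_0(B_n)=1$ and $r_k(B_n)/[G:B_n]\to 0$ for all $k\ge 2$; if the model furnished by slowness has $r_0>1$, I collapse a spanning tree, which does not affect any limit of interest. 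The Euler--Poincar\'e formula then gives
\[
\chi(G) = \frac{1}{[G:B_n]} - \frac{r_1(B_n)}{[G:B_n]} + \sum_{k\ge 2}(-1)^k \frac{r_k(B_n)}{[G:B_n]},
\]
and passing to the limit yields $\lim_n r_1(B_n)/[G:B_n]=-\chi(G)$.

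For the rank gradient, the $1$-skeleton of the chosen $K(B_n,1)$ supplies a generating set, so $d(B_n)\le r_1(B_n)$, giving the upper bound $\limsup d(B_n)/[G:B_n]\le -\chi(G)$. For the matching lower bound I will use $d(B_n)\ge\dim H_1(B_n,\BQ)$. Cellular homology gives $\dim H_k(B_n,\BQ)\le r_k(B_n)$, so the ratios $\dim H_k(B_n,\BQ)/[G:B_n]$ tend to $0$ for $k\ge 2$; the identity $\chi(G)=\sum_k(-1)^k\dim H_k(B_n,\BQ)/[G:B_n]$, valid by Euler--Poincar\'e for the finite $K(B_n,1)$, then forces $\dim H_1(B_n,\BQ)/[G:B_n]\to -\chi(G)$, and hence $\liminf d(B_n)/[G:B_n]\ge -\chi(G)$.

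For the deficiency gradient, the $2$-skeleton of the chosen $K(B_n,1)$ is a presentation of $B_n$ with $r_1(B_n)$ generators and $r_2(B_n)$ relators, so $\deff(B_n)\le r_2(B_n)-r_1(B_n)$, whose quotient by $[G:B_n]$ tends to $\chi(G)$. For the matching lower bound I would invoke Hopf: any finite presentation $\langle X\mid R\rangle$ of $B_n$ has $2$-complex $P$ with $\chi(P)=1-|X|+|R|$, and the Hopf exact sequence provides a surjection $H_2(P,\BQ)\twoheadrightarrow H_2(B_n,\BQ)$, giving $\chi(P)\ge 1-\dim H_1(B_n,\BQ)+\dim H_2(B_n,\BQ)$. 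Rearranging yields $\deff(B_n)\ge \dim H_2(B_n,\BQ)-\dim H_1(B_n,\BQ)$, and dividing by $[G:B_n]$ produces $\liminf \deff(B_n)/[G:B_n]\ge \chi(G)$.

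There is no essential obstacle here: the whole argument is an accounting exercise that leverages slowness to suppress the higher-dimensional terms in Euler--Poincar\'e, then uses only the elementary bounds $d\le r_1$, $\deff\le r_2-r_1$, $d\ge b_1$, and the Hopf bound $\deff\ge b_2-b_1$. The closest thing to a subtlety is the double use of the volume vectors: they must simultaneously supply the geometric upper bounds on $d(B_n)$ and $\deff(B_n)$ via $r_1$ and $r_2$, and the homological lower bounds on $\dim H_k(B_n,\BQ)$ via cellular inequalities. Since the same vectors serve both roles, no conflict arises.
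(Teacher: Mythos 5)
Your proof is correct and follows essentially the same route as the paper: use slowness to control the higher-dimensional terms in the Euler--Poincar\'e formula (this is the paper's Lemma 4.3), then sandwich $d(B_n)$ between $\dim H_1(B_n,\BQ)$ and $r_1(B_n)$, and $\deff(B_n)$ between $\dim H_2(B_n,\BQ)-\dim H_1(B_n,\BQ)$ and $r_2(B_n)-r_1(B_n)$. The only small divergence is cosmetic: you normalize $r_0=1$ by collapsing a tree rather than carrying $r_0(B_n)-r_1(B_n)$ through the estimates, and you derive the deficiency lower bound directly from the Hopf surjection $H_2(P;\BQ)\twoheadrightarrow H_2(B_n;\BQ)$ rather than citing Bridson--Tweedale's Lemma 2 (which gives the slightly sharper $\deff(B_n)\ge d(H_2(B_n,\Z))-\mathrm{rk}_{\BQ}H_1(B_n,\Z)$, but reduces to the same rational inequality here).
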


\begin{lemma}\label{lem2}  Let $G$ be a residually-finite group of type $\rm{F}$ with an exhausting normal
chain $(B_n)$. Suppose that
$G$ is slow above dimension $1$ and choose volume vectors $(r_k(B_n))_k$ as in the
definition. Then 
$$
\limn  \frac{r_0(B_n)- r_1(B_n)}{[G:B_n]} = \chi(G),
$$
and for every field $K$
$$
\limn  \frac{ H_1(B_n, K)}{[G:B_n]} = -\chi(G). 
$$
\end{lemma}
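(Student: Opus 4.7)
My plan is to combine two ingredients: the multiplicativity of the Euler characteristic, $\chi(B_n) = [G:B_n]\,\chi(G)$, which is forced by $G$ (and hence each $B_n$) being of type $\rm F$; and the cellular identity $\chi(B_n) = \sum_k (-1)^k r_k(B_n)$ that holds for any volume vector. Slowness above dimension $1$ will then be used to kill the contributions of $k \geq 2$. Fix the vectors $(r_k(B_n))_{k\ge 0}$ provided by the slowness hypothesis. Since $G$ is of type $\rm F$ it has finite geometric dimension $d$, and each $B_n$ inherits this bound; I will take the $r_k(B_n)$ to have support in $[0,d]$ uniformly in $n$. Granted that, dividing $\chi(B_n) = \sum_{k=0}^{d} (-1)^k r_k(B_n)$ by $[G:B_n]$ and isolating the low-degree part gives
$$
\frac{r_0(B_n)-r_1(B_n)}{[G:B_n]} \;=\; \chi(G) \,-\, \sum_{k=2}^{d} (-1)^k \frac{r_k(B_n)}{[G:B_n]},
$$
whose right-hand tail is a bounded sum of terms each going to $0$ by slowness. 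This yields the first identity.

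For the homological statement the extra ingredient is the elementary bound $\dim_K H_k(B_n,K) \leq r_k(B_n)$, valid since cellular $H_k$ is a subquotient of the $k$-chain space $K^{r_k}$. Combined with slowness this gives $\dim H_k(B_n,K)/[G:B_n] \to 0$ for every $k \geq 2$. Moreover $\dim H_0(B_n,K) = 1$ because $B_n$ is connected, and $[G:B_n] \to \infty$ (as $G$ is infinite with $\bigcap_n B_n = 1$), so $\dim H_0(B_n,K)/[G:B_n] \to 0$ as well. Dividing the identity $[G:B_n]\,\chi(G) = \sum_{k=0}^{d}(-1)^k \dim H_k(B_n,K)$ by $[G:B_n]$ and isolating the $k=1$ term produces $\dim H_1(B_n,K)/[G:B_n] \to -\chi(G)$ in the limit.

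The only delicate point is arranging the uniform bound on the supports of the witnessing volume vectors: without it, the tail sum in the first display could in principle contain an unbounded number of non-negligible terms, and pointwise slowness in each $k$ would not suffice to force the sum to zero. The type $\rm F$ hypothesis bounds the geometric dimension of every $B_n$ by $d$, so one can always replace the given witnesses by vectors coming from a $d$-dimensional $K(B_n,1)$; alternatively, in every application in this paper the naturally occurring volume vectors (e.g.\ those supplied by Proposition \ref{p:covering}) already have support in $[0,d]$, so the issue does not arise in practice.
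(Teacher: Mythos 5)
Your argument is the same as the paper's: multiplicativity of $\chi$ for finite-index subgroups, the Euler characteristic computed from a volume vector, and slowness (together with $\dim H_k\le r_k$) to kill the degree $\ge 2$ contributions. The paper's proof is exactly this and spends no words on the point you flag.

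On that point: you are right that pointwise slowness in each $k$ does not \emph{a priori} control an alternating sum over an unbounded range of $k$, so it is good that you noticed this. But your proposed repair is shakier than it looks: replacing the given volume vectors by ones coming from a $d$-dimensional $K(B_n,1)$ need not preserve the property $r_k(B_n)/[G\!:\!B_n]\to 0$, since there is no reason the low-dimensional cell counts of a dimension-reduced model are bounded by the old ones. Fortunately the worry can be dispatched without any replacement. Since $G$ has a finite $d$-dimensional $K(G,1)$, every $B_n$ has cohomological dimension $\le d$, so $H_k(B_n,K)=0$ for $k>d$; hence the cellular chain complex of any finite $K(B_n,1)$ is exact above degree $d$, and a telescoping computation shows
$$\Bigl|\,\sum_{k>d}(-1)^k r_k(B_n)\Bigr| = \mathrm{rank}\,\bigl(\partial_{d+1}\colon C_{d+1}\to C_d\bigr)\ \le\ r_{d+1}(B_n).$$
Thus the entire tail above degree $d$ is dominated by the single quantity $r_{d+1}(B_n)$, and slowness applied to that one index finishes the job. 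Similarly, for the homological identity the sum $\sum_k(-1)^k\dim H_k(B_n,K)$ automatically has at most $d+1$ terms because $H_k(B_n,K)=0$ for $k>d$, so no issue arises there. With this clarification your proof is complete and coincides with the paper's.
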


\begin{proof} As $B_n$ is of type $\rm{F}$, it has an Euler characteristic, which may be calculated from 
a finite $K(B_n,1)$ with $r_k(B_n)$ cells of dimension $k$,
$$\chi(B_n) = r_0(B_n) -r_1(B_n) + r_2(B_n) -r_3(B_n) +\dots + (-1)^{k_0}r_{k_0}(B_n).
$$
Euler characteristic is multiplicative in the sense that $\chi(B_n) = [G:B_n]\,\chi(G)$. 
To obtain the first equality, we divide by $[G:B_n]$ and let $n\to\infty$. 

Towards the second equality, observe that 
since the homology of $B_n$ can be computed from the cellular chain complex of $K(B_n,1)$,
we have $r_k(B_n) \ge \dim H_k(B_n, K)$, and therefore 
$\lim_n \dim H_k(B_n, K)/[G:B_n] =0$ for $k\ge 2$ (by slowness). Thus the second equality
can be obtained by calculating $\chi(B_n)$ as the alternating sum of betti numbers (omitting the
coefficients $K$)
$$[G:B_n]\chi(G)=\chi(B_n) = 1 -\dim H_1(B_n)  +  \dots + (-1)^{k_0}\dim H_{k_0}(B_n),
$$
then dividing through by $[G:B_n]$ and taking the limit.
\end{proof} 

\noindent{\bf{Proof of Theorem \ref{t:slow}.}} 
First we prove (1).
Let $r_k(B_n)$ be as in Lemma \ref{lem2}. Now, $B_n$ is a quotient 
of the fundamental group of the 1-skeleton of $K(B_n,1)$, which is a free group of rank $r_1(B_n) - r_0(B_n) +1$, so
$$
\dim H_1(B_n,\Q)\le d(B_n) \le r_1(B_n) - r_0(B_n) +1.
$$
If we divide by $[G:B_n]$ and take the limit, both sides will converge to $-\chi(G)$, by Lemma \ref{lem2}.
\medskip

Turning to the proof of (2), we remind the reader that  the {\em{deficiency}} $\deff(\G)$ is the infimum of $|R | - |X|$ over all possible finite presentations $\langle X \mid R \rangle$ of $\G$.

From the 2-skeleton of any $K(B_n,1)$ we get a group presentation with $r_1(B_n)-r_0(B_n)+1$ generators and
$r_2(B_n)$ relators, so 
$$
r_2(B_n) - r_1(B_n) + r_0(B_n) -1 \ge \deff(B_n).
$$
And from \cite[Lemma~2]{BT} we have  
$$
\begin{aligned}
\deff(B_n) &\geq d(H_2(B_n, \mathbb{Z})) - {\rm{rk}_\Q\ } H_1(B_n, \mathbb{Z})\\ 
&=d(H_2(B_n, \mathbb{Z}))  - \dim H_1(B_n, \mathbb{Q})\\
& \geq \dim H_2(B_n, \mathbb{Q}) - \dim H_1(B_n, \mathbb{Q}).\\
\end{aligned}
$$ 
Thus
$$
r_2(B_n) - r_1(B_n) + r_0(B_n) -1 \ge \deff(B_n) \ge \dim H_2(B_n, \mathbb{Q}) - \dim H_1(B_n, \mathbb{Q}).
$$ 
We divide by $[G:B_n]$ and let $n$ go to infinity. Since $G$ is slow above dimension $1$,  both
$r_2(B_n)/[G:B_n]$ and  $\dim H_2(B_n)/[G:B_n]$ tend to zero, and by  Lemma \ref{lem2} the limit of what remains on
each side tends to $\chi(G)$.
\qed

\begin{examples} {\em Easy examples of slow groups include 
finitely generated torsion-free nilpotent groups. The trivial group is slow. 
Free groups are slow above dimension $1$, trivially.
Surface groups are slow above dimension $1$ because a finite-index subgroup of a surface group
is again a surface group, so for any finite-index subgroup $B$ we have the volume vector $1,d(B),1,0,\dots$}

{\em
A far greater range of examples is provided by the following construction. }
\end{examples}
\def\Sr{\mathcal{S}}

\subsection{Hierarchies} Given a class $\C_0$ of residually-finite groups, we define $\Sr(\C_0) = \bigcup_n\C_n$ inductively
by decreeing that $\C_{n+1}$ consist of all residually-finite groups that can be expressed as the fundamental
group of a finite graph of groups with edge-groups that are slow and vertex-groups that lie in $\C_n$. The
following
proposition  tells us that if the groups in $\C_0$ are slow above dimension $1$, then the groups in 
$\Sr(\C_0)$ are as well.
Note that this includes the
statement that free products of groups that are slow above dimension $1$ are again slow above dimension $1$.

\begin{prop}\label{p:good}
 If a residually-finite group $G$ is the fundamental group of a finite graph of groups where all of the edge-groups are
slow and all of the vertex-groups are slow above dimension $1$, then $G$ is slow above dimension $1$.
\end{prop}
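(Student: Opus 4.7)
The plan is to apply Proposition~\ref{p:covering} to an arbitrary exhausting normal chain $(B_n)$ in $G$, choosing efficient models for $K(B_n\cap G_v,1)$ and $K(B_n\cap G_e,1)$ via the slowness hypotheses on the vertex and edge groups. First observe that $G$ is of type $\F$: a finite $K(G,1)$ is built by attaching mapping cylinders of cellular maps $K(G_e,1)\to K(G_v,1)$ onto the finite $K(G_v,1)$, which exist because vertex and edge groups are of type $\F$ (as a consequence of being slow, respectively slow above dimension $1$). Residual finiteness of $G$ is part of the hypothesis, so exhausting normal chains exist.

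Next, for any vertex $v$ and edge $e$, the chains $(B_n\cap G_v)$ in $G_v$ and $(B_n\cap G_e)$ in $G_e$ are exhausting normal chains of finite index: normality and the trivial total intersection are inherited from $(B_n)$, while finite index follows from $[G_v:B_n\cap G_v]\le [G:B_n]$ and similarly for $G_e$. The slowness-above-dimension-$1$ hypothesis on $G_v$ supplies finitely-supported volume vectors $(r_k(B_n\cap G_v))_{k\ge 0}$ with
\[
\lim_{n\to\infty}\frac{r_k(B_n\cap G_v)}{[G_v:B_n\cap G_v]}=0\quad\text{for all }k\ge 2,
\]
and the slowness hypothesis on $G_e$ supplies finitely-supported volume vectors $(r_k(B_n\cap G_e))_{k\ge 0}$ with the analogous limit zero for all $k\ge 1$.

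Feeding these into Proposition~\ref{p:covering} produces a finitely-supported volume vector for $B_n$, namely
\[
r_k(B_n)=\sum_{v\in V}[G:B_nG_v]\,r_k(B_n\cap G_v)+\sum_{e\in E}[G:B_nG_e]\,r_{k-1}(B_n\cap G_e).
\]
Dividing by $[G:B_n]$ and invoking Remark~\ref{l:count} to rewrite each ratio $[G:B_nH]/[G:B_n]=1/[H:B_n\cap H]$ gives
\[
\frac{r_k(B_n)}{[G:B_n]}=\sum_{v\in V}\frac{r_k(B_n\cap G_v)}{[G_v:B_n\cap G_v]}+\sum_{e\in E}\frac{r_{k-1}(B_n\cap G_e)}{[G_e:B_n\cap G_e]}.
\]
For $k\ge 2$ each vertex term tends to $0$ by the slowness-above-dimension-$1$ of $G_v$, and each edge term, which involves $r_{k-1}$ with $k-1\ge 1$, tends to $0$ by the slowness of $G_e$. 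Since $V$ and $E$ are finite, the whole right-hand side tends to $0$, so $G$ is slow above dimension~$1$.

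The only subtle point is the asymmetry of the hypotheses: the edge-group contribution to $r_2(B_n)$ involves $r_1(B_n\cap G_e)$, so in order to kill the $k=2$ term we must control the $1$-cells of the edge-group classifying spaces, which is precisely why edge groups are required to be fully slow rather than merely slow above dimension~$1$. Beyond this bookkeeping, the argument is a direct combination of the graph-of-spaces volume formula with the coset-counting identity in Remark~\ref{l:count}, and presents no serious obstacle.
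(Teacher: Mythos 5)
Your proposal is correct and follows essentially the same argument as the paper: build $K(B_n,1)$ from efficient vertex- and edge-space models via Proposition~\ref{p:covering}, divide the resulting volume vector by $[G:B_n]$, rewrite the coefficients using Remark~\ref{l:count}, and take the limit term by term. Your extra remarks (that $G$ is of type $\F$, and that the chains $(B_n\cap G_v)$ and $(B_n\cap G_e)$ are themselves exhausting normal chains) are points the paper leaves implicit but are correct and worth noting.
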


\begin{proof} Given a sequence of finite index normal subgroups $(B_n)$ exhausting $G$, we build
classifying spaces $K(B_n,1)$ as in Proposition \ref{p:covering} so that they have
volume vectors $(r_k(B_n))$ satisfying
\begin{equation}\label{sums}
 r_k(B_n)= \sum_{v\in V}{[G:B_nG_v]} r_k(B_n\cap G_v) + \sum_{e\in E} {[G:B_nG_e]}r_{k-1}(B_n\cap G_e),
\end{equation}
 where $G_v$ and $G_e$ are the vertex and edge groups in the given decomposition of $G$. By hypothesis,
 the $G_v$ are slow above dimension $1$, so for all $k\ge 2$ we have
 $$
\limn  \frac{r_k(B_n\cap G_v)}{[G_v: B_n\cap G_v]} = 0.
 $$
 But, as we noted in Remark \ref{l:count},  $[G_v: B_n\cap G_v] = [G:B_n]/[G:B_nG_v]$, so dividing
 equality (\ref{sums}) by $[G:B_n]$, the first sum   becomes
 $$
 \sum_{v\in V}\frac{r_k(B_n\cap G_v)}{[G_v: B_n\cap G_v]},
 $$ 
 which converges to $0$ as $n\to\infty$. Likewise, since $G_e$ is assumed to be slow (in all dimensions
 including $1$), we have
 $$
 \frac{1}{[G:B_n]}\sum_{e\in E} {[G:B_nG_e]}r_{k-1}(B_n\cap G_e) = 
 \sum_{e\in E} \frac{r_{k-1}(B_n\cap G_e)}{[G_e:B_n\cap G_e]} \to 0$$
 as $n\to\infty$. Thus $\frac{r_k(B_n)}{[G : B_n]} \to 0$ as $n\to\infty$ for all $k\ge 2$, as required.
 \end{proof} 
 
 We saw in Example 4.4 that all finitely generated free groups, surface groups,
and free-abelian groups of finite rank are slow above dimension 1, and
 Corollary \ref{allInC} tells us that if $\C_0$ contains these groups then $\Sr(\C_0)$ contains all {\em limit groups}. 
Thus Proposition \ref{p:good} implies:

\begin{theorem} \label{t:slowLimit} All limit groups are slow above dimension $1$.
\end{theorem}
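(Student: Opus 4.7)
The plan is to apply Corollary~\ref{allInC} to the hierarchy $\Sr(\C_0)$ built from a carefully chosen base class. I take $\C_0$ to be the class of finitely generated free groups, closed surface groups of negative Euler characteristic, and finitely generated free abelian groups. By Example 4.4 these are all slow above dimension $1$. I also need to know that the cyclic groups that will serve as edge stabilizers, namely the trivial group and $\mathbb{Z}$, are slow in the strong sense (including dimension $1$); the trivial group is vacuous, and $\mathbb{Z}$ works because every finite-index subgroup of $\mathbb{Z}$ is again $\mathbb{Z}$, so the circle provides a classifying space with $r_1 = 1$ while $[\mathbb{Z}:B_n]\to\infty$.

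Next I would establish, by induction on $n$, that every group in $\C_n$ is slow above dimension $1$. The base case $n = 0$ is the observation above. For the inductive step, a group $G \in \C_{n+1}$ is by definition residually finite and realized as $\pi_1\Gr$ for a finite graph of groups with slow edge groups and vertex groups in $\C_n$; by the inductive hypothesis those vertex groups are slow above dimension $1$, and Proposition~\ref{p:good} then delivers that $G$ is slow above dimension $1$. Consequently every group in $\Sr(\C_0) = \bigcup_n \C_n$ is slow above dimension $1$.

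Finally I would check that $\Sr(\C_0)$ fulfils the hypotheses of Corollary~\ref{allInC}. It contains $\C_0$, hence all finitely generated free abelian and surface groups. For closure, any amalgamated free product $A \ast_C B$ or HNN extension with $C$ cyclic is the fundamental group of a finite graph of groups with a single cyclic (hence slow) edge group, so whenever $A, B$ lie in some $\C_n$ and the resulting amalgam is residually finite, the amalgam lies in $\C_{n+1} \subseteq \Sr(\C_0)$. Corollary~\ref{allInC} therefore places every limit group in $\Sr(\C_0)$, and the induction above shows each is slow above dimension $1$.

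The only delicate point is the residual-finiteness bookkeeping: one must know that each intermediate group appearing when one constructs a limit group by iterated cyclic amalgams and HNN extensions is itself residually finite, so as not to fall out of the class $\Sr(\C_0)$. This is handled by the structural description of limit groups in Lemma~2.1 together with the well-known fact that every limit group is residually finite; the subgroups of lower-height limit groups appearing as vertex groups are themselves limit groups, and so residually finite by induction on height. Once this is noted, the argument is exactly the two-line deduction suggested by the paragraph preceding the theorem.
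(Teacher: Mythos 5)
Your argument is correct and follows the paper's own route exactly: take $\mathcal C_0$ to consist of free, surface, and free-abelian groups (slow above dimension~$1$ by Example~4.4), use Proposition~\ref{p:good} inductively to show every group in the hierarchy $\mathcal{S}(\mathcal C_0)$ is slow above dimension~$1$, and invoke Corollary~\ref{allInC} to place all limit groups in that hierarchy. Your extra remark on the residual-finiteness bookkeeping — needed to ensure the intermediate amalgams actually land in $\mathcal{S}(\mathcal C_0)$ as defined — makes explicit a point the paper leaves implicit, and is correctly resolved by noting that the vertex groups arising in the $\omega$-rft decomposition are themselves (subgroups of) limit groups and hence residually finite.
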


 \subsection{Proof of Theorem A} Immediate from Theorems \ref{t:slow} and
 \ref{t:slowLimit}.

 \section{Homological slowness} \label{s:Hslow}
 
 In this section we present homological analogues of the results in the previous
 section. The results for residually free groups that are the main focus of this article
 can be deduced using these homological results rather than the homotopical ones, but this is
 not the point of presenting this variation on our earlier theme. The real justification is that
 these homological results apply to more groups. For example, one does not require the groups to
 have classifying spaces with finite skeleta. Also,
 if an amenable group $G$ of type $\F$
is residually finite and the group algebra $K G$ does not have zero divisors, then one knows that $G$ is $K$-slow in the following sense for any field $K$ \cite[Thm.~0.2 (ii)]{LLS} 
but we do not know that it must be slow. Thus one can use amenable groups among the
building blocks for groups built in a hierarchical manner by repeated application
of Proposition \ref{p:HslowGraphs}.
  
\begin{definition} \label{k-slow} Let $K$ be a field and let $G$ be a residually finite group. 
$G$ is {\em{$K$-slow above dimension $1$}} if for every chain of finite-index normal subgroups
$(B_n)$ with $\bigcap_n B_n =\{1\}$,  
$$
\limn  \frac{\dim_K H_j(B_n, K)}{[G:B_n]}=0$$
for all $j\ge 2$.

$G$ is {\em{$K$-slow}} if it satisfies the additional requirement that the limit exists and is zero for $j=1$ as well.
\end{definition}
 
The argument given in  Lemma \ref{lem2} establishes the following result.

\begin{lemma}\label{lem3}  Let $K$ be a field and let $G$ be a residually finite group of type $\rm{F}$ with an exhausting normal
chain   $(B_n)$. If
$G$ is $K$-slow above dimension $1$ then
$$
\limn \frac{ \dim H_1(B_n, K)}{[G:B_n]} = -\chi(G). 
$$
\end{lemma}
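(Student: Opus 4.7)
The plan is to mirror the proof of Lemma \ref{lem2}, replacing the cellular count of $\chi(B_n)$ by the Euler--Poincar\'e identity with coefficients in $K$. Since $G$ is of type $\rm{F}$ and each $B_n$ has finite index in $G$, each $B_n$ is itself of type $\rm{F}$; in particular, $\chi(B_n)$ is a well-defined integer, and multiplicativity of the Euler characteristic gives $\chi(B_n)=[G:B_n]\,\chi(G)$.

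Next I would compute $\chi(B_n)$ as an alternating sum of Betti numbers over $K$. Taking a finite $K(B_n,1)$ and using the standard fact that the Euler characteristic of a finite CW complex equals $\sum_j(-1)^j\dim_K H_j(-,K)$ for any field $K$ (a consequence of rank--nullity applied degreewise to the cellular chain complex), one obtains
$$[G:B_n]\,\chi(G)\;=\;\chi(B_n)\;=\;1\;-\;\dim H_1(B_n,K)\;+\;\sum_{j\ge 2}(-1)^j\dim H_j(B_n,K),$$
a finite sum.

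Finally I would divide through by $[G:B_n]$ and let $n\to\infty$. The $j=0$ contribution $1/[G:B_n]$ tends to $0$, since a finite group of type $\rm{F}$ is trivial, so $G$ is infinite (the assertion is vacuous otherwise) and an exhausting normal chain forces $[G:B_n]\to\infty$. The hypothesis that $G$ is $K$-slow above dimension $1$ kills every term with $j\ge 2$. What remains is precisely
$$\chi(G)\;=\;-\limn\frac{\dim H_1(B_n,K)}{[G:B_n]},$$
as required. The only substantive input beyond the proof of Lemma \ref{lem2} is the Euler--Poincar\'e identity with coefficients in an arbitrary field, so I anticipate no real obstacle.
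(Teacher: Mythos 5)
Your argument is correct and coincides with the paper's: the paper simply states that ``the argument given in Lemma \ref{lem2} establishes the following result,'' and that argument is exactly what you reproduce---multiplicativity of $\chi$, the Euler--Poincar\'e identity $\chi(B_n)=\sum_j(-1)^j\dim_K H_j(B_n,K)$ for a finite $K(B_n,1)$, and then $K$-slowness above dimension $1$ to kill the $j\ge 2$ terms. The one step you leave implicit (but which is fine) is that the number of nonzero terms is uniformly bounded, since $B_n$ inherits a finite $K(B_n,1)$ of dimension bounded by that of a finite $K(G,1)$, so the termwise limit passes to the alternating sum.
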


\begin{prop}\label{p:HslowGraphs} Let $K$ be a field.
 If a residually-finite groups $G$ is the fundamental group of a finite graph of groups where all of the edge-groups are
$K$-slow and all of the vertex-groups are $K$-slow above dimension $1$, then $G$ is 
$K$-slow above dimension $1$.
\end{prop}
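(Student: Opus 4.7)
The plan is to run the same Bass--Serre-theoretic counting used in Proposition \ref{p:good}, but to replace the volume-vector bookkeeping (which needed finite CW-models) by the Mayer--Vietoris long exact sequence in group homology. Let $\Gr$ be the given finite graph of groups with $G = \pi\Gr$ acting on its Bass--Serre tree $\tilde{\Gr}$, and let $(B_n)$ be an exhausting normal chain in $G$. Since $\bigcap_n B_n = \{1\}$ and $B_n \sn G$, for each vertex group $G_v$ and edge group $G_e$ the intersections $(B_n \cap G_v)$ and $(B_n \cap G_e)$ form exhausting normal chains of finite-index subgroups in $G_v$ and $G_e$ respectively, so the $K$-slowness hypotheses on the vertex and edge groups apply to these chains.

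The augmented cellular chain complex of $\tilde{\Gr}$ gives the standard short exact sequence of permutation $\Z G$-modules
$$
0 \to \bigoplus_{e \in E} \Z[G/G_e] \to \bigoplus_{v \in V} \Z[G/G_v] \to \Z \to 0.
$$
Restricting to $B_n$, each $\Z[G/G_v]$ decomposes as a sum of $B_n$-sets $\Z[B_n/(B_n \cap G_v^g)]$ indexed by the double cosets in $B_n \backslash G / G_v$, whose cardinality is $[G : B_n G_v]$; likewise for edges. Invoking Shapiro's Lemma and the fact that $B_n \cap G_v^g = (B_n \cap G_v)^g$ has the same homology as $B_n \cap G_v$ (similarly for edges), the associated long exact sequence in $H_*(-,K)$ yields, for every $j \geq 0$, the estimate
$$
\dim H_j(B_n, K) \leq \sum_{v \in V}[G : B_n G_v]\, \dim H_j(B_n \cap G_v, K) + \sum_{e \in E}[G : B_n G_e]\, \dim H_{j-1}(B_n \cap G_e, K).
$$

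Dividing by $[G:B_n]$ and invoking the identity $[G:B_n H]/[G:B_n] = 1/[H : B_n \cap H]$ from Remark \ref{l:count} gives
$$
\frac{\dim H_j(B_n, K)}{[G:B_n]} \leq \sum_{v \in V}\frac{\dim H_j(B_n \cap G_v, K)}{[G_v : B_n \cap G_v]} + \sum_{e \in E}\frac{\dim H_{j-1}(B_n \cap G_e, K)}{[G_e : B_n \cap G_e]}.
$$
For $j \geq 2$ each term in the first sum tends to $0$ because $G_v$ is $K$-slow above dimension $1$, and each term in the second sum tends to $0$ because $G_e$ is assumed $K$-slow in every dimension; this yields the required conclusion.

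There is no serious obstacle once the correct homological input is in hand. The one substantive point---and the reason the hypothesis demands \emph{full} $K$-slowness of the edge groups rather than merely slowness above dimension $1$---is that the edge contribution to $H_j(B_n, K)$ involves the $(j-1)$-st homology of the edge groups, so the case $j = 2$ forces us to control the edge groups in dimension $1$. This is directly parallel to how Proposition \ref{p:good} used slowness of edge groups to absorb $r_{k-1}(B_n \cap G_e)$-terms in the volume sum.
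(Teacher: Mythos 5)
Your proof is correct, and it takes a genuinely different---and arguably cleaner---route than the paper. The paper builds a concrete model of $K(B_n,1)$ by gluing vertex and edge classifying spaces along the covering graph-of-groups $B_n\qu\widetilde\Gr$, then decomposes it into overlapping open subsets $X_n\cup Y_n$ and runs the topological Mayer--Vietoris sequence; this yields the estimate
$$\dim H_j(B_n)\le\sum_v[G:B_nG_v]\dim H_j(B_n\cap G_v)+\sum_e[G:B_nG_e]\dim H_j(B_n\cap G_e)+2\sum_e[G:B_nG_e]\dim H_{j-1}(B_n\cap G_e),$$
with an extra argument needed at $j=2$ to handle the $H_1$ of the underlying graph appearing in all three pieces. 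You instead use the short exact sequence of permutation modules coming from the augmented cellular chain complex of the Bass--Serre tree, restrict it to $B_n$, and apply the long exact sequence in homology together with Shapiro's lemma and the double-coset decomposition. This gives the tighter estimate without the $\dim H_j(B_n\cap G_e)$ terms and without special treatment of low degrees. Both approaches reduce to the same coset-counting identity from Remark~\ref{l:count} and require only the hypotheses given (normality of $B_n$ for the double-coset count and for $B_n\cap G_v^g\cong B_n\cap G_v$; full $K$-slowness of edges because the $(j-1)$-st term hits $H_1$ at $j=2$); yours is a bit more algebraic and avoids the homotopy bookkeeping, while the paper's runs parallel to its homotopical counterpart Proposition~\ref{p:good}.
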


\begin{proof} Given a sequence of finite index normal subgroups $(B_n)$ exhausting $G$, we build
classifying spaces $K(B_n,1)$ as in Proposition \ref{p:covering}. Our aim is to decompose these spaces 
and use the Mayer-Vietoris sequence of this decomposition to establish the following inequality
for all $j\ge 2$ (expressed in the notation of Section \ref{ss:KG1}, with the coefficients $K$ omitted),
\begin{equation}\label{sumsH}
 \begin{aligned}
 \dim_K H_j(B_n)
 \leq 
&\sum_{v\in V}{[G:B_nG_v]}   \dim_K H_j(B_n\cap G_v) +\\
 &\sum_{e\in E} {[G:B_nG_e]} \dim_K H_{j}(B_n\cap G_e) +
  2\sum_{e\in E} {[G:B_nG_e]} \dim_K H_{j-1}(B_n\cap G_e).\\
  \end{aligned}
\end{equation}
%where $G_v$ and $G_e$ are the vertex and edge groups in the given decomposition of $G$. 
The proof can then be completed as in Proposition \ref{p:good}: we divide by $[G:B_n]$,
let $n\to\infty$ and use $K$-slowness (and a simple coset counting identity) to conclude that
$\lim_n   \dim_K H_j(B_n, K)/[G:B_n] =0$, as claimed.  

The desired decomposition of  $K(B_n,1)=X_n \cup Y_n$ is
obtained as follows. In  paragraph \ref{ss:KG1} we described how to assemble $K(B_n,1)$
from vertex spaces  $K(B_n\cap G_v, 1)$ and edge spaces
$K(B_n\cap G_e, 1)\times [0,1]$ according to the template of the graph 
of groups $B_n\qu \tilde\Gr$, where $\Gr$ is the given graph of groups
with $G=\pi_1\Gr$, and $\tilde\Gr$ is its universal cover. We define $X_n$ to be the
subset consisting of the images of the open edge spaces $K(B_n\cap G_e, 1)\times (0,1)$,
together with  the underlying graph $B_n\backslash \widetilde\Gr$ (the addition of which makes $X_n$ connected).
We define $Y_n'$ to be the union of 
the vertex spaces together with  the underlying graph $B_n\backslash \widetilde\Gr$. We then expand $Y_n'$ into
each edge space  $K(B_n\cap G_e, 1)\times (0,1)$, adding a small cylinder $
K(B_n\cap G_e, 1)\times (0,\epsilon)$ at each end of the edge and obtain $Y_n$ this way. Note that $Y_n'$ deformation retracts
onto $Y_n$, and
$X_n\cap Y_n$ is  easy to describe: it consists of  the underlying graph $B_n\backslash \widetilde\Gr$
 plus two disjoint copies of $K(B_n\cap G_e, 1)\times (0,\epsilon)$ for each edge
where the edge group is $B_n\cap G_e$.

In this construction,   $X_n$ is homotopic to the  1-point union 
of  the graph $B_n\backslash \widetilde\Gr$ and, for each edge $e\in E$, 
disjoint copies of $K(B_n\cap G_e, 1)$ indexed by $B_n\backslash G/G_e$. And
$Y_n$ is  homotopic to the  1-point union  of $B_n\backslash \widetilde\Gr$ and, for each vertex $v\in V$,
disjoint copies of $K(B_n\cap G_v, 1)$ indexed by $B_n \backslash G /G_v$. 
Thus for each $j\ge 2$, omitting the coefficients $K$, we have
$$
 H_j (Y_n) = \Oplus_{v\in V} H_j(B_n\cap G_v)^{[G:B_nG_v]},\ \ \ 
 H_j (X_n) = \Oplus_{e\in E} H_j(B_n\cap G_e)^{[G:B_nG_e]},
 $$
 and
 $
 H_j (X_n\cap Y_n) =  H_j (X_n)\oplus  H_j (X_n)$.
 For $j\ge 3$, the required
 estimate (\ref{sumsH}) is now immediate from the exactness of the Mayer-Vietoris sequence
$$
\dots \to H_j(X_n,K)\oplus H_j (Y_n,K) \to H_j(B_n,K) \to H_{j-1}(X_n\cap Y_n, K)\to\dots
$$

In the calculation of $H_1$, the graph  $B_n\backslash \widetilde\Gr$ contributes
equally to $X_n,\, Y_n$ and $X_n\cap Y_n$, augmenting each of the above
forumlae with a summand $H_1(B_n\backslash \widetilde\Gr, K)$.  
The map on homology
induced by the inclusions of $X_n\cap Y_n$ maps the summand $H_1(B_n\backslash \widetilde\Gr)$ of $H_1(X_n\cap Y_n)$ 
isomorphically to the diagonal of the summand
$H_1(B_n\backslash \widetilde\Gr)\oplus H_1(B_n\backslash \widetilde\Gr)$ in
$H_1(X_n)\oplus H_1(Y_n)$. In particular, the $H_1(B_n\backslash\widetilde\Gr)$ summand
contributes nothing to the kernel of $H_1(X_n\cap Y_n)
\to H_1(X_n)\oplus H_1(Y_n)$, so the exactness of the Mayer-Vietoris
sequence gives us the desired estimate in the case $j=2$ as well.
\end{proof}

 \section{The Proof of Theorem \ref{maintheorem}} \label{s6}

We now turn our attention towards residually free groups that are not limit groups. These form a much
larger and wilder class of groups. In the next section we shall use the structure theory of residually free groups,
as summarised in Section 2, to deduce Theorem \ref{t:rf} from the following more general result, which was stated as
Theorem \ref{maintheorem} in the introduction.

\begin{thm}\label{localT}
Let $G \subseteq G_1 \times \ldots \times G_k$ be a 
subdirect product of residually-finite groups of type $\F$, each of which
contains a normal free subgroup $F_i<G_i$ such that $G_i/F_i$ is 
torsion-free and nilpotent. Assume $F_i\subseteq G\cap G_i$. Let $m < k$ be an integer, let $K$
be a field, and suppose that each $G_i$ is  $K$-slow above dimension
$1$.

If the projection of $G$ to each $m$-tuple of factors 
$G_{j_1} \times \ldots \times G_{j_m} <G$ is of finite index, then
there exists an exhausting normal chain 
$(B_n)$ in $G$ so that for $0 \leq j \leq m$,
$$\limn \frac{ \dim H_j  (B_n, K)}{ [G : B_n]} = 0.$$ 
\end{thm}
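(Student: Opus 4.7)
The plan is to construct an explicit exhausting normal chain $(B_n)$ in $G$ and then to bound $H_j(B_n, K)$ for $j\le m$ via the Lyndon--Hochschild--Serre (LHS) spectral sequence for a canonical nilpotent extension, reducing the problem to a homology gradient estimate on a subdirect product of free groups. Using residual finiteness of each $G_i$, I fix exhausting normal chains $H_i^{(n)}\triangleleft G_i$ with $\bigcap_n H_i^{(n)}=\{1\}$ and set $B_n:=G\cap (H_1^{(n)}\times\cdots\times H_k^{(n)})$; then $B_n\triangleleft G$ is of finite index and $\bigcap_n B_n=\{1\}$. Since the image of $G$ in $\prod_i G_i/H_i^{(n)}$ has index controlled by the finite-index $m$-tuple projection hypothesis (via Remark \ref{l:count}), one obtains effective lower bounds on $[G:B_n]$ in terms of $\prod_i[G_i:H_i^{(n)}]$.

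Next, let $F:=F_1\times\cdots\times F_k\triangleleft D:=G_1\times\cdots\times G_k$ and set $L_n:=B_n\cap F$. The quotient $Q_n:=B_n/L_n$ embeds into the torsion-free nilpotent group $D/F=\prod_i(G_i/F_i)$, so $Q_n$ is torsion-free nilpotent of Hirsch length at most $h:=\sum_i h(G_i/F_i)$, with uniformly bounded homology in the sense that $\dim_K H_p(Q_n,M)\le 2^h\dim_K M$ for every finite-dimensional $K Q_n$-module $M$. The LHS spectral sequence
$$E_2^{p,q}=H_p\bigl(Q_n,\,H_q(L_n,K)\bigr)\Longrightarrow H_{p+q}(B_n,K)$$
then reduces the theorem to the statement that, for $0\le q\le m$,
$$\limn \frac{\dim_K H_q(L_n,K)}{[G:B_n]}=0.$$

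The main step is to establish this vanishing. Here $L_n$ is a subdirect product of the free groups $F_i\cap H_i^{(n)}$, and it inherits from $G$, up to a bounded loss arising from the nilpotent quotient $D/F$, a finite-index projection property to every $m$-tuple of those free factors. I would prove the vanishing by a secondary induction on $k$ (equivalently on $k-m$): writing $L_n$ as an asymmetric fibre product of subdirect products with one fewer factor, applying a Mayer--Vietoris / spectral-sequence argument in the spirit of Bridson--Howie--Miller--Short and Kochloukova, and using as the base case the $K$-slowness of free groups, which is immediate since a finite-index subgroup of $F_r$ of index $\ell$ has rank $\ell(r-1)+1$ and trivial higher homology.

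The main obstacle lies in this secondary induction: one must simultaneously track the $m$-tuple finite-index hypothesis as $G$ is successively replaced by $L_n$ and then by its sub-fibre-products, while controlling the $K Q_n$-module structure on each $H_q(L_n,K)$ finely enough that the resulting LHS estimate $\dim_K H_j(B_n,K)\le 2^h\sum_{p+q=j}\dim_K H_q(L_n,K)$ truly grows sublinearly in $[G:B_n]$ after division. The interplay between the subdirect product combinatorics that governs $\dim_K H_q(L_n,K)$ and the twisted coefficient system that governs the action of $Q_n$ on it is what makes the argument delicate, and is why, by the authors' own admission, it occupies nearly half the paper.
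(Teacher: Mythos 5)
Your proposal contains a fundamental misapprehension about the structure of $L_n = B_n \cap F$, and more importantly it overlooks the central difficulty that drives the whole of Section~\ref{s6}, namely that the free normal subgroups $F_i < G_i$ need \emph{not} be finitely generated.

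First, a minor point: since the hypotheses give $F_j < G \cap G_j$, each $F_j$ actually lies inside $G$, so $L_n = B_n \cap (F_1\times\cdots\times F_k)$ is in fact a genuine \emph{direct} product $\prod_j (B_n \cap F_j)$ and not a proper subdirect product. Your proposed ``secondary induction on $k$'' through asymmetric fibre products of subdirect products of free groups is therefore aimed at a difficulty that is not there. But once this is seen, a far more serious gap emerges. In the intended application (and generically for free-by-nilpotent groups) the $F_j$ are \emph{infinitely generated}: for instance, the Kochloukova subgroup of a non-abelian limit group is an infinite-rank free group. Consequently $H_q(L_n, K)$ can be infinite-dimensional for $q \ge 1$, and your crude bound $\dim H_p(Q_n, M) \le 2^h \dim_K M$ bounds nothing. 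The clean reduction ``LHS for the nilpotent quotient, then K\"unneth for a product of free groups'' only works when the $F_j$ are finitely generated, which is not assumed and is false in the cases of interest. This is also why your claimed base case (``$K$-slowness of free groups is immediate since a finite-index subgroup of $F_r$ has rank $\ell(r-1)+1$'') does not apply: the free groups appearing at the bottom of your induction are not finitely generated.

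This is precisely where the $K$-slowness of the $G_i$ must be used, and your argument never uses it; indeed, nothing in your outline uses the hypothesis that the $G_i$ are $K$-slow above dimension $1$, nor the $m$-tuple finite-index projection hypothesis, which is another sign that the reduction has lost essential content. The paper's proof controls the otherwise unbounded twisted homology $H_q\bigl(L_i/(F\cap L_i),\, (L_i\cap F)^{\mathrm{ab}}\otimes K\bigr)$ not by counting cells in the nilpotent quotient but by reading it off the $E^2$-page of the LHS spectral sequence for $1 \to L_i\cap F \to L_i \to L_i/(L_i\cap F) \to 1$ as a subquotient of $H_{q+1}(L_i, K)$ (see Lemma~\ref{lemma(s-1,1)}); the $K$-slowness of $G$ then bounds $\dim H_{q+1}(L_i, K)$ linearly in the index. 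This is the essential mechanism, and it requires keeping the ambient groups $G_i$ in play rather than reducing immediately to subgroups of $F_1 \times \cdots \times F_k$. It also requires the exhausting chain $(B_n)$ to be built with the very precise compatibility conditions of Lemma~\ref{l:filter} — particularly items (2), (5) and (6), relating the projections $\pi_j(B_n)$, the $m$-tuple projections $\pi_{j_1,\dots,j_m}(B_n)$, the $F_j$, and the power subgroups $G^{[s_i]}$ — none of which are guaranteed by the naive choice $B_n = G \cap (H_1^{(n)}\times\cdots\times H_k^{(n)})$. Your proposal therefore misses both the construction of the right filtration and the mechanism by which $K$-slowness of the $G_i$ controls the homology of the fibres.
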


The proof of this theorem occupies the whole of this section.  We break it into four steps. First, we
construct exhausting chains $(B_n)$ of finite-index subgroups in  $G$ that are carefully adapted to our purposes. We then state two
technical propositions -- Propositions $\X$ and $\Y$ -- that provide key estimates in the basic spectral sequence argument that we use to prove Theorem \ref{localT}.
This basic spectral sequence argument is carried out in subsection \ref{XYimplyF}, after which we return to Propositions $\X$ and $\Y$ and prove them.
These proofs require extensive spectral sequence calculations that are much more involved than the `basic' one referred to above. 

 We present  an outline of the 
 basic spectral sequence argument immediately  
so that the reader can see where we are going and what we are doing. We assume that the reader is familiar with the Lyndon-Hochschild-Serre (LHS) spectral
sequence in homology associated to a short exact sequence of groups (see \cite{brown}, p.171).
 In the case where the coefficient module is the field $K$ with trivial
action, the $E^2$-page of the LHS spectral sequence associated to $1\to N\to \G\to Q\to 1$ has $(p,q)$-term
 $E_{p,q}^2 = H_p(Q, H_q(N,K))$ and the terms $E_{p,q}^\infty$ with $p+q=s$ are the composition factors of a series for $H_s(\G, K)$.

\subsection{The spectral sequence argument we want to use}
Given $G$ as in Theorem \ref{localT}, we will construct carefully an exhausting normal chain $(B_n)$
so that (after lengthy argument) we obtain enough control on the dimension of the homology groups $H_p(B_n/(B_n\cap N),K)$
and $ H_q(B_n\cap N, K)$ with $p$ and $q$ in a suitable
range, where $N$ is the direct product of the free groups $F_i$ in the statement of Theorem \ref{localT}.
We then argue as follows:

 \begin{lemma} \label{spectral} Let $( B_n )$ be an exhausting normal
 chain for  $G$, let $N\sn G$ be a normal subgroup and let $K$ be a field. Fix an integer $s$.
Suppose for all non-negative integers $\alpha, q$ with $\alpha+q = s$ and every $n$ we have
  $\dim H_{\alpha}(B_n/ B_n \cap N, H_q(B_n \cap N, K)) < \infty$. Suppose further that, for all such
  $(\alpha,q)$,
  $$
 \limn \frac  {\dim H_{\alpha}(B_n/ B_n \cap N, H_q(B_n \cap N, K))}{[G : B_n]}= 0.
 $$
 Then  $$\limn\frac {\dim H_s  (B_n, K)}{[G : B_n]}  = 0.$$
 \end{lemma}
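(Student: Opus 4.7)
The plan is to apply the Lyndon--Hochschild--Serre (LHS) spectral sequence to the short exact sequence
\[ 1 \to B_n \cap N \to B_n \to B_n/(B_n\cap N) \to 1, \]
whose $E^2$-page has entries $E^2_{p,q}(n) = H_p(B_n/(B_n \cap N), H_q(B_n \cap N, K))$, and to exploit the fact that the $E^\infty$-entries on the line $p+q=s$ are the successive quotients of a finite filtration of $H_s(B_n,K)$.

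The first step is to record the finiteness inputs. By hypothesis, each $E^2_{p,q}(n)$ with $p+q=s$ is a finite-dimensional $K$-vector space; there are only $s+1$ such entries. Since every $E^r_{p,q}(n)$ for $r\geq 2$ is a subquotient of $E^2_{p,q}(n)$, we have $\dim_K E^\infty_{p,q}(n) \leq \dim_K E^2_{p,q}(n)$ for each $(p,q)$ with $p+q=s$, and consequently
\[ \dim_K H_s(B_n, K) \;\leq\; \sum_{p+q = s} \dim_K E^\infty_{p,q}(n) \;\leq\; \sum_{\alpha + q = s} \dim_K H_\alpha\bigl(B_n/(B_n\cap N),\, H_q(B_n\cap N, K)\bigr). \]
This is the only genuinely spectral-sequence-theoretic input needed; no differentials need to be analysed, since we only want an upper bound on dimensions.

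The second step is the limiting argument. Dividing the displayed inequality by $[G:B_n]$ gives
\[ \frac{\dim_K H_s(B_n, K)}{[G:B_n]} \;\leq\; \sum_{\alpha + q = s} \frac{\dim_K H_\alpha\bigl(B_n/(B_n\cap N),\, H_q(B_n\cap N, K)\bigr)}{[G:B_n]}. \]
The sum on the right has a fixed finite number of terms (namely $s+1$), and by the standing hypothesis each individual term tends to $0$ as $n\to\infty$. Hence the right-hand side tends to $0$, forcing
\[ \lim_{n\to\infty} \frac{\dim_K H_s(B_n, K)}{[G:B_n]} = 0, \]
which is the conclusion of the lemma.

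There is essentially no obstacle: the argument is a standard bookkeeping application of the LHS spectral sequence, and the finiteness of $\dim_K E^2_{p,q}(n)$ for $p+q=s$ (required in order to speak of dimensions at all on the $E^\infty$-page and in $H_s(B_n,K)$) is exactly what has been assumed. The real work of the section lies not here but in verifying the hypotheses of this lemma --- that is, in constructing the chain $(B_n)$ and controlling the growth of $\dim H_\alpha(B_n/(B_n\cap N), H_q(B_n\cap N, K))$ for the relevant $(\alpha,q)$; that is the content of Propositions $\X$ and $\Y$.
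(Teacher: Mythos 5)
Your proof is correct and follows exactly the same route as the paper: apply the LHS spectral sequence for $1\to B_n\cap N\to B_n\to B_n/(B_n\cap N)\to 1$, bound $\dim H_s(B_n,K)$ by the sum of the $E^2$-entries along $\alpha+q=s$, then divide by $[G:B_n]$ and let $n\to\infty$. The paper states this more tersely but the content is identical.
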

 
 \begin{proof} The LHS spectral sequence
 $$
 E_{\alpha,q}^2 = H_{\alpha}(B_n/ B_n \cap N, H_q(B_n \cap N, K))
 $$
 converges via $\alpha$ to $H_{\alpha+q}(B_n, K)$. Hence $$\dim H_s(B_n, K) \leq \sum_{\alpha+q = s} \dim H_{\alpha}(B_n/ B_n \cap N, H_q(B_n \cap N, K)).$$
 \end{proof}

\subsection{Special filtrations of subdirect products}

\iffalse
In the following lemma we construct some special filtrations $( B_i )$ of normal subgroups of finite index in $G$, that under some specific conditions are of $p$ power index.  Note that in \cite{Lackenby}, \cite{Lackenby2} exhaustions by normal subgroups of $p$-power index are considered.
\fi

We remind the reader that $G<G_1\times\dots\times G_k$ is called a {\em subdirect product} of the groups $G_i$ if it projects onto each factor, i.e.
$\p_i(G) = G_i$ for all $1 \leq i \leq k$. It is called a {\em full} subdirect product if, in addition, $G\cap G_i\neq 1$ for all $1 \leq i \leq k$.

\smallskip

{\em Notation:} Given a direct product $G_1 \times \ldots \times G_k$ and indices $\I=\{i_1,\dots, i_m\}$ with 
$1\leq i_1 < \ldots < i_m \leq k$ we denote the canonical projection by
$$
\p_\I  : G_1 \times \ldots \times G_k \to
G_{i_1} \times \ldots \times G_{i_m},$$
or $\p_{i_1, \ldots , i_m}$, if it is appropriate to be more expansive. 

For a group $H$ and an integer $d$, we write $H^{[d]}$ to denote the subgroup generated by $\{h^d \mid h\in H\}$.

 \begin{lemma} \label{l:filter} Let  $\Gamma=G_1\times\dots\times G_k$ be a direct product of  finitely-generated residually-finite groups.
 Let $G<\Gamma$ be a subdirect product.
 Assume that for all $1 \leq j \leq k$  there is a free group $F_j<G\cap G_j$ that is normal in $G_j$
 with $G_j / F_j$ torsion-free and nilpotent. 
Let $N = F_1 \times \ldots \times F_k \subseteq G$.
Let $m\le k$ be an integer and assume that
 $\p_{j_1, \ldots, j_m}(G)$ has finite index in $G_{j_1} \times \ldots \times G_{j_m}$ for all $ 1 \leq j_1 < \ldots < j_m \leq k$.

Then, one can exhaust $G$ by a chain of finite-index normal subgroups  $( B_n )$ such that :
 
\begin{enumerate}
\item $B_n \cap N = (B_n \cap F_1) \times \ldots \times (B_n \cap F_k)$ for every $n$; 

\item $\bigcap_n B_n N = N$; 
 
\item $\bigcap_n \p_j(B_n) = 1$ for all $1 \leq j \leq k$;
 
\item $B_n \cap F_j = \p_j(B_n) \cap F_j$ for all $n$ and all $1 \leq j \leq k$; 
 
\item $F_j=\bigcap_n \p_j(B_n) F_j $ for all $1 \leq j \leq k$;
 
\item there exists a positive integer $\delta$ such that for all $n$ and $1 \leq j_1 < \ldots < j_m \leq k$
$$ 
\left(\prod_{i=1}^m \p_{j_i}(B_n)^{[\delta]}\right) \left(B_n \cap \prod_{i=1}^m F_{j_i} \right)
\subseteq
\p_{j_1, \ldots, j_m}(B_n) \subseteq \prod_{i=1}^m \p_{j_i}(B_n)
$$
and
$
\left(\prod_{i=1}^m \p_{j_i}(B_n)^{[\delta]}\right) \left(B_n \cap \prod_{i=1}^m F_{j_i} \right)$ is a normal subgroup of finite index in 
$\prod_{i=1}^m \p_{j_i}(B_n)$ such that  the quotient group is a subquotient of $\Gamma / N$.
 \end{enumerate} 

Furthermore, if every $G_j$ is residually $p$-finite for a fixed prime $p$  then every $B_n$ can be chosen to have $p$-power index in $G$.
 \end{lemma}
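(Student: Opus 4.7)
The plan is to construct $B_n$ as an intersection of pullbacks at two levels: from the individual factors $G_j$ (to force $\bigcap_n (B_n\cap F_j)=1$) and from the torsion-free nilpotent quotient $\Gamma/N=\prod_j G_j/F_j$ (to control behaviour modulo $N$). Since $N\leq G$, the group $Q:=G/N$ embeds in the finitely generated torsion-free nilpotent group $\Gamma/N$, so $Q$ is itself finitely generated torsion-free nilpotent and in particular residually finite. Using residual finiteness of each $G_j$ and of each $G_j/F_j$, I would pick a descending chain of finite-index normal subgroups $W_{j,n}\trianglelefteq G_j$ with $\bigcap_n W_{j,n}=1$ and $\bigcap_n W_{j,n}F_j=F_j$, and set $V_{j,n}:=W_{j,n}\cap F_j$, $U_{j,n}:=W_{j,n}F_j/F_j$, $P_n:=Q\cap\prod_j U_{j,n}\trianglelefteq Q$, and $\tilde B_n\trianglelefteq G$ the preimage of $P_n$ under $G\twoheadrightarrow Q$; finally set
\begin{equation*}
B_n\;:=\;\tilde B_n\cap(W_{1,n}\times\dots\times W_{k,n}).
\end{equation*}

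A short coordinate-wise calculation shows $B_nN=\tilde B_n$: for $x\in\tilde B_n$ one has $\pi_j(x)\in W_{j,n}F_j$, so writing $\pi_j(x)=w_jf_j$ lets one factor $x=(w_1,\dots,w_k)\cdot(f_1,\dots,f_k)$ with the first factor in $B_n$ and the second in $N$. Once this is in hand, properties (1)--(5) are routine: (1) and (4) follow from $B_n\cap F_j=V_{j,n}$; (2) from $\bigcap_n\tilde B_n=N$; (3) from $\pi_j(B_n)\subseteq W_{j,n}$; and (5) from $\pi_j(B_n)F_j\subseteq W_{j,n}F_j$.

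The main work is in (6), where a single $\delta$ must be chosen uniformly across all $n$ and all $m$-tuples $I=(j_1,\dots,j_m)$. The tool is the torsion-free nilpotent group $\bar G_I:=G_I/F_I$ and its finite-index subgroup $Q_I:=\bar\pi_I(G)$: by the finite-index hypothesis on $\pi_I(G)$ the indices $[\bar G_I:Q_I]$ are finite and, taken over the finitely many $m$-tuples, admit a common upper bound, so one can choose $\delta$ with $\bar G_I^{[\delta]}\subseteq Q_I$ for every $I$. Given $h\in\pi_{j_i}(B_n)$, the task is to show $(1,\dots,h^\delta,\dots,1)\in\pi_I(B_n)\cdot(B_n\cap F_I)$. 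The strategy is to lift $h$ to $b\in B_n$, observe that $\pi_I(b^\delta)\in\pi_I(B_n)$, and notice that in $\bar G_I$ the required element $(1,\dots,\bar h^\delta,\dots,1)$ differs from $\bar\pi_I(b^\delta)$ by a $\delta$-th power which lies in $Q_I$. Closing the gap between $Q_I$ and $P_n^I=\bar\pi_I(B_n)$ is the technical heart of the argument: it uses that the $\delta$-th power in question lies in $\prod_{i'}U_{j_{i'},n}$ and that $b$ itself satisfies $\pi_j(b)\in W_{j,n}$ for \emph{every} $j$, including $j\notin I$, together with the definition of $P_n$ as $Q\cap\prod_jU_{j,n}$. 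The residual $F_I$-error is pinned down to $V_{j_1,n}\times\dots\times V_{j_m,n}=B_n\cap F_I$ because every coordinate of the correction already lies in the appropriate $W_{j_{i'},n}$. Normality and the subquotient claim then follow formally: each $\pi_{j_i}(B_n)^{[\delta]}$ is characteristic, and the quotient identifies with $\prod_i\bar\pi_{j_i}(B_n)/\prod_i\bar\pi_{j_i}(B_n)^{[\delta]}$, a finite subquotient of $\bar G_I\subseteq\Gamma/N$.

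For the $p$-power refinement, when each $G_j$ is residually $p$-finite I would take each $W_{j,n}$ of $p$-power index in $G_j$; all subsequent intersections and preimages preserve $p$-power index, so each $B_n$ has $p$-power index in $G$.
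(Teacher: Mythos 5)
Your $B_n = G\cap(W_{1,n}\times\cdots\times W_{k,n})$ is exactly what the paper calls $\widetilde B_i$, and the paper deliberately does \emph{not} take this as the final chain: it sets $B_i := G^{[s_i]}(\widetilde B_i\cap N)$, a different group. Properties (1)--(5) do hold for your $B_n$, but property (6) is where the constructions diverge and where your argument has a genuine gap.

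The problem is that you impose no coherence between the filtrations $(W_{j,n})_n$ for different $j$, and without such coherence a single $\delta$ independent of $n$ cannot exist. Concretely, take $k=3$, $m=2$, $G_j$ free of rank $2$, $F_j=[G_j,G_j]$, and $G=\{(a,b,c): \bar a+\bar b+\bar c=0\text{ in }\Z^2\}$, so that all pairwise projections are surjective. Choose normal filtrations of $G_j$ whose images $U_{j,n}$ in $G_j/F_j\cong\Z^2$ are $U_{1,n}=2^n\Z^2$, $U_{2,n}=6^n\Z^2$, $U_{3,n}=3^n\Z^2$ (intersected, if one likes, with an exhausting filtration of each $G_j$ to ensure $\bigcap_n W_{j,n}=1$). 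Since $2^n\Z^2+3^n\Z^2=\Z^2$, one finds $\pi_1(B_n)=W_{1,n}$. Now the left inclusion in item (6) for $I=\{1,2\}$ forces $\delta\,U_{1,n}\subseteq U_{3,n}$, i.e. $3^n\mid \delta 2^n$, i.e. $3^n\mid\delta$, for every $n$ --- impossible for a fixed $\delta>0$. So your $B_n$ does not satisfy (6).

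The sketch you give for (6) is also circular at its key step. After lifting $h\in\pi_{j_1}(B_n)$ to $b\in B_n$ and factoring out $\bar\pi_I(b^\delta)$, you are left with showing that $(1,\bar\pi_{j_2}(b)^{-\delta},\ldots,\bar\pi_{j_m}(b)^{-\delta})$ lies in $\bar\pi_I(B_n)$ --- which is the same type of statement you started with (one coordinate at a time), and ``$Q_I\cap\prod_i U_{j_i,n}\subseteq\bar\pi_I(B_n)$'' is not true in general: a preimage of such an element in $G$ need not have its coordinates outside $I$ landing in the $W_{j,n}$, and the example above shows the obstruction is real. The paper's device for avoiding exactly this is to saturate $\widetilde B_i\cap N$ by a global power $G^{[s_i]}$, chosen so that $G^{[s_i]}\subseteq\widetilde B_i$. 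This makes $\pi_j(B_i)=G_j^{[s_i]}(B_i\cap F_j)$ and, crucially, $\pi_I(B_i)=(\pi_I(G))^{[s_i]}X_i$ for a normal $X_i\subseteq N$: the \emph{same} exponent $s_i$ appears in every coordinate, so the claim about a uniform $\delta$ reduces to the elementary fact (the paper's Claim) that for a finite-index subgroup $Q_0$ of a finitely generated torsion-free nilpotent group $Q$ there is a $\delta$ with $(Q^{[s]})^{[\delta]}\subseteq Q_0^{[s]}$ for all $s$. You would need to either adopt this $G^{[s_i]}(\widetilde B_i\cap N)$ device, or else build a compatibility constraint across the $W_{j,n}$ equivalent to it; as written your construction does not deliver (6).
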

 
 \begin{proof} Every torsion-free
 finitely generated nilpotent group is residually $p$ (see \cite{Baumslag}). Thus we may exhaust each $G_j/F_j$ by a sequence of normal subgroups
 of $p$-power index, and pulling these back to $G_j$ we obtain normal subgroups of $p$-power index 
  $( S_{j,i} )_i$ in $G_j$  such that $\cap_i S_{j,i} F_j = F_j$. Next,
since $G_j$  is a residually finite  there is a filtration $( K_{j,i} )_i$ of $G_j$ by normal subgroups of finite index so that  $\bigcap_i K_{j,i} = 1$ and $K_{j,i} \subseteq S_{j,i}$. If $G_j$ is residually finite $p$-group we can assume that $K_{j,i}$ has $p$-power index in $G_j$. In any case,
$
F_j \subseteq \bigcap_i K_{j,i} F_j \subseteq \bigcap_i S_{j,i} F_j = F_j,
$
hence
\begin{equation} \label{star11}
 \bigcap_i K_{j,i} F_j = F_j.
\end{equation}
Define $$A_i = K_{1,i} \times \ldots \times K_{k,i} \hbox{ and } \widetilde{B}_i = A_i\cap G.
$$
 Then by (\ref{star11})
\begin{equation} \label{equ1}
\bigcap_i A_i N = \bigcap_i (K_{1,i} F_1 \times \ldots \times
K_{k,i} F_k) = (\bigcap_i K_{1,i} F_1) \times \ldots \times
(\bigcap_i K_{k,i} F_k) =$$ $$  F_1 \times \ldots \times F_k = N
\end{equation}
and since $N \subseteq G$ and \begin{equation} \label{inclusion101} K_{j,i} \cap F_j \subseteq A_i \cap F_j = A_i \cap G \cap F_j = \widetilde{B}_i \cap F_j \end{equation} we have
 $$\widetilde{B}_i \cap N = A_i \cap G \cap N = A_i \cap N =
 (K_{1,i} \cap F_1)\times \ldots \times (K_{k,i} \cap F_k)
 \subseteq $$ 
\begin{equation} \label{star12}
(\widetilde{B}_i \cap F_1) \times \ldots \times (\widetilde{B}_i \cap F_k) \subseteq \widetilde{B}_i \cap N.
 \end{equation}

 Writing $n_i$ for the exponent of the finite group $G / \widetilde{B}_i$ we have 
 \begin{equation} \label{equ2} G^{[n_i]} \subseteq
 \widetilde{B}_i.\end{equation}
 (Note that if $K_{j,i}$
has $p$-power index in  $G_j$ for all $j$ then $n_i$ is a power of $p$.)

Inductively, for each $i$ we choose
 $s_i \in n_i \mathbb{Z} $ so that $s_i$ divides $s_{i+1}$ and $s_i$ is divisible by $p^{a_i}$ where $a_i$ goes to infinity as $i$ goes to infinity. (If
 all $n_i$ are powers of $p$ then we choose $s_i$ to be powers of $p$.)
 We will impose some extra conditions on $s_i$ later.
 
 Finally we are able to define \begin{equation} \label{equ3} B_i := G^{[s_i]} (\widetilde{B}_i \cap N).\end{equation}
From (\ref{equ2}) and (\ref{equ3}) we have
 \begin{equation}  \label{eqNN1}  B_i \subseteq \widetilde{B}_i \hbox{ and } B_i \cap N = \widetilde{B}_i \cap N.
 \end{equation}
 And since $(G/N)^{[s_i]}$ has finite index in the nilpotent $G/ N$ while
 $\widetilde{B}_i \cap N$ has finite index in $N$, we see that $B_i<G$ is a {\em normal subgroup of finite index}.
 \smallskip
 
{\bf{ (1).}} From (\ref{star12}) and (\ref{eqNN1}) we have
$$ B_i \cap N = \widetilde{B}_i \cap N = (\widetilde{B}_i \cap F_1) \times \ldots \times (\widetilde{B}_i \cap F_k),$$
and
 \begin{equation} \label{inclusion102}
  B_i \cap F_j = (B_i \cap N) \cap F_j = (\widetilde B_i \cap N) \cap F_j 
  = \widetilde{B}_i \cap F_j.\end{equation}
  Then
  \begin{equation} \label{auxiliar1} B_i \cap N  = \widetilde{B}_i \cap N = ({B}_i \cap F_1) \times \ldots \times ({B}_i \cap F_k).\end{equation}

{\bf{(2).}} We have $B_i \subseteq \widetilde{B}_i \subseteq A_i$, so from (\ref{equ1}) we deduce
$
 N \subseteq \bigcap_i B_i N \subseteq \bigcap_i A_i N = N$
   hence   \begin{equation} \label{auxiliar2} \bigcap_i B_i N = N.\end{equation}
Also,
 \begin{equation} \label{auxiliar3}
 \bigcap_i B_i \subseteq \bigcap_i A_i = (\bigcap_i K_{1,i}) \times \ldots \times (\bigcap_i K_{k,i}) = 1.
 \end{equation}
 Thus $( B_i )$ is an exhausting filtration of $G$.

 {\bf{(3).}}  Consider $\bigcap_i \p_j(B_i)$. Note that  by (\ref{star12}) and (\ref{equ3}) $$\p_j(B_i) = \p_j(G^{{[s_i]}}) \p_j(\widetilde{B}_i \cap N) =
 G_j^{{[s_i]}} \p_j((K_{1,i} \cap F_1) \times \ldots
 \times (K_{k,i} \cap F_k)) =$$ 
\begin{equation} \label{auxiliar4.1}
 G_j^{{[s_i]}} (K_{j,i} \cap F_j) \subseteq G_j^{{[s_i]}} N.
\end{equation}
 Recall that $G/ N$ is a finitely generated torsion-free nilpotent group, so is residually $p$-finite and
 $\bigcap_t (G/N)^{[t]} =
 1$, whenever $t$ runs through an increasing sequence of $p$-powers. Then
 by (\ref{auxiliar4.1})
 $
 \bigcap_i \p_j(B_i) \subseteq \bigcap_i G_j^{[s_i]} N \subseteq \bigcap_i G_j^{[p^{a_i}]} N \subseteq N,$ so
 \begin{equation} \label{eqNN2}  \bigcap_i \p_j(B_i) = \bigcap_i (\p_j(B_i) \cap N).
 \end{equation}
 Note that by (\ref{auxiliar4.1})
 \begin{equation} \label{eqNN3}
 \p_j(B_i) \cap N = (G_j^{[s_i]} (K_{j,i} \cap F_j)) \cap N =$$ $$
 (G_j^{[s_i]} \cap N) (K_{j,i} \cap F_j) = (G_j^{[s_i]} \cap F_j) (K_{j,i} \cap F_j).
 \end{equation}
 Now we specify the choice of $s_i$ more tightly, multiplying our original choice by the exponents of the finite group $G_j/K_{j,i}$
 if necessary to ensure that for all $i$ and all $1\le j\le k$ we have  
\begin{equation}
G_j^{[s_i]}  \subseteq K_{j,i}.
\end{equation}
 Then by (\ref{eqNN3})
 \begin{equation} \label{auxiliar5} \p_j(B_i) \cap N =
 (G_j^{{s_i}} \cap F_j) (K_{j,i} \cap F_j) = K_{j,i} \cap
 F_j \subseteq
 K_{j,i},\end{equation}  hence by (\ref{eqNN2}), (\ref{auxiliar5}) and the definition of $K_{j,i}$ $$ \bigcap_i \p_j(B_i) = \bigcap_i (\p_j(B_i) \cap N) \subseteq \bigcap_i K_{j,i} = 1.
  $$
  
{\bf{(4).}} 
 By (\ref{inclusion101}), (\ref{inclusion102}) and (\ref{auxiliar5})
  $$
  K_{j,i} \cap F_j \subseteq \widetilde{B}_i \cap F_j = B_i \cap F_j \subseteq  
\p_j(B_i) \cap N = K_{j,i} \cap F_j
  $$
  and so
  \begin{equation} \label{auxiliar6} 
  B_i \cap F_j = \p_j(B_i) \cap F_j.
  \end{equation}
  
{\bf{(5).}} 
  By (\ref{auxiliar4.1})
  $$
  \bigcap_i \p_j(B_i) F_j = \bigcap_i G_j^{[s_i]} (K_{j,i} \cap F_j) F_j = \bigcap_i G_j^{[s_i]} F_j = F_j.
  $$
  
{\bf{(6).}}  Define
$$Y_i:=   \p_{j_1}(B_i) \times \ldots \times \p_{j_m}(B_i) \text{  and  } X_i=Y_i\cap N.
$$
Note that $Y_i$ is the term on the right in the statement of item (6). We claim
that $X_i$ is equal to the second bracketed term on the left. Indeed, from
 (\ref{auxiliar1}) and (\ref{auxiliar6}) we have
\begin{equation} \label{equ4}
B_i\cap \prod_{t=1}^m F_{j_t}
= \prod_{t=1}^m (B_i \cap F_{j_t}) 
=\prod_{t=1}^m (\p_{j_t}(B_i) \cap F_{j_t})  
= X_i  \vartriangleleft   Y_i.
\end{equation}
To see that the  middle term of item (6) is contained in $Y_i$, we use (\ref{auxiliar1}) and (\ref{equ3}) to calculate:
\begin{equation} \label{eqAB1}
\begin{aligned}
\p_{j_1, \ldots , j_m}(B_i) &= \p_{j_1, \ldots , j_m} (G^{[s_i]}) \p_{j_1, \ldots , j_m} (\widetilde{B}_i \cap N) \\
&= 
 (\p_{j_1, \ldots , j_m} (G))^{[s_i]} \p_{j_1, \ldots , j_m} \big(\prod_{l=1}^k({B}_i \cap F_l)\big)\\ 
& = 
 (\p_{j_1, \ldots , j_m} (G))^{[s_i]} \p_{j_1, \ldots , j_m} \big(\prod_{t=1}^m({B}_i \cap F_{j_t})\big)\\ 
&=(\p_{j_1, \ldots , j_m} (G))^{[s_i]} X_i\\
&\subseteq \big(\prod_{t=1}^m G_{j_t}^{[s_i]}\big)\big(\prod_{t=1}^m (B_i \cap F_{j_t})\big)\\
&= \prod_{t=1}^m G_{j_t}^{[s_i]}(B_i \cap F_{j_t}) = \prod_{t=1}^m \p_{j_t}(B_i) = Y_i. 
\end{aligned}
\end{equation}
At this point we have proved that 
$$  B_i\cap \prod_{t = 1}^m  F_{j_t}  =X_i \subseteq  \p_{j_1, \ldots , j_m}(B_i) \subseteq Y_i$$
and that
$$
\p_{j_1, \ldots , j_m}(B_i) = (\p_{j_1, \ldots , j_m} (G))^{[s_i]} X_i.
$$
So to complete the proof of the inclusions displayed in (6) it only remains
to establish the existence of $\delta$ such that   
 \begin{equation} \label{new111}
\prod_{t = 1}^m \p_{j_t} (B_i)^{[\delta ]} \subseteq (\p_{j_1, \ldots , j_m} (G))^{[s_i]} X_i.
\end{equation} 
By  (\ref{auxiliar4.1}) and since $K_{j_t,i} \cap F_{j_t} \subseteq B_i \cap F_{j_t}$ (see (\ref{inclusion101}))
$$
\p_{j_t}(B_i)^{[\delta ]}(B_i \cap F_{j_t}) = (G_{j_t}^{[s_i]} (K_{j_t,i} \cap F_{j_t}))^{[\delta ]} (B_i \cap F_{j_t}) = (G_{j_t}^{[s_i ]})^{[\delta]} (B_i \cap F_{j_t}).
$$
Hence, since $\prod_{t = 1}^m (B_i \cap F_{j_t}) \subseteq X_i$, 
 (\ref{new111}) is equivalent to
\begin{equation}\label{e:need}
\prod_{t = 1}^m (G_{j_t}^{[s_i ]})^{[\delta]}   \subseteq (\p_{j_1, \ldots , j_m} (G))^{[s_i]} X_i,
\end{equation}
so we will be done if we can find $\delta$ to do this.

To this end, assume for a moment that we have found  $\delta$ such that 
\begin{equation} \label{lect21}
\prod_{t =1}^m (G_{j_t}^{[s_i ]})^{[\delta]}   \subseteq (\p_{j_1, \ldots , j_m} (G))^{[s_i]} N.
\end{equation}
Then, recalling that we chose $s_i$ so that $G_j^{[s_i]} \subseteq K_{j,i}=\pi_{j}(A_i)$, for all $j$, we would have 
$$
\prod_{t =1}^m (G_{j_t}^{[s_i ]})^{[\delta]}   \subseteq ((\p_{j_1, \ldots , j_m} (G))^{[s_i]} N) \cap \prod_{t = 1}^m K_{j_t,i},$$
and since $N$ is normal in $\prod_{j=1}^kG_j$, the right hand side equals
$$
 (\p_{j_1, \ldots , j_m} (G))^{[s_i]} \big(N \cap \prod_{t = 1}^m K_{j_t,i}\big).
 $$
Finally, (\ref{star12}), (\ref{auxiliar1}) assure us that the second term equals $B_i\cap \prod_{t = 1}^m F_{j_t,i}$, which is $X_i$, so we
have completed the required proof of (\ref{e:need}), modulo assumption (\ref{lect21}), which we shall prove using the following simple fact.

\smallskip 

\noindent{\bf Claim:} Let $Q$ be a finitely generated torsion-free  nilpotent group and 
let $Q_0<Q$ be a subgroup of finite index. Then there exists $\delta$ so that for every natural number $s$,
$$(Q^{[s]})^{[\delta]} \subseteq Q_0^{[s]}.
$$

\smallskip
\noindent{\em Proof of Claim.} 
\iffalse
For abelian groups this is obvious. In the general case, one applies induction on Hirsch length
to deduce the result for $G$ from the result for $G$ modulo its centre (which is again torsion-free \cite{Baumslag}).
\fi 
Since every subgroup in a nilpotent group is subnormal, an obvious
induction on $[Q:Q_0]$ reduces us to the case where $Q_0$ is normal with index a prime number $q$.
We must show that the exponent of the group $D_s = Q^{[s]}/ Q_0^{[s]}$ can be bounded independently of $s$. 
As $D_s$ is of bounded nilpotency class this is equivalent to the exponent of the abelianization of $D_s$ having an upper bound independent
of $s$. This abelianization is finitely generated with at most $d(Q)$ generators and each generator has exponent $q$ since $Q^{[q]} \subseteq Q_0$. This completes the proof of the claim. 

\smallskip
Returning to the proof of (\ref{lect21}),
recall that
by hypothesis, for our fixed integer $m\le k$, each projection of the form
 $\p_{j_1, \ldots , j_m} (G) $ has finite index in 
 $G_{j_1}\times \ldots \times G_{j_m}$. Thus we can apply the Claim with $Q = \prod_{t = 1}^m G_{j_t} N/N$ and $Q_0 = \p_{j_1, \ldots , j_m} (G) N/N$. This
 completes the proof that the promised inclusions in the statement of part (6) of the lemma hold.

Note that (\ref{equ4})
establishes the normality and subquotient properties we were required to prove. The finite index property follows from the fact that every finitely generated nilpotent group of finite exponent is finite.
The additional $p$-power property claimed in the last sentence of the statement is easily verified by following the stages of the construction.

 This completes the proof of the lemma.
 \end{proof}

\subsection{Proposition $\X$ and Proposition $\Y$} \label{subsectionFG}

From now until the end of Section \ref{s6} we fix $G$ to be a group satisfying the assumptions of Theorem \ref{maintheorem}.
We will work exclusively with  a special filtration, i.e. a particular exhausting sequence  $( B_i )$ of finite-index normal
subgroups, constructed to  satisfy the conditions of Lemma
  \ref{l:filter}. (The fact that the index
  $[G : B_i]$ can be a power of a fixed prime $p$ if each $G_i$ is residually $p$ will, however, play no role.) We shall maintain the notation of Lemma \ref{l:filter} (in particular $\delta$
  is the integer whose existence was established there).
  We also fix, for the duration of the section, an arbitrary 
  $$\I = \{j_1,\ldots,j_m\}$$
   with $1 \leq j_1 < \ldots < j_m \leq k$, and  define 
$$
\Gamma_\I := G_{j_1}\times\dots\times G_{j_m}
$$
$$
\Lambda_{i} := (F_{j_1} \cap B_i) \times \ldots \times (F_{j_{m}} \cap B_i),
$$
$$S_i :=  \p_{j_1}(B_i)^{[\delta ]} (B_i \cap F_{j_1}) \times \ldots \times \p_{j_m}(B_i)^{[\delta ]} (B_i \cap F_{j_m}),$$ 
$$
D_i := S_i  / \Lambda_{i}
$$
and 
$$C_i := \p_{\I} (B_i) /  \Lambda_{i}.$$
By Lemma \ref{l:filter} (6), $D_i \subseteq C_i$.

\begin{lemma}\label{newL} There is a positive integer $b$, independent of $\I$,  so that for all $i$,
$$
| C_i / D_i | \leq b. %\hbox{ and all }  1 \leq i_1 < \ldots < i_m \leq k. 
$$ 
\end{lemma}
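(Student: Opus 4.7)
The plan is to bound $|C_i/D_i|$ by embedding it into the larger quotient $\prod_{t=1}^m \p_{j_t}(B_i)/S_i$, which can then be identified with a product of $\delta$th-power quotients of subgroups of a \emph{fixed} finitely generated torsion-free nilpotent group. By Lemma \ref{l:filter}(6), $S_i$ is normal (of finite index) in $\prod_t \p_{j_t}(B_i)$, and hence also in its subgroup $\p_{\I}(B_i)$; the inclusion $\p_{\I}(B_i) \subseteq \prod_t \p_{j_t}(B_i)$ therefore induces an injection
$$
C_i/D_i \;=\; \p_{\I}(B_i)/S_i \;\hookrightarrow\; \prod_t \p_{j_t}(B_i)/S_i,
$$
so it suffices to bound the order of the larger quotient, uniformly in $i$ and in the choice of $\I$.

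Using Lemma \ref{l:filter}(1) to rewrite $B_i \cap \prod_t F_{j_t}$ as $\prod_t (B_i \cap F_{j_t})$, and Lemma \ref{l:filter}(4) to replace each $B_i \cap F_{j_t}$ by $\p_{j_t}(B_i) \cap F_{j_t}$, a direct calculation factor-by-factor gives
$$
\prod_t \p_{j_t}(B_i)\big/S_i \;\cong\; \prod_t Q_t\big/Q_t^{[\delta]},
$$
where $Q_t := \p_{j_t}(B_i) / (\p_{j_t}(B_i) \cap F_{j_t})$ embeds, via the canonical projection $G_{j_t} \to G_{j_t}/F_{j_t}$, as a subgroup of the fixed finitely generated torsion-free nilpotent group $G_{j_t}/F_{j_t}$.

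The one nontrivial step is to bound $|Q/Q^{[\delta]}|$ uniformly as $Q$ ranges over all subgroups of a fixed finitely generated torsion-free nilpotent group $P$. Standard polycyclic-group theory shows that any such $Q$ is itself finitely generated torsion-free nilpotent, with Hirsch length at most $h(P)$ and nilpotency class at most $c(P)$; in particular $Q$ is generated by at most $h(P)$ elements. Hence $Q/Q^{[\delta]}$ is a quotient of the free nilpotent group of rank $h(P)$ and class $c(P)$ modulo $\delta$th powers --- a finitely generated nilpotent group of finite exponent, which is therefore finite, with order depending only on $\delta$, $h(P)$, and $c(P)$, not on $Q$.

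Applying this bound with $P = G_{j_t}/F_{j_t}$ produces a constant $M_{j_t}$ with $|Q_t/Q_t^{[\delta]}| \le M_{j_t}$; since there are only finitely many factors $G_1,\ldots,G_k$ in the ambient direct product, setting $M := \max_{1\le j\le k} M_{j}$ and $b := M^k$ yields $|C_i/D_i| \le M^m \le b$ uniformly in $i$ and $\I$, as required. The main conceptual obstacle --- really the only one --- is securing the uniform finiteness of $Q/Q^{[\delta]}$ over all subgroups of a fixed finitely generated torsion-free nilpotent group, which ultimately comes down to the classical fact that a finitely generated nilpotent group of finite exponent is finite.
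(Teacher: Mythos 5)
Your proof is correct, and it invokes the same underlying finiteness principle as the paper — a finitely generated nilpotent group of bounded rank, bounded nilpotency class and bounded exponent is finite of uniformly bounded order — but it reaches that point via a more elaborate intermediate reduction. The paper's own proof is considerably more direct: Lemma \ref{l:filter}(6) already asserts that $\prod_t \p_{j_t}(B_i)/S_i$ (and hence its subgroup $C_i/D_i$) is a subquotient of the \emph{fixed} finitely generated nilpotent group $\Gamma/N$, which immediately bounds the rank and nilpotency class of $C_i/D_i$; and the exponent of $C_i/D_i$ divides $\delta$ because $\prod_t \p_{j_t}(B_i)^{[\delta]} \subseteq S_i$. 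Your version first embeds $C_i/D_i$ into $\prod_t \p_{j_t}(B_i)/S_i$, then factors this product coordinate-wise, identifies each factor as $Q_t/Q_t^{[\delta]}$ with $Q_t \le G_{j_t}/F_{j_t}$, and finally applies the same rank/class/exponent bound to each factor over all subgroups of the fixed nilpotent group $G_{j_t}/F_{j_t}$. This is perfectly valid and perhaps more self-contained (it does not lean on the ``subquotient of $\Gamma/N$'' clause of Lemma \ref{l:filter}(6), only on the two displayed inclusions together with parts (1) and (4)), at the cost of an extra factorization step that the paper avoids.
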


\begin{proof} Each $C_i/D_i$ is a finite nilpotent group, so to bound its cardinality it is enough to bound the size of a minimal
generating set, the nilpotency class and the exponent.
Lemma \ref{l:filter}(6) tells us that $C_i / D_i$ is   a subquotient of the finitely generated nilpotent group $(G_1 \times \ldots \times G_k) / N$. 
 Since $(G_1 \times \ldots \times G_k) / N$ has finite rank there is an upper bound, independent of $i$, on the number of elements required to
generate $C_i / D_i$. The nilpotency class of $(G_1 \times \ldots \times G_k) / N$ is an upper bound on the nilpotency class of $C_i / D_i$, and  by Lemma \ref{l:filter}(6) the exponent of
 $C_i / D_i$ divides $\delta$. 
 \end{proof}

The proof of Theorem \ref{maintheorem} depends on the following two technical results about asymptotics of homology groups.
Recall that $m=|\I|$.

\medskip
\noindent{\bf Proposition $\X$.} {\it
 For all positive integers $\alpha$ and $q$ with $\alpha + q \leq m-1$  we have
 $$\limii \frac{1}{[\G_\I : S_i]}\dim H_{\alpha}( D_i, H_q(\Lambda_i,K)) = 0 
 $$
 and for $\alpha + q = m$ we have
 $$\limsup_{i\to\infty} \frac{1}{[\G_\I : S_i]}\dim H_{\alpha}( D_i, H_q(\Lambda_i,K))<\infty.
 $$ }

\medskip
\noindent{\bf  Proposition $\Y$.} {\it Suppose that for $\alpha + q \leq m$ we have
 $$\limsup_{i\to\infty} \frac{1}{[\Gamma_\I : \p_\I(B_i)]}\dim H_{\alpha}( C_i
 , H_q(\Lambda_i,K)) < \infty.
 $$
Then for  $\alpha +q  \leq m$
 $$\limii\frac{1}{[G:B_i]} \dim H_{\alpha}( B_i/(N \cap B_i), H_q(\Lambda_i, K)) = 0 .
 $$}

\smallskip

\begin{remark} The hypothesis of Theorem \ref{maintheorem} that the summands $G_j$ are $K$-slow above dimension $1$ enters the proof of 
Proposition $\X$ in an essential way but is not required in the proof of Proposition $\Y$.
\end{remark} 

\subsection{Proposition $\X$ and Proposition $\Y$ imply Theorem \ref{maintheorem}} \label{XYimplyF}

\noindent
Let $C_i$ and $D_i$ be as in subsection \ref{subsectionFG} and let $b$ be the constant of Lemma \ref{newL}. 
The output of Proposition $\X$ controls the homology of $D_i$ while Proposition $\Y$ requires as input control on the homology of $C_i$.
The following lemma bridges this gap.

\begin{lemma} \label{43210} For every $s\in\mathbb N$, there exists a constant $c=c(s,b)$ so that if
 $M$ is a $K C_i$-module with $\dim H_{\alpha}(D_i, M) < \infty$ for all $\alpha \leq s$, then
$$
\dim H_{\alpha}(C_i, M) \leq c \, \max_{t \leq \alpha} \{ \dim H_{t}(D_i, M) \}.
$$
\end{lemma}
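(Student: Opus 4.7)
The plan is to apply the Lyndon--Hochschild--Serre spectral sequence to the short exact sequence
$$1 \to D_i \to C_i \to Q_i \to 1,$$
where $Q_i := C_i/D_i$ is a finite group of order at most $b$ by Lemma \ref{newL}. With coefficients in the $KC_i$-module $M$, the $E^2$-page reads
$$E^2_{p,q} = H_p(Q_i, H_q(D_i, M))$$
and converges to $H_{p+q}(C_i, M)$. The finiteness hypothesis on $H_q(D_i,M)$ for $q\le s$ ensures that every entry with $p+q \le s$ on the $E^2$-page is finite-dimensional over $K$.

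The key auxiliary estimate is a uniform bound of the form
$$\dim_K H_p(Q_i, N) \le b^p \dim_K N$$
for every finite-dimensional $K Q_i$-module $N$. I would obtain this from the bar resolution of $K$ over $KQ_i$: its chain group in degree $p$ is free over $KQ_i$ of rank $|Q_i|^p \le b^p$, so after applying $N \otimes_{KQ_i}(-)$ the chain complex computing $H_\ast(Q_i,N)$ has dimension at most $b^p\dim_K N$ in degree $p$, and the claimed bound follows.

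Specialising to $N = H_q(D_i, M)$ gives $\dim E^2_{p,q} \le b^p \dim H_q(D_i, M)$. Since each $E^\infty_{p,q}$ is a subquotient of $E^2_{p,q}$, for $\alpha \le s$ we obtain
$$\dim H_\alpha(C_i, M) \le \sum_{p+q=\alpha} \dim E^\infty_{p,q} \le \sum_{p+q=\alpha} b^p \dim H_q(D_i,M) \le (\alpha+1)\, b^\alpha \max_{t\le\alpha}\{\dim H_t(D_i, M)\},$$
so $c := (s+1)b^s$ does the job. There is no real obstacle: the only non-cosmetic ingredient is the uniform bar-resolution bound, which is elementary and depends only on $b$, while the remainder is routine spectral-sequence bookkeeping.
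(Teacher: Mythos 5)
Your proof is correct and follows essentially the same route as the paper's: both run the LHS spectral sequence for $D_i \norm C_i$, bound $\dim_K E^2_{p,q}$ by a constant times $\dim_K H_q(D_i,M)$ using a finitely-generated free resolution over $K[C_i/D_i]$, and then sum along the anti-diagonal. The only (cosmetic) difference is that you make the resolution explicit via the bar resolution, yielding the concrete constant $b^p$, whereas the paper fixes an arbitrary finitely-generated free resolution for each of the finitely many groups of order $\le b$ and takes a least upper bound $c_1$; both choices give a constant depending only on $s$ and $b$.
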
 
\begin{proof}
Consider the LHS spectral sequence 
$
H_{\gamma} (C_i / D_i, H_t(D_i, M))$ converging to $H_{\gamma+t}(C_i, M)$.
Since $C_i/ D_i$ is a finite group of order at most $b$ there is a free resolution of the trivial $\mathbb{Z}[C_i / D_i]$-module $\mathbb{Z}$ with finitely generated modules in every dimension. We fix one such resolution for every possible finite group of order at most $b$ and define $c_1$ to be the least upper bound on the number of generators of the free modules up to dimension $s$ in these resolutions.  Then, for all $\gamma \leq s$ we have
$$ \dim H_{\gamma} (C_i / D_i, H_t(D_i, M)) \leq c_1 \dim H_t(D_i, M).
$$
As one passes from the $E_2$ page of the spectral sequence to the $E_\infty$ page, the dimension of the $K$-module in each coordinate does not increase,
so the filtration of $H_{\alpha}(C_i, M)$ corresponding to the antidiagonal $\gamma+t=\alpha$ on the $E_\infty$ page allows us to estimate 
$$
\dim H_{\alpha}(C_i, M) \leq \sum_{\gamma+t = \alpha}  \dim H_{\gamma} (C_i / D_i, H_t(D_i, M)) \leq
c_1  \sum_{\gamma+t = \alpha}  \dim H_t(D_i, M).$$ 
Thus
$$ \dim H_{\alpha}(C_i, M) \leq  (c_1 s)\, \max_{t \leq \alpha} \{ \dim H_{t}(D_i, M) \} \hbox{ for } \alpha \leq s,
$$
and setting $c = c_1 s$ we are done.
\end{proof}

We now turn to the main argument. We are trying to get into a situation where we can apply Lemma \ref{spectral}. From Lemma \ref{newL}
we have 
\begin{equation} \label{equ5}
\frac {[\Gamma_\I : S_i]}{[\G_\I: \p_\I(B_i)]}= [C_i : D_i] \leq b.
\end{equation} 
In the light of this, we can combine 
Proposition $\X$, Proposition $\Y$, using Lemma \ref{43210} with $M = 
 H_q(\Lambda_i, K)$, 
to deduce that for $\alpha
+q  \leq m$,
\begin{equation} \label{eqS1}
\limii \frac{\dim H_{\alpha}(B_i/(N\cap B_i),\, H_q(\Lambda_i, K))}{[G : B_i]} = 0. 
 \end{equation}
Now, by Lemma \ref{l:filter}(1),
 \begin{equation} \label{specialcondition} B_i \cap N = (F_1 \cap B_i) \times \ldots \times (F_k \cap B_i).
 \end{equation} 
Since $F_j \cap B_i$ is a subgroup of a free group $F_j$, it
is free itself and $H_s(F_j \cap B_i, K) = 0 $ for $s \geq
2$. Then by the K\"unneth formula \cite[Thm.~11.31]{Rotman} 
 \begin{equation} \label{qhomology}
 H_{q}(B_i\cap N, K) = 
 \oplus_{\mathcal J}  
 H_1(F_{l_1} \cap B_i, K) \otimes_K \ldots \otimes_K H_1(F_{l_q} \cap B_i, K) 
 \end{equation}
 where the sum is over {\em all} $\mathcal J=\{l_1,\dots,l_q\}$ with $1 \leq l_1 < \ldots < l_{q} \leq k$,
while
\begin{equation} \label{qhomology0}
 H_q(\Lambda_i,K)=  
 \oplus_{\mathcal J\subset\I} 
 H_1(F_{l_1} \cap B_i, K) \otimes_K \ldots \otimes_K H_1(F_{l_q} \cap B_i, K).
 \end{equation} 
 
By (\ref{eqS1}) and (\ref{qhomology0}), for $\mathcal J= \{l_1,
   \ldots,l_q \} \subseteq \I$  
we have
$$
\limii\frac 1{[G : B_i]}
 \dim H_{\alpha}( B_i/N \cap B_i,H_1(F_{l_1} \cap B_i, K) \otimes_K \ldots \otimes_K H_1(F_{l_q} \cap B_i, K) ) = 0. 
$$
As the above holds for all choices of $\I$,  
we deduce from (\ref{qhomology}) that
 $$
\limii\frac 1{[G : B_i]} \dim H_{\alpha}( B_i/N \cap
B_i,H_q(N \cap B_i, K) )   = 0 \hbox{ for all } \alpha + q \leq m. 
 $$
This is the estimate that we need as input for the simple spectral sequence argument Lemma \ref{spectral}, so  the proof of Theorem \ref{maintheorem} is complete.
\qed

\subsection{Proof of Proposition $\Y$} 

In our proof of Proposition $\Y$ we shall need the estimates presented in the following two lemmas. These estimates relate the dimension of homology groups of
nilpotent groups to Hirsch length.
 
 \begin{lemma} \label{boundnilpotenthomology2} For every $q\in\mathbb N$ there is a polynomial $f_q \in \BZ[x]$ 
 so that for every finitely-generated  torsion-free nilpotent group $M$,
 $$
 \dim H_q(M, K) \leq f_q(h(M)),
 $$
 where $h(M)$ is the Hirsch length of $M$.
 \end{lemma}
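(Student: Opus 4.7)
The plan is to induct on the Hirsch length $h = h(M)$, building the polynomials $f_q$ recursively from the Lyndon--Hochschild--Serre spectral sequence of a central infinite cyclic extension.

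The base case $h=0$ is trivial: $M$ is the trivial group, so $\dim H_q(M,K)\le 1$ for all $q$, and any $f_q$ with $f_q(0)\ge 1$ will do. For the inductive step, I will use the standard fact that a finitely generated torsion-free nilpotent group $M$ of Hirsch length $h\ge 1$ admits an isolated central infinite cyclic subgroup $Z\cong\mathbb Z$ such that the quotient $M/Z$ is again finitely generated, torsion-free, nilpotent, and has Hirsch length $h-1$ (obtained by choosing the first nontrivial term of an isolated central series).

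Apply the LHS spectral sequence to $1\to Z\to M\to M/Z\to 1$ with trivial $K$-coefficients. Since $Z$ is central, it acts trivially on $H_q(Z,K)$; since $Z\cong\mathbb Z$, this module is $K$ for $q=0,1$ and zero otherwise. Hence the $E^2$-page has only two nonzero rows and, reading off the antidiagonal $p+q=n$, yields
\begin{equation*}
\dim H_n(M,K)\;\le\;\dim H_n(M/Z,K)+\dim H_{n-1}(M/Z,K).
\end{equation*}
By the inductive hypothesis, the right-hand side is bounded by $f_n(h-1)+f_{n-1}(h-1)$, so it suffices to choose polynomials $(f_q)_{q\ge 0}$ satisfying the recursion $f_q(x)\ge f_q(x-1)+f_{q-1}(x-1)$ with sufficiently large initial values. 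For instance, $f_q(x)=\binom{x+q}{q}$ works: Pascal's rule gives $\binom{x+q}{q}=\binom{x+q-1}{q}+\binom{x+q-1}{q-1}$, and the second summand dominates $\binom{x+q-2}{q-1}=f_{q-1}(x-1)$, so $f_q(x)\ge f_q(x-1)+f_{q-1}(x-1)$ as required.

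The only delicate point is the existence of the central $Z\cong\mathbb Z$ with $M/Z$ torsion-free; this is the one structural fact about torsion-free nilpotent groups that is actually used, and it follows either from the Mal'cev completion or directly by isolating any nontrivial element of the center of $M$ inside $M$. Once that is in hand, the spectral sequence collapses to a two-row calculation and the whole argument is a clean induction, so I do not foresee any substantive obstacle.
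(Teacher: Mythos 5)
Your proof is correct and takes essentially the same approach as the paper: both rely on the collapse of the LHS spectral sequence for central cyclic extensions and ultimately on the existence of a central series in $M$ with all quotients infinite cyclic. The paper runs the whole central series through one iterated spectral-sequence computation and obtains the slightly sharper bound $f_q(h)=\binom{h}{q}$; you run it one step at a time as an induction on Hirsch length, which is arguably cleaner, and you could have matched the paper's bound exactly by taking $f_q(x)=\binom{x}{q}$, since your recursion $f_q(x)\ge f_q(x-1)+f_{q-1}(x-1)$ is then Pascal's rule with equality.
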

 
 \begin{proof}
 Let $N$ be a central normal subgroup of $M$. As in our previous arguments,
 from the LHS spectral sequence for the short exact sequence $1\to N \to M \to M / N\to 1$ we have
 $$
 \dim H_q(M, K) \leq \sum_{\alpha + \beta = q}  \dim H_{\alpha} (M/ N, H_{\beta} (N, K)).
 $$
 As $N$ is central in $M$, the action of $M/ N$ (via conjugation) on $N$ (hence $H_* (N, K)$) is trivial, so
\begin{equation}\label{e:h}
 H_{\alpha} (M/ N, H_{\beta} (N, K)) \cong H_{\alpha} (M/ N, K) \otimes_K H_{\beta} (N, K),
\end{equation}
and
 $$
 \dim H_q(M, K) \leq \sum_{\alpha + \beta = q} \dim (H_{\alpha} (M/ N, K) \otimes_K H_{\beta} (N, K)).
 $$
By \cite[Cor. 2.11]{Baumslag}  
  there is a central series $( M_i )_i$ for $M$
with all quotients $M_i / M_{i-1}$ infinite cyclic.
Arguing by induction on the length $s + 1 = h(M)$ of this series and making repeated applications of (\ref{e:h}),
for $M = M_s$  we have
 $$
 \dim H_q (M, K) \leq $$ $$ \sum_{\alpha_0 + \ldots + \alpha_s = q} \dim (H_{\alpha_s} (M_s / M_{s-1}, K)  \otimes_K \ldots \otimes_K H_{\alpha_i} (M_i/ M_{i-1}, K) \otimes_K  \ldots \otimes_K  H_{\alpha_0} (M_0, K))
 $$
But $M_i/M_{i-1}\cong\Z$, so $
 H_{\alpha_i} (M_i/ M_{i-1}, K) =0$ if $\alpha_i>1$ and is $K$ in dimensions $0$ and $1$. 
Thus, defining $M_{-1}=0$ we have
 $$
 \begin{aligned}
  \dim H_q (M, K) \leq & \sum_{\alpha_0 + \ldots + \alpha_s = q} \prod_{0 \leq i \leq s} \dim  H_{\alpha_i} (M_i/  M_{i-1}, K) \\
  \leq & \sum_{\alpha_i \leq 1, \alpha_0 + \ldots + \alpha_s=  q} 1\\
     = & {{s+1}
  \choose q} = f_q(s+1),\\
  \end{aligned}
  $$
  where  $f_q(x) = x (x-1) \ldots (x-q +1)/ q!$.
 \end{proof}

 \begin{lemma} \label{boundsum} Let $Q$ be a  torsion-free finitely generated nilpotent group, $H$  a subgroup of $Q$, $V$  a $K H$-module and $M$ a normal subgroup of $H$ such that $M$ acts trivially on $V$. 
Then, for every integer $s$ there is a constant $\beta$ depending only on $s$ and
 the Hirsch length
$h(Q)$  such that 
 $$
 \dim H_s(H, V) \leq \beta \sum_{0 \leq p \leq s}  \dim H_{p} (H / M, V).
 $$
 \end{lemma}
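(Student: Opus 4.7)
The plan is to induct on $h(M)$ (which is bounded by $h(Q)$). The base case $h(M)=0$ forces $M=1$ (since $M\le Q$ is torsion-free nilpotent), and the inequality is trivial with $\beta=1$. For the inductive step the key is to produce a nontrivial $M_1\trianglelefteq H$ with $M_1\subseteq M$, $M_1\cong\mathbb{Z}$, and $M_1$ central in $H$. To this end, invoke \cite[Cor. 2.11]{Baumslag} (already used in Lemma \ref{boundnilpotenthomology2}) to fix a central series $1=Q_0\le Q_1\le\ldots\le Q_n=Q$ with $Q_i/Q_{i-1}\cong\mathbb{Z}$, and set $N_i:=M\cap Q_i$. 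Each $N_i$ is normal in $H$ (as the intersection of two $H$-normal subgroups, using $H\le Q$ to see $Q_i\trianglelefteq H$), and the central-series condition $[Q,Q_i]\le Q_{i-1}$ together with the normality of $M$ in $H$ yields $[H,N_i]\le M\cap Q_{i-1}=N_{i-1}$. Hence each $N_i/N_{i-1}$ is central in $H/N_{i-1}$, and embedding into $Q_i/Q_{i-1}\cong\mathbb{Z}$ it is either trivial or infinite cyclic. Take $M_1$ to be the first nontrivial $N_i$.

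Next, apply the Lyndon--Hochschild--Serre spectral sequence to $1\to M_1\to H\to H/M_1\to 1$, with coefficients in $V$. Because $M_1\subseteq M$ acts trivially on $V$ and $M_1$ is central in $H$ (so the conjugation action of $H/M_1$ on $H_*(M_1,K)$ is trivial), the coefficient modules simplify to $H_q(M_1,V)\cong V$ for $q=0,1$ and vanish for $q\ge 2$. Convergence of the spectral sequence then gives
$$\dim H_s(H,V)\le\dim H_s(H/M_1,V)+\dim H_{s-1}(H/M_1,V).$$

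Now apply the inductive hypothesis to the triple $(H/M_1,M/M_1,V)$. The quotient $M/M_1$ injects into $Q/Q_j$ (where $j$ is the index with $M_1=N_j$), which is torsion-free nilpotent of Hirsch length at most $h(Q)-1$, so the hypotheses of the lemma persist with the ambient nilpotent group replaced by something of strictly smaller Hirsch length (in particular $h(M/M_1)=h(M)-1$); combining with the displayed inequality yields the Pascal-type recursion $\beta(s,r)\le\beta(s,r-1)+\beta(s-1,r-1)$, which gives a bound of order $\binom{s+r}{s}$ depending only on $s$ and $r:=h(M)\le h(Q)$. The only real subtlety is that $Q/M_1$ need not sit inside a torsion-free nilpotent group in the shape originally demanded for the ambient $Q$; this is the main (minor) obstacle, and it is handled either by re-formulating the lemma so that the constant depends on the length of the $H$-central series constructed above rather than on $h(Q)$ directly, or equivalently by replacing $Q/M_1$ by the isolator quotient $Q/\sqrt{Q_j}$ within $Q$ and carrying the induction there. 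Either way the statement as given follows immediately.
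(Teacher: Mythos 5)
Your approach is genuinely different from the paper's and is essentially sound, but one of the two ``fixes'' you propose for the flagged subtlety does not actually work, and since you left it at ``either way the statement follows immediately'' the gap needs to be addressed concretely.

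The paper's proof is a single spectral-sequence computation: it runs LHS for $1\to M\to H\to H/M\to 1$, writes $H_q(M,V)=H_q(M,K)\otimes_K V$, filters $H_q(M,K)$ by $KH$-submodules $W_i$ with trivial $H$-action on the quotients (possible because $H$ is nilpotent, hence acts nilpotently on $M$ and on its homology), and bounds each $\dim W_i\cdot\dim H_p(H/M,V)$; Lemma~\ref{boundnilpotenthomology2} then controls $\sum_i\dim W_i=\dim H_q(M,K)$ by a polynomial in $h(M)\le h(Q)$. Your proof instead peels off one central infinite-cyclic factor $M_1\subseteq M$ at a time and inducts on $h(M)$, getting a Pascal-type recursion. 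Both are fine; the paper's is a one-shot estimate and yours is inductive, and yours arguably isolates more transparently why the constant depends only on $s$ and the Hirsch length.

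The gap: in the inductive step, $H/M_1$ need not be torsion-free, and it need not embed in any torsion-free nilpotent group, so the lemma's hypotheses as stated do not persist. Your second repair, ``replacing $Q/M_1$ by the isolator quotient $Q/\sqrt{Q_j}$ and carrying the induction there,'' is not correct: since $Q_j/Q_{j-1}\cong\mathbb Z$ the subgroup $Q_j$ is already isolated, so $\sqrt{Q_j}=Q_j$, and $H/M_1$ does \emph{not} inject into $Q/Q_j$ (take $Q=H=\mathbb Z^2$, $M=2\mathbb Z\times\mathbb Z$, $Q_1=\mathbb Z\times 0$; then $M_1=2\mathbb Z\times 0$ and $H/M_1=(\mathbb Z/2)\times\mathbb Z$ has torsion, while $Q/Q_1\cong\mathbb Z$). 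Your first repair is the correct one and should be made explicit: prove the statement with hypotheses ``$M\trianglelefteq H$ admits an $H$-central series $1=M_0\le M_1\le\cdots\le M_r=M$ with $[H,M_i]\le M_{i-1}$ and $M_i/M_{i-1}\cong\mathbb Z$,'' with $\beta=\beta(s,r)$. The series you build from the $N_i=M\cap Q_i$ has $r\le h(Q)$, and it descends verbatim to $M/M_1\trianglelefteq H/M_1$ with length $r-1$, so the induction closes without any reference to an ambient torsion-free group, and the original statement follows by monotonicity of $\beta$ in $r$.
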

 
 \begin{proof} 
 Consider the LHS spectral sequence
 $$
 E_{p,q}^2 = H_p(H / M, H_q(M, V))
 $$
 converging to $H_{p+q}(H, V)$. Since $M$ acts trivially on $V$ we have $$H_q(M, V) = H_q(M, K) \otimes_K V,$$ 
 where $H/ M$ acts diagonally on the tensor product. 
 Since $H$ is nilpotent, it acts nilpotently on $M$ (by conjugation) and hence $H$ acts nilpotently on $H_q(M, K)$, so there is a
 filtration of $H_q(M, K)$ by $KH$-submodules such that $H$
 acts trivially on the quotients of this filtration;  we denote these sections $W_1, \ldots , W_j$.
 \begin{equation} \label{equa123}
 \dim H_p(H / M, H_q(M, K) \otimes_K V ) \leq \sum_{1 \leq i \leq j} \dim H_p(H/M, W_i \otimes_K V)
 \end{equation}
 and since $H $ acts trivially on $W_i$ we have that, as a $K [H/M]$-module, $W_i \otimes_K V$ is a direct sum of $\dim (W_i)$ copies of $V$. Thus
 \begin{equation} \label{equa124}
   \dim H_p(H/M, W_i \otimes_K V) \leq  \dim W_i \dim H_p(H/M, V).
 \end{equation}
 Note that $$\sum_i \dim W_i = \dim H_q(M, K),$$ so
 \begin{equation} \label{equa125}
 \sum_i  \dim W_i\, \dim H_p(H/M, V) = \dim  H_q(M, K)\ \dim H_p(H/M, V).
 \end{equation}
 Combining (\ref{equa123}), (\ref{equa124}) and (\ref{equa125}) we have
 \begin{equation} \label{inequality}
 \dim E^2_{p,q} = \dim H_p(H / M, H_q(M, K) \otimes_K V ) \leq \dim  H_q(M, K) \dim H_p(H/M, V).
 \end{equation}
 
 By Lemma \ref{boundnilpotenthomology2}, there is a polynomial $f_q(x)$ depending only on $q$, so that the
 dimension of $H_q(M, K)$ is bounded above by $f_q(h(M))$.
 Let $\beta$ be the maximum of $f_q(z)$, where $0 \leq q
 \leq s$ and $0 \leq z \leq h(Q)$.
 Then by (\ref{inequality})
 $$\dim E_{p,q}^2 \leq \dim  H_q(M, K) \dim H_p(H/M, V) \leq \beta \dim H_p(H/M, V).
 $$
 Finally,
 $$
 \dim H_s (H, V) \leq \sum_{p+q = s}  \dim E_{p,q}^2 \leq \sum_{0 \leq p \leq s} \beta \dim H_p(H/M, V).
 $$ 
 
 \end{proof}
\noindent  
{\bf Proof of Proposition $\Y$.}

\noindent 
We saw in Lemma \ref{l:filter} that for $\I=\{j_1,\dots,j_m\}$ we have
\begin{equation} \label{eqeq1}
\p_{\I} (B_i \cap N) =\Lambda_i:=
(F_{j_1} \cap B_i) \times \ldots \times (F_{j_{m}} \cap B_i).
\end{equation}
$\p_\I$ induces a map
  $$
\rho_{\I} :  B_i / (B_i \cap N) \to C_i = \p_{\I}(B_i) / \p_{\I}(B_i \cap N).
 $$
 This map is surjective and we denote its kernel by $M_i$.
 Define $$W_i =  H_q(\Lambda_i, K).$$
Note that $W_i$ is a $C_i$-module via the conjugation action of $\p_{\I}(B_i)$ and $W_i$ is a $ B_i / (B_i \cap N)$-module via  the map $\rho_\I$.
 
These two incarnations of $W_i$ are what we must understand, since the passage from the input of Proposition $\Y$ to the output involves
  changing $H_{\alpha} (C_i, W_i)$ to $H_{\alpha}
   (B_i / (B_i \cap N), W_i)$, changing
   denominators, and sharpening the limit from finite to zero. 
   
  Concerning the denominators, observe that for $P = \prod_{j \notin \I} F_j$ we have 
   $\p_{\I} (P) = 1$, so $\p_{\I} (P B_i) = \p_{\I} (B_i)$. Then, recalling that 
   $\G_\I:= G_{j_1} \times \ldots \times G_{j_{m}}$, we have
   $$[G : P B_i] \geq  [\p_{\I} (G) : \p_{\I} (B_i)]=\frac{[\G_\I : \p_{\I} (B_i)]}
   { [\G_\I : \p_{\I} (G)]}$$
  and so
  $$ 
\begin{aligned}  
  \frac{[\G_\I : \p_{\I} (B_i)]}{ [G : B_i]} &\leq  \frac{[\G_\I : \p_{\I} (G)] [G : P B_i]}{[G : B_i]}\\
  &= \frac{[\G_\I : \p_{\I} (G)]}{[P : B_i \cap P]}.
  \end{aligned}
     $$ 
    We have assumed that $|\I|= m < k$, so $P \not= 1$ and
     \begin{equation} \label{limitlimitnew}
     0 \leq \limii\frac{[\G_\I : \p_{\I} (B_i)]}{[G : B_i]}\leq
      \limii \frac{[\G_{\I} : \p_{\I} (G)]}{[P : B_i \cap P]} = 0.
     \end{equation}
     
 By definition, $M_i$ is the kernel of $\rho_\I$ and hence acts trivially on $W_i$.     
By hypothesis, $G/N$ is torison-free nilpotent and $M_i$ is a subgroup of $B_i / (B_i \cap N) \cong B_i N / N \subseteq G / N$.  
We apply Lemma \ref{boundsum} with $H = B_i / (B_i \cap N)$, $Q = G / N$ and $M = M_i$
to find a constant $\beta$ depending on $h(Q)$ and $m$ so that for  $C_i = H / M_i$ and all $\alpha \leq m$
$$
\dim H_{\alpha}(B_i/(B_i \cap N), W_i) \leq  \beta \sum_{0 \leq p \leq \alpha} \dim H_{p}(C_i, W_i).
$$
Then
\begin{equation} \label{equat111}
\begin{aligned}
\frac{\dim H_{\alpha}(B_i/(B_i \cap N), W_i)}{[G : B_i]} & \leq
 \frac{\beta}{[G
: B_i]}{\sum_{0 \leq j \leq \alpha} \dim H_{j}(C_i, W_i)} \\
& = \beta \ \frac{\sum_{0 \leq j \leq
\alpha} \dim H_{j}(C_i, W_i)}{ [\G_\I : \p_{\I} (B_i)]}\ \frac {[\G_\I : \p_{\I} (B_i)]}{[G: B_i]}.
\end{aligned}
\end{equation}
Since for  $\alpha + q \leq m$ we have
 $$\limsup_{i\to\infty} \frac{\dim H_{\alpha}( C_i, W_i)}{[\G_\I : \p_{\I} (B_i)]} < \infty 
 $$
we obtain by (\ref{limitlimitnew}) and (\ref{equat111}) that
 $$\limii \frac{\dim H_{\alpha}( B_i/N \cap B_i, W_i)}{[G : B_i]} = 0 \hbox{ for } \alpha +q  \leq m
 $$
as required.
\qed

 \subsection{Proof of Proposition $\X$}

We need the following weak form of slowness for nilpotent groups. This is a special case of  \cite[Thm.~0.2(ii)]{LLS}
and can be proved by a straightforward induction on Hirsch length (cf.~Lemma \ref{l:zslow}). 

\begin{lemma} \label{betti0} If $N$ is a finitely generated nilpotent group and $K$ is a field
then $N$ is $K$-slow,  i.e.
for every exhausting normal chain $(B_i )$ and every $m
\geq 0$
$$\limii \frac{\dim H_{m}(B_i, K)}{[N : B_i]} = 0.$$
\end{lemma}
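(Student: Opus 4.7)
The plan is to induct on the Hirsch length $h(N)$. The cases in which $N$ is finite or $m=0$ are immediate: if $N$ is finite then $B_i=\{1\}$ eventually, so $H_m(B_i,K)=0$ for $m\ge 1$; if $N$ is infinite then $[N:B_i]\to\infty$, which settles $m=0$ at once (since $\dim H_0(B_i,K)=1$). So I may assume $N$ is infinite and $m\ge 1$.

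For the inductive step I would fix an infinite cyclic central subgroup $C\cong\BZ$ of $N$, which exists because the centre of any infinite finitely generated nilpotent group has positive Hirsch length. Writing $\bar N=N/C$ and $\bar B_i=B_iC/C$, one has $[N:B_i]=d_i\cdot[\bar N:\bar B_i]$ with $d_i:=[C:B_i\cap C]\to\infty$, since $(B_i\cap C)$ is a descending chain of finite-index subgroups of $\BZ$ with trivial intersection. The Lyndon--Hochschild--Serre spectral sequence for
$$1\to B_i\cap C\to B_i\to\bar B_i\to 1$$
with trivial $K$-coefficients has only two non-zero rows (because $B_i\cap C\cong\BZ$), and the action of $\bar B_i$ on $H_*(B_i\cap C,K)$ is trivial because $C$ is central; this gives
$$\dim H_m(B_i,K)\ \le\ \dim H_m(\bar B_i,K)\ +\ \dim H_{m-1}(\bar B_i,K).$$

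The remaining task is to bound $\dim H_m(\bar B_i,K)$ uniformly in $i$. The chain $(\bar B_i)$ in $\bar N$ need not be exhausting (e.g.\ $\bigcap_i B_iC$ can properly contain $C$), so the inductive hypothesis does not apply directly. Instead I would note that each $\bar B_i$ is a finite-index subgroup of $\bar N$, which is finitely generated nilpotent of Hirsch length $h(N)-1$: choosing once and for all a torsion-free subgroup $\bar N_0$ of finite index in $\bar N$, applying Lemma \ref{boundnilpotenthomology2} to the torsion-free nilpotent group $\bar B_i\cap\bar N_0$, and running an LHS argument for the bounded-order quotient $\bar B_i/(\bar B_i\cap\bar N_0)$ produces a constant $C_m$, depending only on $m$ and $h(\bar N)$, such that $\dim H_m(\bar B_i,K)\le C_m$ for every $i$. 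Dividing,
$$\frac{\dim H_m(B_i,K)}{[N:B_i]}\ \le\ \frac{C_m+C_{m-1}}{d_i[\bar N:\bar B_i]}\ \le\ \frac{C_m+C_{m-1}}{d_i}\ \longrightarrow\ 0,$$
closing the induction. The main obstacle will be the uniform upper bound on $\dim H_m(\bar B_i,K)$ in the face of a possibly degenerate quotient chain; once that is supplied by Lemma \ref{boundnilpotenthomology2}, the rest is routine.
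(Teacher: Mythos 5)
Your argument is correct and follows the same route the paper sketches: a central cyclic $\BZ$, the two-row LHS spectral sequence for $1\to B_i\cap C\to B_i\to\bar B_i\to 1$, and the observation that $[C:B_i\cap C]\to\infty$; the paper only gestures at this via ``straightforward induction on Hirsch length (cf.\ Lemma \ref{l:zslow})'' and a citation to \cite{LLS}. You also correctly flag the one genuine subtlety --- that the quotient chain $(\bar B_i)$ need not be exhausting, so a literal induction on Hirsch length would stall --- and you supply the right fix by replacing the inductive hypothesis with the uniform bound on $\dim H_m(\bar B_i,K)$ furnished by Lemma \ref{boundnilpotenthomology2} together with a bounded-order LHS argument as in Lemma \ref{43210}; note that once this bound is in place the ``induction'' is really a direct argument.
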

 
The direct summands $G_j$ in the statement of Theorem \ref{maintheorem} are assumed to satisfy
the hypotheses of the following lemma.  

 \begin{lemma} \label{lemma(s-1,1)} Let $K$ be a field, let $G$ be a finitely generated
 residually finite group that is $K$-slow above dimension $1$
 and let $F$ be a normal subgroup of $G$ such that $G/ F$ is nilpotent and $F$ is free. Let $( L_i )_{i \geq 1}$ be an exhausting sequence of normal subgroups of finite index in $G$ such that $\bigcap_i  L_i F  = F$.
 Then, for $q \geq 1$ 
 $$\limii \frac 1{[G : L_i]}\dim H_{q} (L_i / F \cap L_i, (L_i \cap F)^{ab} \otimes_{\BZ} K)  = 0,
 $$
 and
 $$\limsup_{i\to\infty} \frac 1{[G : L_i]}\dim H_{0} (L_i / F \cap L_i, (L_i \cap F)^{ab} \otimes_{\BZ} K)  <\infty.
 $$
 
 \end{lemma}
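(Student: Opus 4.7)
The plan is to analyse the LHS spectral sequence associated to the short exact sequence
$1 \to L_i \cap F \to L_i \to Q_i \to 1$, where $Q_i := L_i/(L_i \cap F)$. The composition $L_i \hookrightarrow G \twoheadrightarrow G/F$ identifies $Q_i$ with the normal subgroup $L_iF/F$ of finite index in the finitely generated nilpotent group $G/F$, and the hypothesis $\bigcap_i L_iF = F$ shows that $(Q_i)$ is an exhausting normal chain in $G/F$. Since $L_i \cap F$ is a subgroup of the free group $F$, it is itself free, so its $K$-homology is concentrated in degrees $0$ and $1$. Consequently the $E^2$-page has only two non-zero rows,
\[
E^2_{p, 0} = H_p(Q_i, K), \qquad E^2_{p, 1} = H_p(Q_i, A_i),
\]
where $A_i := (L_i \cap F)^{ab} \otimes_\BZ K = H_1(L_i \cap F, K)$ carries the standard conjugation action of $Q_i$.

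For any two-row homological spectral sequence, every differential $d_r$ with $r \geq 3$ vanishes for dimensional reasons, so the spectral sequence degenerates at $E^3$. Splicing the resulting two-term filtrations of $H_n(L_i, K)$ produces the long exact sequence
\[
\cdots \to H_{n+1}(Q_i, K) \xrightarrow{d_2} H_{n-1}(Q_i, A_i) \to H_n(L_i, K) \to H_n(Q_i, K) \xrightarrow{d_2} H_{n-2}(Q_i, A_i) \to \cdots,
\]
and setting $n = q+1$ yields the key estimate
\[
\dim H_q(Q_i, A_i) \;\leq\; \dim H_{q+1}(L_i, K) + \dim H_{q+2}(Q_i, K).
\]

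For $q \geq 1$ I would divide by $[G : L_i]$ and show both summands on the right vanish in the limit. The first ratio tends to $0$ by the hypothesis that $G$ is $K$-slow above dimension $1$, since $q+1 \geq 2$. For the second, note that $[G/F : Q_i] = [G : L_i F] \leq [G : L_i]$, so
\[
\frac{\dim H_{q+2}(Q_i, K)}{[G : L_i]} \;\leq\; \frac{\dim H_{q+2}(Q_i, K)}{[G/F : Q_i]},
\]
and the right-hand side tends to $0$ by Lemma \ref{betti0} applied to the exhausting chain $(Q_i)$ in $G/F$. This proves that $\dim H_q(Q_i, A_i)/[G : L_i] \to 0$, which is the first assertion of the lemma.

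For $q = 0$ the $H_2(Q_i, K)$ contribution is again negligible by the same reasoning. The remaining term is controlled by the standard Reidemeister--Schreier bound $\dim H_1(L_i, K) \leq d(L_i) \leq [G : L_i]\,d(G)$, so $\dim H_1(L_i, K)/[G : L_i] \leq d(G)$ and hence $\limsup_i \dim H_0(Q_i, A_i)/[G : L_i] \leq d(G) < \infty$. The only genuinely delicate step is the derivation of the two-row long exact sequence in exactly the shape indicated; once that is in hand, every estimate reduces transparently to the $K$-slowness of $G$ above dimension $1$ and of the nilpotent group $G/F$.
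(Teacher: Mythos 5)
Your proof is correct and takes essentially the same approach as the paper's: you derive the key estimate $\dim H_q(Q_i, A_i) \le \dim H_{q+1}(L_i, K) + \dim H_{q+2}(Q_i, K)$ from the two-row LHS spectral sequence (phrased via the Wang long exact sequence, whereas the paper writes $\dim E^2_{q,1} \le \dim E^3_{q,1} + \dim E^2_{q+2,0}$ directly — these are the same inequality), and then you combine $K$-slowness of $G$ above dimension~$1$ with Lemma~\ref{betti0} applied to the exhausting chain $(Q_i)$ in the nilpotent quotient $G/F$, exactly as the paper does.
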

 
 \begin{proof}
 $K$-slowness means that for $q\ge 2$
 \begin{equation}\label{eslow}  \limii \frac{\dim H_q(L_i, K)}{[G : L_i]} = 0.
\end{equation}
And since $\dim H_1(L_i, K)$ is bounded above by $d(L_i)$, the number of generators required to generate $L_i$,
\begin{equation}\label{1case}
\limsup_{i\to\infty} \frac{\dim H_1(L_i, K)}{[G : L_i]} <\infty.
\end{equation}

The group that we must understand is
$$H_{q} (L_i / (F \cap L_i),\, (L_i \cap F)^{ab} \otimes_{\BZ} K) = H_{q} (L_i /(F \cap L_i),\, H_1(L_i \cap F, K))$$
which is the $E^2_{q,1}$ term of the LHS  spectral sequence
 $$
 E^2_{\alpha, \beta} = H_{\alpha} (L_i / (L_i \cap F), \, H_{\beta}(L_i \cap F, K))
 $$
 converging via $\alpha$ to $H_{\alpha + \beta}(L_i, K)$. We denote the differentials
 $$d^k_{\alpha, \beta} : E_{\alpha, \beta}^k \to  E_{\alpha -k, \beta +k-1}^k.
 $$
 Since $F$ is free $E^k_{\alpha, \beta} = 0$ for $ \beta \notin \{ 0, 1 \}$, so the sequence stabilizes on the $E^3$ page 
 and 
$$E^\infty_{q,1} = E^3_{q,1} = \coker d^2_{q+2,0}$$
is a direct summand of $H_{q+1}(L_i,K)$. Thus
\begin{equation}\label{ekey}
\begin{aligned}
\dim E^2_{q,1}  &\le \dim E^3_{q,1} + \dim E^2_{q+2,0}\\
& \le \dim  H_{q+1}(L_i,K) + \dim H_{q+2}(\bar{L}_i,K),
\end{aligned}
\end{equation}
where  
 $\bar{L}_i:=L_iF/F  \cong L_i / (L_i \cap F)$. By hypothesis, $(\bar{L}_i)$   is an exhausting sequence of normal subgroups of 
 finite index in $G/F$, so by Lemma \ref{betti0} for $s \geq 0$ we have
\begin{equation}\label{equo}
\limii \frac{\dim H_s(\bar{L}_i, K)}{[G/F :\bar{L}_i]} = 0.
\end{equation}
 Now, $[G/F :\bar{L}_i] \le [G:L_i]$, so dividing through (\ref{ekey}) by $[G:L_i]$ and letting $i\to\infty$, from (\ref{eslow}) and (\ref{equo})
 for $q\ge 1$ we get
 $$
 \limii \frac{\dim E^2_{q,1}}{[G:L_i]} =0,
 $$
 while for $q=0$ using (\ref{1case}) we get
 $$
 \limsup_{i\to\infty}  \frac{ \dim E^2_{0,1}}{[G:L_i]} \le 
\limsup_{i\to\infty} \frac{\dim H_1(L_i, K)}{[G : L_i]} <\infty,
$$
as required.
\end{proof}

  \medskip
\noindent{\bf Proof of Proposition $\X$}

\medskip To simplify the notation, we relabel so that $\I=\{1,\dots,m\}$, and for each $j\in\I$ we define
$$A_{j,i} := \p_j(B_i)^{[\delta ]} (B_i \cap F_j).$$
Recall from (\ref{eqAB1}) that $\p_j(B_i) = G_j^{[s_i]} (B_i \cap F_j)$ and hence
$$
A_{j,i} =  (G_j^{[s_i]} (B_i \cap F_j))^{[\delta ]} (B_i \cap F_j) = (G_j^{[s_i]})^{[ \delta]} (B_i \cap F_j)
$$
is a normal subgroup of finite index in $G_j$. From Lemma \ref{l:filter} (5) and (6) we have
\begin{equation} \label{intersection4}  \bigcap_{i \geq 1} A_{j,i} F_j = F_j \hbox{ for all }1
\leq j \leq k \end{equation} 
and
\begin{equation} \label{equ10}
  A_{j,i} \cap F_j  =  B_i \cap F_j.
  \end{equation}
And the notation that we used to state Proposition $\X$ was (dropping the
subscript $\I$)
$$
\G=G_1\times\dots\times G_m, \ \  \Lambda_i = (A_{1,i} \cap F_1) \times \ldots \times (A_{m,i} \cap F_m),
$$
$$
S_i = A_{1,i} \times \ldots \times A_{m,i},\ \text{ and } D_i = S_i/\Lambda_i.
$$
We shall prove Proposition $\X$ be examining the LHS spectral sequences for the short exact sequences
$1\to \Lambda_i \to S_i\to D_i\to 1$. The $E^2_{\alpha,\beta}$ term of this spectral sequence is
$$
\cE_i(\alpha,\beta) :=   H_{\alpha} (D_i, H_{\beta} (\Lambda_i, K))
  $$
 and what we must prove is that
$$
 \limii \frac{\cE_i(\alpha,\beta)}{[\Gamma:S_i]} = 0 \hbox{ for } \alpha +
\beta \leq m-1
$$
and for $\alpha+\beta=m$
$$ \limsup_{i\to\infty} \frac{\cE_i(\alpha,\beta)}{[\Gamma:S_i]} <\infty.
$$
We shall always assume that $\beta \leq m$.
  The $F_j$, being free, have homological dimension $1$, so by K\"unneth formula 
  $$H_{\beta} (\Lambda_i, K) \cong  \oplus_{1 \leq j_1 < j_2 < \ldots < j_{\beta} \leq m }
  (A_{j_1,i} \cap F_{j_1})^{ab} \otimes \ldots \otimes
  (A_{j_{\beta},i} \cap F_{j_{\beta}} )^{ab} \otimes K.
  $$
  (Here, and throughout, tensor products are over $\mathbb{Z}$ unless indicated otherwise.)
 Thus
  $$\cE_i(\alpha, \beta) \cong \oplus_{1 \leq j_1 < j_2
    < \ldots < j_{\beta} \leq m } W_{\alpha, j_1, \ldots ,
    j_{\beta},i}, $$
where
  $$
  W_{\alpha, j_1, \ldots , j_{\beta},i} = H_{\alpha} (D_i, (A_{j_1,i} \cap F_{j_1})^{ab} \otimes \ldots \otimes (A_{j_{\beta},i} \cap F_{j_{\beta}})^{ab} \otimes K).
  $$
  We will be done if we can show, for fixed $\alpha\ge 0$, that
  $$
 \limii \frac{\dim W_{\alpha, j_1, \ldots , j_{\beta},i}}{[\Gamma:S_i]} = 0   \hbox{ for } \beta < m
 $$
 and 
 $$\limsup_{i\to\infty}   \frac{\dim W_{\alpha, j_1, \ldots , j_{\beta},i}}{[\Gamma:S_i]}  < \infty \hbox{ for } \beta = m.
 $$
 To prove this
  let us fix one sequence $1 \leq j_1 < j_2 < \ldots < j_{\beta} \leq m $.
  Without loss of generality we can assume that $j_i = i$ for all $ 1 \leq i \leq \beta$.
  Define
  $$R_{1,i} = A_{1,i} / (A_{1,i} \cap F_1) \times \ldots \times A_{\beta,i} / (A_{\beta,i} \cap F_{\beta})$$  and $$ R_{2,i} = 
   A_{\beta+1,i} / (A_{\beta+1,i} \cap F_{\beta+1}) \times \ldots \times A_{m,i} / (A_{m,i} \cap F_m).
   $$
   Set $$V_i = (A_{1,i} \cap F_1)^{ab} \otimes \ldots \otimes (A_{{\beta},i} \cap F_{{\beta}})^{ab} \otimes K.$$
We will need the following generalised version of the  K\"unneth formula: for $k=1,2$, let
$T_k$ be a group and let $M_k$ be a $K[T_k]$-module, then
$$
H_{\alpha} (T_1 \times T_2, M_1 \otimes_K M_2) = 
\oplus_{\alpha_1 + \alpha_2 = \alpha} H_{\alpha_1} (T_1, M_1) \otimes_K H_{\alpha_2}(T_2, M_2).
$$
To prove this formula, one takes a deleted projective resolution ${\mathcal P}_k$  of $M_k$ as a $K[T_k]$-module 
and observes that the complex ${\mathcal P}_1 \otimes_K {\mathcal P}_2$ is a deleted projective resolution of $M_1 \otimes_K M_2$ as a $K[\ T_1 \times T_2]$-module (see  \cite[Thm.~10.81]{Rotman} for details).

With this formula in hand, we have
   \begin{equation} \label{dimkunneth}
   W_{\alpha, 1, \ldots , {\beta},i} = H_{\alpha} (R_{1,i} \times R_{2,i}, V_i) =
   \oplus_{\alpha_1 + \alpha_2 = \alpha} H_{\alpha_1} (R_{1,i}, V_i) \otimes_K H_{\alpha_2}(R_{2,i}, K)
   \end{equation}
   and if we define $\Sigma_{k_1, \ldots , k_{\beta},i}$ to be
$$H_{k_1} ( A_{1,i} / (A_{1,i} \cap F_1), (A_{1,i} \cap
    F_1)^{ab} \otimes K ) \otimes_K \ldots \otimes_K
    H_{k_{\beta}} ( A_{\beta,i} / (A_{\beta,i} \cap
    F_{\beta}), (A_{\beta,i} \cap F_{\beta})^{ab} \otimes K ) $$
  then
 \begin{equation} \label{oxford3}
    H_{\alpha_1} (R_{1,i}, V_i)  \cong \oplus_{k_1 + \ldots + k_{\beta} = \alpha_1} \Sigma_{k_1, \ldots , k_{\beta},i}.
 \end{equation}
  Thus
$$  \dim  H_{\alpha_1} (R_{1,i}, V_i)  = $$
 \begin{equation} \label{dimkunneth2}
\sum_{k_1 + \ldots +
    k_{\beta} = \alpha_1}  \prod_{1 \leq t \leq \beta} \dim
  H_{k_t} ( A_{t,i} / (A_{t,i} \cap F_t), (A_{t,i} \cap
  F_t)^{ab} \otimes K ). 
  \end{equation}
  Lemma \ref{lemma(s-1,1)} assures us that
\begin{equation}\label{oxford4}
   \limsup_{i\to\infty} \frac{\prod_{1 \leq t \leq \beta}
   \dim H_{k_t} ( A_{t,i} / (A_{t,i} \cap F_t), (A_{t,i}
   \cap F_t)^{ab} \otimes K ))}{[G_1 \times \ldots \times G_{\beta} : A_{1,i} \times \ldots \times A_{\beta,i} ]} <\infty.
   \end{equation}
   And by combining (\ref{dimkunneth2}) and (\ref{oxford4}) we deduce that
   \begin{equation} \label{oxford5}
   \limsup_{i\to\infty} \frac{\dim  H_{\alpha_1} (R_{1,i}, V_i)}{[G_1 \times \ldots \times G_{\beta} : A_{1,i} \times \ldots \times A_{\beta,i} ]} < \infty.
   \end{equation}
   If $\beta = m$ then $R_{2,i} = 1$, so (\ref{oxford5}) completes the proof in this case.
   
   \medskip
   
   Henceforth we assume that $\beta < m$. In this case, $R_{2,i}$ is non-trivial. Indeed, by 
  (\ref{equ10})
$$R_{2,i}\cong (A_{\beta+1,i} F_{\beta+1} /  F_{\beta+1}) 
\times \ldots \times (A_{m,i}F_{m} /  F_{m})$$ is a normal subgroup of finite index in the infinite nilpotent group $\widetilde{N}=(G_{\beta+1} / F_{\beta+1}) \times \ldots \times 
   (G_m / F_m)$. Moreover (\ref{intersection4}) tells us that these subgroups of finite index exhaust $\widetilde{N}$, so we can apply  Lemma \ref{betti0}  to deduce that
   $$
    \limii \dim  H_{\alpha_2} (R_{2,i},
    K) /  [G_{\beta +1} \times \ldots \times G_{m} :
    (A_{\beta+1,i} F_{\beta+1}) \times \ldots \times (A_{m,i}F_{m}) ] = 0 $$
    for every $\alpha_2 \geq 0$, hence
    \iffalse
Then since $$ \dim  H_{\alpha_2} (R_{2,i},
    K) /  [G_{\beta +1} \times \ldots \times G_{m} :
    A_{\beta+1,i} \times \ldots \times
    A_{m,i} ] \leq $$ $$   \dim  H_{\alpha_2} (R_{2,i},
    K) /  [G_{\beta +1} \times \ldots \times G_{m} :
    (A_{\beta+1,i} F_{\beta+1}) \times \ldots \times
    (A_{m,i}F_{m}) ]$$
we get that
\fi
\begin{equation} \label{equ175} \limii \dim  H_{\alpha_2} (R_{2,i},
    K) /  [G_{\beta +1} \times \ldots \times G_{m} :
    A_{\beta+1,i}  \times \ldots \times
    A_{m,i} ] = 0.\end{equation}
    
We now have all that we need to complete the proof. From    (\ref{dimkunneth}) we have
$$
\dim W_{\alpha,1,\dots ,\beta,i} = \sum_{\alpha_1+\alpha_2=\alpha} \dim H_{\alpha_1}(R_{1,i}, V_i). \dim  H_{\alpha_2}(R_{2,i}, K).
$$
We divide both sides by 
$$
[G_1\times\dots\times G_m : A_{1,i} \times \ldots \times A_{m,i}] = \prod_{j=1}^m [G_j : A_{j,i}] = \big(\prod_{j=1}^\beta [G_j : A_{j,i}] \big)\big(\prod_{j=\beta+1}^m [G_j : A_{j,i}] \big)
$$
and let $i\to\infty$. We proved that in (\ref{oxford5}) that, when normalised by $\prod_{j=1}^\beta [G_j : A_{j,i}]$
the terms involving $R_{1,i}$ remain bounded, while (\ref{equ175}) assures us that, when normalised by $\prod_{j=\beta+1}^m [G_j : A_{j,i}]$, the terms
involving $R_{2,i}$ tend to zero. Thus
$$
\limii \frac{\dim  W_{\alpha,1,\dots ,\beta,i}}{[G_1\times\dots\times G_m : A_{1,i} \times \ldots \times A_{m,i}]}=0
$$
as required, and the proofs of Proposition $\X$ and Theorem \ref{maintheorem} are complete.
\qed 

\section{Proof of Theorem \ref{t:rf}}

We recall the statement of Theorem \ref{t:rf}.

\begin{theorem} Let $m\ge 2$ be an integer, let  $G$ be a residually free group of type $\rm{FP}_m$, and 
let $\rho$ be the largest integer such that $G$ contains a direct product of $\rho$
non-abelian free groups.
Then, there exists an exhausting sequence   $(B_n)$
so that for all fields $K$,
\begin{enumerate}
\item
if $G$ is not of type  $\rm{FP}_\infty$, then 
$
\lim_{n}\frac{\dim H_i(B_n, K)}{[G\colon B_n]} = 0$ for all $0\le i\le m;$

\item 
if $G$ is of type $\rm{FP}_\infty$ then for all $j\ge 1$,
$$
\lim_{n\to\infty}\frac{\dim H_j(B_n, K)}{[G\colon B_n]} =  
\begin{cases} (-1)^\rho \chi(G)&\text{if $j=\rho$}\\
0&\text{otherwise}.\\
\end{cases}
$$ 
\end{enumerate}
\end{theorem}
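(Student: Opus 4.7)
The plan is to deduce Theorem \ref{t:rf} from the structure theory of residually free groups assembled in Section \ref{s:limit}, using Theorem \ref{maintheorem} for part (1) and a direct K\"unneth calculation driven by Corollary A of Theorem \ref{t:limit} for part (2).

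For part (1), where $G$ is of type $\rm{FP}_m$ but not $\rm{FP}_\infty$: By Theorem \ref{t:bhms}, $G$ embeds in a direct product of limit groups $D = \Lambda_1 \times \dots \times \Lambda_k$, and after replacing each $\Lambda_i$ by $\pi_i(G)$ (still a limit group) I may assume $G$ is a full subdirect product, with non-abelian limit groups as factors (absorbing any abelian factors by a standard reduction). Theorem \ref{t:desi2} then guarantees that the projection of $G$ to each $m$-tuple of factors has finite index. If $k \leq m$, then applying Theorem \ref{t:desi2} with $s = k$ would make $G$ of finite index in $D$, and Theorem \ref{t:infty} would then make $G$ of type $\rm{FP}_\infty$, contradicting the hypothesis; hence $k > m$. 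Each $\Lambda_i$ is residually finite of type $\rm{F}$, contains a normal free subgroup with torsion-free nilpotent quotient (Theorem \ref{t:desi1}), and is slow above dimension $1$, hence $K$-slow above dimension $1$ for every field $K$ (Theorem \ref{t:slowLimit}). Theorem \ref{maintheorem} now applies and produces the required exhausting normal chain. Since Lemma \ref{l:filter} constructs the chain without reference to $K$, a single chain works for all fields simultaneously.

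For part (2), where $G$ is of type $\rm{FP}_\infty$: By Theorem \ref{t:infty}, $G$ has finite index in $D = \Lambda_1 \times \dots \times \Lambda_k$ with each $\Lambda_i$ a limit group. Reorder so that $\Lambda_1, \dots, \Lambda_{\rho_0}$ are the non-abelian factors and $\Lambda_{\rho_0+1}, \dots, \Lambda_k$ are the free abelian ones. I first identify $\rho_0$ with the invariant $\rho$ of the theorem statement: the bound $\rho \geq \rho_0$ follows because $G \cap \Lambda_i$ has finite index in $\Lambda_i$ and contains a non-abelian free subgroup when $\Lambda_i$ is non-abelian, and the internal direct product of these subgroups within $G$ exhibits $\rho_0$ commuting non-abelian free subgroups; conversely $\rho \leq \rho_0$ follows from the fact that no limit group contains $F_2 \times F_2$, so commuting non-abelian free subgroups of $D$ must come from distinct non-abelian factors.

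Next I build the chain. Let $H \triangleleft D$ be the normal core of $G$ in $D$. Since $H$ contains $\prod_i (H \cap \Lambda_i)$ (the elements $(1,\dots,h_i,\dots,1)$ lie in $H$ because $h_i \in H \cap \Lambda_i \subseteq H$), I pick exhausting normal chains $(L_{i,n})$ of $\Lambda_i$ with $L_{i,n} \subseteq H \cap \Lambda_i$ and set $B_n = L_{1,n} \times \dots \times L_{k,n} \subseteq H \subseteq G$. Each $B_n$ is normal in $D$, hence in $G$, and $(B_n)$ is exhausting. Using $[D:B_n] = [D:G][G:B_n]$ and the K\"unneth formula,
\begin{equation*}
\frac{\dim H_j(B_n, K)}{[G:B_n]} = [D:G] \sum_{j_1 + \dots + j_k = j} \prod_{i=1}^k \frac{\dim H_{j_i}(L_{i,n}, K)}{[\Lambda_i : L_{i,n}]}.
\end{equation*}
For non-abelian $\Lambda_i$, Corollary A gives that the $i$-th factor tends to $-\chi(\Lambda_i)$ if $j_i = 1$ and to $0$ otherwise; for abelian $\Lambda_i = \mathbb{Z}^{n_i}$, the numerator $\binom{n_i}{j_i}$ is bounded while $[\Lambda_i : L_{i,n}] \to \infty$, so the factor tends to $0$ for every $j_i$. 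Consequently the product tends to zero unless $\rho = k$ and every $j_i = 1$, which forces $j = \rho$; in that case the limit equals $\prod_{i=1}^\rho (-\chi(\Lambda_i)) = (-1)^\rho \chi(D)$, and multiplication by $[D:G]$ yields $(-1)^\rho \chi(G)$, as claimed. The main obstacles---identifying $\rho$ with $\rho_0$ and producing a chain whose homology is computable in $D$---are both resolved by working with the normal core $H$, after which the rest is a routine asymptotic K\"unneth calculation powered by Corollary A.
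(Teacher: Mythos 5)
Your part (2) is correct and essentially the paper's argument: both use Theorem \ref{t:infty} plus K\"unneth plus Corollary A (the paper cites ``Corollary B'', which appears to be a slip), and your explicit chain $B_n=L_{1,n}\times\cdots\times L_{k,n}$ built through the normal core, together with your careful identification of $\rho$ with the number of non-abelian factors via commutative transitivity, is a legitimate and somewhat tidier presentation. Your observation in part (1) that $k>m$ when $G$ is not $\rm{FP}_\infty$ is correct and is a hypothesis-check that the paper's exposition glosses over.

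However, part (1) has a genuine gap: you assert that you may assume all factors are non-abelian, ``absorbing any abelian factors by a standard reduction.'' No such reduction exists. If $\Lambda_1\cong\Z^n$, then $Z:=G\cap\Lambda_1$ is a nontrivial central free-abelian subgroup of $G$, but $G$ is neither equal to its image in $\Lambda_2\times\cdots\times\Lambda_k$ nor a direct product $Z\times G'$ with $G'$ a subdirect product of the remaining factors. Moreover, Theorem \ref{t:desi2}, which you need in order to verify the finite-index-of-projections hypothesis of Theorem \ref{maintheorem}, is stated and proved only for subdirect products of \emph{non-abelian} limit groups, so it is not available in the presence of an abelian factor. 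The paper closes this gap with Lemma \ref{l:zslow}: when $G$ has a nontrivial centre, $\chi(G)=0$, and a direct LHS spectral sequence argument applied to a central extension by $\Z$ (iterated up a central copy of $\Z^n$) shows that all homology growth ratios up to degree $m$ vanish along a suitably constructed chain --- consistent with the theorem's formula since the target limits are all zero in this case. Once the abelian case is handled this way, the remainder of your part (1) matches the paper.
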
  

Theorem \ref{t:bhms} allows us to regard an arbitrary finitely presented residually free group $G$ as a full subdirect product of limit groups
$
G< G_1\times\dots G_k$ and Theorem \ref{t:desi1} tells us that the $G_i$ are free-by-(torsion free nilpotent) as required in Theorem \ref{maintheorem}.
If the $G_i$ are all non-abelian, then item (1) of the above theorem is immediate consequence of  Theorem \ref{maintheorem}
and Theorem \ref{t:desi2}. If one of the factors $G_i$ is abelian, then the intersection of $G$ with this factor is central and free abelian,
so the Euler characteristic of $G$ is zero and the limits in the statement of the theorem are also zero, by virtue of the following
simple lemma. (Note that if $G$ is of type $\rm{F}_m$ then so is the quotient of $G$ by any finitely generated normal abelian subgroup.)

\begin{lemma}\label{l:zslow} Let $1\to Z\to G\overset{p}\to Q\to 1$ be a central extension, with $Z\cong\Z$ and both
$G$ and $Q$ residually finite of type ${\rm{F}}_m$. Then, for all fields $K$ there is an exhausting normal chain $({B}_n)$  in $G$ 
by finite-index normal subgroups so that for all $s\le m$
$$
\limn \frac{\dim H_s(B_n, K)}{[G:B_n]}=0.
$$
\end{lemma}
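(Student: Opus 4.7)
The plan is to let $(B_n)$ be any exhausting chain of finite-index normal subgroups of $G$, which exists by residual finiteness, and to analyse the Lyndon--Hochschild--Serre spectral sequence associated to the restriction of the central extension, namely $1 \to Z_n \to B_n \to \bar{B}_n \to 1$, where $Z_n := Z \cap B_n$ and $\bar{B}_n := B_n/Z_n$. Since $\bar{B}_n$ is isomorphic to a finite-index subgroup of $Q$, we have the index identity
$$[G:B_n] \;=\; [Z:Z_n]\cdot [Q:\bar{B}_n].$$
The key observation is that $[Z:Z_n]\to\infty$: indeed $Z\cong\mathbb{Z}$ and $\bigcap_n Z_n \subseteq \bigcap_n B_n = \{1\}$, so the nested sequence of subgroups $Z_n \le \mathbb{Z}$ cannot stabilise at any nontrivial subgroup.

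In the LHS spectral sequence $E^2_{p,q} = H_p(\bar{B}_n, H_q(Z_n,K))\Rightarrow H_{p+q}(B_n,K)$, two features combine to give a cheap estimate. First, $Z_n\cong\mathbb{Z}$ has (co)homological dimension $1$, so only the rows $q=0,1$ are non-zero. Second, $Z_n$ is central in $B_n$, so the action of $\bar{B}_n$ on $H_q(Z_n,K)\cong K$ is trivial for $q\in\{0,1\}$; hence $E^2_{p,0}\cong E^2_{p,1}\cong H_p(\bar{B}_n,K)$. Reading off the abutment then gives
$$\dim H_s(B_n,K) \;\le\; \dim H_s(\bar{B}_n,K) + \dim H_{s-1}(\bar{B}_n,K),$$
with the convention that the last term is absent when $s=0$.

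Finally I will use that $Q$ is of type $F_m$: fixing a $K(Q,1)$ with a finite $m$-skeleton containing $r_s$ cells in each dimension $s\le m$, the $[Q:\bar{B}_n]$-sheeted cover yields a $K(\bar{B}_n,1)$ with $r_s\cdot[Q:\bar{B}_n]$ cells in dimension $s$, whence $\dim H_s(\bar{B}_n,K)\le r_s\,[Q:\bar{B}_n]$. Dividing the spectral-sequence estimate by $[G:B_n]=[Z:Z_n]\cdot[Q:\bar{B}_n]$, the factor $[Q:\bar{B}_n]$ cancels and we are left with
$$\frac{\dim H_s(B_n,K)}{[G:B_n]} \;\le\; \frac{r_s+r_{s-1}}{[Z:Z_n]} \;\longrightarrow\; 0$$
for every $s\le m$. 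There is no serious obstacle; the only substantive input is the observation that any exhausting normal chain in $G$ already forces $Z\cap B_n$ to have unbounded index in $Z\cong\mathbb{Z}$, which in turn absorbs all the index growth needed to kill the potentially linear homological contribution from $Q$.
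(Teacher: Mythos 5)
Your argument is correct and follows the same spectral-sequence skeleton as the paper's, but with one genuine simplification worth noting. You observe that for \emph{any} exhausting normal chain $(B_n)$ in $G$, the groups $Z_n := Z\cap B_n$ automatically satisfy $\bigcap_n Z_n\subseteq\bigcap_n B_n=\{1\}$; since each $Z_n$ has finite index in $Z\cong\Z$ (hence is nontrivial) and the chain is nested, this forces $[Z:Z_n]\to\infty$ without any special choice of chain. The paper instead constructs a particular chain $B_n=A_n\cap p^{-1}(D_n)$ with $(A_n)$, $(D_n)$ exhausting $G$ and $Q$, and then simply asserts that $[Z:Z_n]=[G:B_n]/[Q:\bar B_n]$ tends to infinity; your reasoning supplies that missing justification and shows the extra intersection with $p^{-1}(D_n)$ is not actually needed. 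The remaining ingredients coincide with the paper's: centrality of $Z$ makes the $\bar B_n$-action on $H_*(Z_n,K)$ trivial, the two-row LHS spectral sequence gives $\dim H_s(B_n,K)\le\dim H_s(\bar B_n,K)+\dim H_{s-1}(\bar B_n,K)$, and the finite $m$-skeleton of a $K(Q,1)$ gives the linear bound $\dim H_s(\bar B_n,K)\le r_s\,[Q:\bar B_n]$. Net effect: you obtain the marginally stronger conclusion that the limit is $0$ for \emph{every} exhausting normal chain in $G$, not merely for one carefully chosen chain.
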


\begin{proof} The first thing to note is that for any sequence of finite index subgroups $(C_n)$ in $Q$ and any $s\le m$
we have 
\begin{equation}\label{es}
\limsup_{n\to\infty} \  \dim H_s(C_n, K)/[Q:C_n] <\infty.
\end{equation} Indeed $Q$ has a classifying space $BQ$
with finite $m$-skeleton and $\dim H_s(C_n, K)$ is bounded by the number of $s$-cells in the 
$[Q:C_n]$-sheeted covering space of $BQ$ corresponding to $C_n$, which has $r_s[Q:C_n]$ cells of dimension $s$, where $r_s$ is
the number of $s$-cells in $BQ$. 

Let $(A_n)$ and $(D_n)$ be  exhausting chains of finite-index normal subgroups in $G$ and $Q$ respectively.
 Let $B_n=A_n\cap p^{-1}D_n$ and let $\bar{B}_n=p(B_n)$. Then  we have a central extension 
 $1\to Z_n \to B_n\to \bar{B}_n\to 1$ with $Z_n=Z\cap B_n$, 
and from the  LHS spectral sequence we have $\dim H_s(B_n, K) \leq \dim H_s(\bar{B}_n, K) + \dim H_{s-1}(\bar{B}_n, H_1(Z_n,K))$.
But $H_1(Z_n,K)$ is the trivial $K\bar{B}_n$-module $K$, because the action of $B_n$ on $Z$ by conjugation is trivial. Thus
$$
\dim H_s(B_n, K) \leq \dim H_s(\bar{B}_n, K) + \dim H_{s-1}(\bar{B}_n, K).
$$
The proof is completed by dividing this equality through by $[G:B_n]$ and letting $n$ go to infinity, using (\ref{es}) twice and noting
that $[Z:Z_n]=[G:B_n]/[Q, \bar{B}_n]$ tends to infinity.
\end{proof}

It remains to consider the case where $G$ is of type $\rm{FP}_\infty$. Theorem \ref{t:infty} says that $G$ has a subgroup of
finite index $H=H_1\times \ldots \times H_r$ where the $H_i$ are limit groups.  Let $(B_i)$ be an exhausting normal chain
in $G$ such that each $B_i$ is contained in
$H$ and decompose as
$B_i = (B_i \cap H_1) 
\times \ldots \times (B_i \cap H_r)$. Then  by the K\"unneth
formula and  by Corollary B applied for each $H_i$  we have
$$ 
\begin{aligned}
 \limii \frac{\dim
H_j(B_i, K)}{[G : B_i]} &= \frac 1{[G : H]}
 \sum_{j_1 + \ldots + j_r = j} \prod_{1 \leq s \leq r}
 \limii \frac{\dim H_{j_s}(B_i \cap H_s,
 K)}{[H_s : B_i \cap H_s]}\\
 &= \frac 1 {[G : H]}
 \sum_{j_1 + \ldots + j_r = j} \prod_{1 \leq s \leq r}
 (- \delta_{1, j_s} \chi(H_s))\\
 & = \frac 1{[G : H]}(-1)^{r} \delta_{j, r}
 \chi(H)  =  (-1)^{r} \delta_{j, r} \chi(G).
 \end{aligned}
$$
A limit group does not contain a direct product of two or more non-abelian free groups, and every non-abelian limit group contains a non-abelian free
group, so $r=\rho$ unless one or more of the $H_i$ is abelian. If some $H_i$ is abelian, then $\chi(H_i)=\chi(G)=0$.
This completes the proof of Theorem \ref{t:rf}.
\qed

\section{Rank gradient and deficiency gradient for residually free groups}

Let $G$ be a 
group and let $( B_i )$ be an exhausting  normal chain for $G$. We are interested in the rank gradient 
$${\rm{RG}}(G, ( B_i )) = \limii \frac{d(B_i)}{[G : B_i]}$$
and the deficiency gradient $${\rm{DG}} (G, ( B_i )) = \limii \frac{{\rm{def}}(B_i)}{[G : B_i]}.$$
The first limit exists because, for any nested sequence of subgroups of finite index, the sequence 
$(d(B_i)-1)/[G:B_i]$ is non-increasing and bounded below by $0$.
The second limit exists  because 
the sequence $({\rm{def}}(B_i)+1)/[G:B_i]$ is non-increasing
and bounded below by $-{\rm{RG}}(G, ( B_i ))$; cf. proof of Lemma \ref{known2}. Deficiency gradients have recently
been studied by Kar and Nikolov \cite{KN}.

The following lemma is known but we include a simple proof for the reader's convenience.

\begin{lemma} \label{known1} Let $G$ be a finitely generated residually finite group with a finitely generated infinite normal subgroup $N$ such that $G/ N$ is infinite and residually finite.  
\begin{enumerate}
\item There exist exhausting normal chains $( B_i )$ in $G$ such that $( B_i N / N )$ is an
exhausting normal chain in $G/N$.
\item For any such chain,   ${\rm{RG}}(G, ( B_i )) = 0$.
\end{enumerate}
\end{lemma}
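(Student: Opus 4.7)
The plan for part (1) is to combine exhausting chains coming from the residual finiteness of $G$ and of $G/N$. By residual finiteness of $G$, fix an exhausting normal chain $(A_i)$ in $G$ with $\bigcap_i A_i = 1$, and by residual finiteness of $G/N$, fix an exhausting normal chain $(\bar C_i)$ in $G/N$; let $C_i$ be the preimage of $\bar C_i$ under $G \to G/N$, so that $C_i \triangleleft G$ has finite index and $\bigcap_i C_i = N$. After replacing each sequence by a nested refinement, set $B_i := A_i \cap C_i$. The inclusion $B_i \subseteq A_i$ gives $\bigcap_i B_i = 1$, while $N \subseteq C_i$ forces $B_iN \subseteq C_iN = C_i$, so $\bigcap_i B_iN \subseteq \bigcap_i C_i = N$; together with the obvious reverse containment this shows $\bigcap_i(B_iN/N) = 1$, so $(B_iN/N)$ is an exhausting normal chain in $G/N$.

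For part (2), I would work with an arbitrary chain $(B_i)$ satisfying the conclusion of (1). Writing $N_i := N \cap B_i$ and $\bar B_i := B_iN/N$, the third isomorphism theorem yields a short exact sequence
$$1 \longrightarrow N_i \longrightarrow B_i \longrightarrow \bar B_i \longrightarrow 1.$$
Since $N$ is finitely generated and $N_i$ has finite index in $N$, the Reidemeister--Schreier rewriting process gives $d(N_i) \leq [N:N_i]\, d(N)$, and likewise $d(\bar B_i) \leq [G/N:\bar B_i]\, d(G/N)$. Lifting a generating set of $\bar B_i$ to $B_i$ and combining it with a generating set of $N_i$ yields
$$d(B_i) \;\leq\; d(N_i) + d(\bar B_i) \;\leq\; [N:N_i]\, d(N) + [G/N:\bar B_i]\, d(G/N).$$

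The key combinatorial identity is $[G:B_i] = [G/N : \bar B_i] \cdot [N : N_i]$, which follows from $[G:B_i] = [G:B_iN]\,[B_iN:B_i]$ together with $B_iN/B_i \cong N/N_i$. Dividing the previous bound through by $[G:B_i]$ gives
$$\frac{d(B_i)}{[G:B_i]} \;\leq\; \frac{d(N)}{[G/N:\bar B_i]} + \frac{d(G/N)}{[N:N_i]}.$$
Since $G/N$ is infinite and $(\bar B_i)$ exhausts $G/N$, the first denominator tends to infinity; similarly, $N$ is infinite and $\bigcap_i N_i \subseteq \bigcap_i B_i = 1$ forces the second to tend to infinity as well. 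Hence ${\rm RG}(G, (B_i)) = 0$, as required. There is no serious obstacle in this argument; the only subtlety is in part (1), where one must choose $(B_i)$ so as to control the chain both in $G$ and under the projection to $G/N$, after which the split of $d(B_i)$ via the short exact sequence and the index factorisation make the estimate transparent.
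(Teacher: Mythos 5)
Your proof is correct and follows essentially the same route as the paper: part (1) via intersecting an exhausting chain in $G$ with pullbacks of an exhausting chain in $G/N$, and part (2) via the bound $d(B_i)\le d(N_i)+d(\bar B_i)$ combined with the index factorisation $[G:B_i]=[N:N_i][G/N:\bar B_i]$. The only cosmetic difference is that you use $d(N_i)\le[N:N_i]\,d(N)$ where the paper invokes the sharper Reidemeister--Schreier estimate $d(\Lambda)-1\le k(d(\Gamma)-1)$, but both yield the same conclusion.
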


\begin{proof} To see that filtrations $( B_i )$ of the desired form exist,
note that if  $( H_i )$ and $(D_i)$ are exhausting normal chains for $G$ and $Q:=G/N$, and if $p : G \to Q$ is the canonical projection, 
then $B_i = H_i\cap p^{-1} (D_i)$ has the required properties. 

The proof of (2) is similar to Lemma \ref{l:zslow}. It relies on the standard fact that if $\Lambda$ is a subgroup of index $k$ 
in a group $\Gamma$, then $d(\Lambda) - 1 \le k(d(\Gamma) - 1)$. 

Let $N_i = N \cap B_i $ and $Q_i = B_i / N_i$.
Then $[G : B_i] = [N : N_i] [Q : Q_i]$ and
 $d(B_i) \leq d(N_i) + d(Q_i) \leq (d(N)-1)[N : N_i] +1 +(d(Q)-1)[Q : Q_i] +1 $. Hence
$$
\begin{aligned}
 \frac{d(B_i)}{[G : B_i]}  
 &\leq \frac 1{[G : B_i]}\big( ((d(N)-1)[N : N_i] +1)\ +  ((d(Q)-1)[Q : Q_i] +1)\big)\\
&\leq \frac{d(N)}{[Q : Q_i]} + \frac{d(Q)}{[N : N_i]} + \frac{2}{[G:B_i]}.
\end{aligned}
$$
Letting $i$ go to infinity we conclude that ${\rm{RG}}(G, ( B_i )) = 0$. 
\end{proof}

Moving up one dimension we have:

\begin{lemma} \label{known2} Let $G$ be a finitely presented residually finite group with a finitely presented  infinite normal subgroup $N$ such that $Q = G/ N$ is infinite and residually finite. Let $( B_i )$ be an exhausting normal chain for $G$, let $N_i = N \cap B_i$ and $Q_i =  B_i N / N$ and assume
that $\bigcap_i Q_i=1$.  

If  
${\rm{RG}}(N, ( N_i)) = 0$  then 
${\rm{DG}}(G, ( B_i )) = 0$.
\end{lemma}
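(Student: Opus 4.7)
My plan is to sandwich $\deff(B_i)/[G:B_i]$ between two sequences that both tend to zero. For the easy direction, note that for any finite presentation $\langle X\mid R\rangle$ of $B_i$, abelianising gives $b_1(B_i)\ge |X|-|R|$, hence $\deff(B_i)\ge -b_1(B_i)\ge -d(B_i)$. The hypotheses of Lemma~\ref{known1} are satisfied here ($N$ and $Q$ are residually finite, $N$ is finitely generated and infinite, $Q$ is infinite, and $(Q_i)$ is exhausting in $Q$ by assumption), so $\RG(G,(B_i))=0$, and therefore $\liminf_i \deff(B_i)/[G:B_i]\ge 0$.

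For the matching upper bound I will exhibit an efficient presentation of $B_i$ assembled from presentations of $N_i$ and $Q_i$. Starting from fixed finite presentations of $N$ and $Q$, Reidemeister--Schreier produces presentations of $N_i$ and $Q_i$ with $O([N:N_i])$ and $O([Q:Q_i])$ relators respectively, and a routine Tietze reduction (introduce a minimal generating set as auxiliary generators, express the old ones as words in the new, then eliminate the old) then cuts the generating sets down to the minimal sizes $u_i := d(N_i)$ and $r_i := d(Q_i)$, producing at most $v_i = O([N:N_i])$ and $t_i = O([Q:Q_i])$ relators. From these I build a presentation of $B_i$ on $X_i\cup Y_i$, where $X_i$ consists of lifts of the generators of $Q_i$ to $B_i$ and $Y_i$ generates $N_i$, using the $v_i$ relators of $N_i$, the $t_i$ lifted relators of $Q_i$ each modified by the word in $Y_i$ expressing its image in $N_i$, and the $r_i u_i$ conjugation relators $xyx^{-1}w_{x,y}^{-1}$ that encode the action of $Q_i$ on $N_i$. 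This yields
\[ \deff(B_i) \le (v_i + t_i + r_i u_i) - (r_i + u_i) \le v_i + t_i + r_i u_i. \]

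Dividing through by $[G:B_i]=[N:N_i][Q:Q_i]$ and using $[Q:Q_i]\to\infty$ (since $\bigcap_i Q_i=1$ and $Q$ is infinite) and $[N:N_i]\to\infty$ (since $\bigcap_i N_i\subseteq\bigcap_i B_i=1$ and $N$ is infinite), one sees that $v_i/[G:B_i]=O(1/[Q:Q_i])\to 0$ and $t_i/[G:B_i]=O(1/[N:N_i])\to 0$. The crucial cross term factors as
\[ \frac{r_i u_i}{[G:B_i]} \;=\; \frac{d(Q_i)}{[Q:Q_i]}\cdot \frac{d(N_i)}{[N:N_i]}, \]
in which the first factor is bounded by $(d(Q)-1)+1/[Q:Q_i]$ (the Schreier inequality) and the second tends to $0$ by the hypothesis $\RG(N,(N_i))=0$; so the product tends to $0$. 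This gives $\limsup_i \deff(B_i)/[G:B_i]\le 0$, and combined with the lower bound yields $\DG(G,(B_i))=0$. The heart of the argument is precisely this last observation: without the rank-gradient hypothesis on $(N_i)$, Reidemeister--Schreier alone only bounds $u_i$ by $O([N:N_i])$, and then $r_iu_i/[G:B_i]$ is merely $O(1)$; the hypothesis is used in an essential way to force this cross term to vanish in the limit.
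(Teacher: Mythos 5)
Your proof is correct and follows essentially the same route as the paper's: both build a presentation of $B_i$ by combining Reidemeister--Schreier presentations of $N_i$ and $Q_i$ with conjugation relators, observe that the dominant term in the relator count is (generators of $Q_i$)$\times d(N_i)$, and kill this cross term using $\RG(N,(N_i))=0$ together with the Schreier bound on $Q_i$, while the lower bound $\deff\ge -d$ and Lemma~\ref{known1} give $\DG\ge 0$. The only variation is cosmetic: you Tietze-reduce \emph{both} $N_i$ and $Q_i$ to minimal generating sets before assembling the extension presentation, whereas the paper keeps the full Reidemeister--Schreier generating set $Y_i$ for $Q_i$ and only passes to a minimal generating set $\underline X_i$ of $N_i$ when forming the conjugation relators $T_i$ (so the cross term is $d(N_i)\,|Y_i|$ rather than $d(N_i)\,d(Q_i)$); since $|Y_i|/[Q:Q_i]\to|Y|-1$ is bounded, this changes nothing in the limit.
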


\begin{proof} There is a standard procedure that,
given finite presentations $N_i = \langle X_i \mid R_i \rangle$ and $Q_i = \langle Y_i \mid S_i \rangle$
will construct a finite presentation $B_i=\langle X_i \cup Y_i \mid R_i, \tilde{S}_i, T_i\rangle$ where $|\tilde S_i| = |S_i|$
and $|T_i|=|d(N_i)|.|Y_i|$. 

In more detail, one lifts the canonical projection from the free group $F(Y_i)\to Q_i=B_i/N_i$ to obtain $\mu:F(Y_i)\to B_i$, then
proceeds as follows. For each
$\sigma\in S_i$ one chooses a word $u_\sigma\in F(X_i)$ such that $\mu(\sigma)u_\sigma$ equals $1\in B_i$; then
$\tilde S_i\subset F(X_i\cup Y_i)$ is defined to
consist of the words $\sigma u_\sigma$. To define $T_i$, one first fixes a generating set $\underline X_i$
for $N_i$ with $|\underline X_i|=d(N_i)$. Then, for each $x\in\underline X_i$ one chooses a word $\eta_x\in F(X_i)$ such that $x=\eta_x$
in $N_i$ and for each $y\in Y_i$ one chooses a word
$v_{xy}\in F(X_i)$ so that $v_{xy}=\mu(y)x\mu(y)^{-1}$ in $B_i$. The set $T_i$ consists of the words $y\eta_x y^{-1}v_{xy}^{-1}$.
(This process, although well-defined, is not algorithmic because there is no algorithm that, given a finite presentation,
can  identify a generating set of minimal cardinality for the group presented.) 

%%% Proof that this really is a presentation: given a word $W$ in $F(X\cup Y)$ that equals $1\in B_i$, use the relations $R_i$ to replace each letter
%% from $X_i$ with a word in the letters $\eta_x$, then apply the relations $T_i$ to transform $W$ into $W'W''$ where $W'$ is a word in the
%% symbols $\eta_x$ and $W''$ is a word in the symbols $y$. Since $W'' = 1$ in $Q$, it is equal in $F(Y_i)$ to a product of conjugates of relations $\sigma_j$;
%% and hence is equal modulo the relations $\tilde S_i$ to a product of the form $\prod_j \theta_j u_{\sigma_j} \theta_j^{-1}$ where $\theta_j$ is a word
%% in the letters $X_i$. Once again we use the relations $R_i$ to transform each letter from $X_i$ into a word in the symbols $\eta_x$, then we use
%% the relations $T_i$ to transform the product into a word in the letters $X_i$. Thus we have succeeded in transforming $W$ (our
%% original word, into a word in the letters $X_i$ alone, and since $N_i$ injects into $B_i$, this word must be trival in $N_i$, so we can reduce
%% it to the empty word by applying relations from $R_i$.

Given a finitely presented group $\Gamma=\langle \Upsilon \mid \Sigma \rangle$ and a subgroup $\G_i<\G$ of index $k$, one obtains a 
presentation $\langle \Upsilon_i \mid \Sigma_i\rangle$ with $|\Upsilon_i |-1= k(|\Upsilon |-1)$ 
and $|\Sigma_i |=k|\Sigma|$ by the Reidemeister-Schreier rewriting process.
(Topologically, this amounts to taking a $k$-sheeted covering of the standard 2-complex for $\langle \Upsilon \mid \Sigma\rangle$ and collapsing a 
maximal tree in the $1$-skeleton.) In particular, 
\begin{equation}\label{save}
\limii \frac{|\Upsilon_i|}{[\Gamma : \Gamma_i]}=|\Upsilon | - 1 \text{  and  } \limii \frac{|\Sigma_i |}{[\Gamma : \Gamma_i]} = |\Sigma |.
\end{equation}

We fix finite presentations $N=\langle X\mid R\rangle$ and $Q=\langle Y \mid S\rangle$ and apply the construction of the previous paragraph to
construct presentations $N_i = \langle X_i \mid R_i\rangle$ and $Q_i=\langle Y_i \mid S_i\rangle$, and from these we construct
a presentation $\langle X_i \cup Y_i \mid R_i, \tilde{S}_i, T_i\rangle$ for $B_i$. By definition, the deficiency of this presentation is an upper bound
on the deficiency of $B_i$, so 
$$
\begin{aligned}
 {\rm{def}}(B_i) &\le |T_i| + (|R_i| - |X_i|) + (|S_i|- |Y_i|)\\
 & = d(N_i)\, |Y_i| + (|R_i| - |X_i|) + (|S_i|- |Y_i|).
 \end{aligned}
 $$
Dividing by $[G:B_i] = [N:N_i].[Q:Q_i]$ we get
$$
\frac{1}{[G:B_i]} {\rm{def}}(B_i) \le \frac{d(N_i)}{[N:N_i]} \frac{|Y_i|}{[Q:Q_i]} + \frac{1}{[Q:Q_i]} \frac{|R_i| - |X_i|}{[N:N_i]} +
 \frac{1}{[N:N_i]} \frac{|S_i|- |Y_i|}{[Q:Q_i]}.
$$
Taking the limit $i\to\infty$, the second and third summands on the right tend to zero, by (\ref{save}), while the first tends to   
${\rm{RG}}(N, ( N_i )).(|Y|-1)$, which is zero by hypothesis. Thus ${\rm{DG}}(G, (B_i))  \le 0$.

On the other hand, for any finitely presented group, ${\rm{def}}(\Gamma) \ge  -d(\Gamma)$, because $\G$
has a finite presentation on a set of $d(\G)$ generators. Thus
$-d(B_i) \le {\rm{def}}(B_i)$ and 
$$
-{\rm{RG}}(G, (B_i)) \le  {\rm{DG}}(G, (B_i))  \le 0.
$$
The first term is zero, by Lemma \ref{known1}, so the lemma is proved.
\end{proof}

The following theorem can be viewed as a homotopical version of Theorem \ref{t:rf} in low dimensions. We believe that the condition that $G$ is of type $\FP_3$ is too strong and that type $\FP_2$ is enough (equivalently, finite presentability) as in the homological case, i.e. Theorem \ref{t:rf}, part (1). It 
also seems likely that higher dimensional analogues of this result hold, but we cannot
resolve this problem because we do not have a complete characterisation of the residually free groups (i.e. subdirect products of limit groups) 
that are of type $\FP_m$ for $m \geq 3$. More specifically, the following conjecture remains open:
  for a full subdirect product $H \leq G_1 \times \ldots \times G_n$ with each $G_i$ a non-abelian limit group, $H$ is of type $\FP_m$ for some $m \leq n$  if and only if for every $1 \leq j_1 < \ldots < j_m \leq n$ the index of $\p_{j_1, \ldots, j_m}(H)$ in $G_{j_1} \times \ldots \times G_{j_m}$ is finite.

\begin{theorem}\label{DGFP3} Every  finitely presented residually free group $G$ that is not a limit group admits an exhausting normal chain  $(B_n)$ with respect to which the rank gradient 
$$\RG(G, ( B_n )) = \lim_{n\to\infty} \frac{ d(B_n)}{ [G : B_n]} = 0.$$ 
Furthermore,
if $G$ is of type $\FP_3$ but is not commensurable with a product of two limit groups,
 $(B_n)$  can be chosen so that the deficiency gradient
$\DG(G, ( B_n )) = 0$.  
\end{theorem}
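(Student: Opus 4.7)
The plan is to produce a finitely generated (resp.\ finitely presented) infinite normal subgroup $N\lhd G$ with infinite residually-finite quotient, then invoke Lemma~\ref{known1} for the rank-gradient statement and Lemma~\ref{known2} for the deficiency statement. First, by Theorem~\ref{t:bhms}, embed $G\hookrightarrow\Lambda_1\times\cdots\times\Lambda_n$ as a full subdirect product of limit groups with every pairwise projection of finite index; since $G$ is not a limit group, $n\ge 2$.

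For the rank-gradient statement I split on $n$. If $n=2$, then the pairwise finite-index hypothesis forces $G$ to have finite index in $\Lambda_1\times\Lambda_2$, so $N:=G\cap\Lambda_2$ is a finite-index subgroup of the limit group $\Lambda_2$ (hence finitely generated and infinite) and $G/N$ is commensurable with the limit group $\Lambda_1$; Lemma~\ref{known1} applies immediately. If $n\ge 3$, I take $N:=\ker(p_n|_G)=G\cap(\Lambda_1\times\cdots\times\Lambda_{n-1})$, so $G/N\cong p_n(G)=\Lambda_n$ is an infinite residually-finite limit group. Since $p_{n-1,n}(G)$ has finite index in $\Lambda_{n-1}\times\Lambda_n$, its intersection with $\Lambda_{n-1}\times\{1\}$ has finite index in $\Lambda_{n-1}$, which shows $p_{n-1}(N)$ has finite index in $\Lambda_{n-1}$ and hence $N$ is infinite. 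The key input here is the Weak Virtual Surjections Theorem \cite[Cor.~5.5]{BK}, which extracts finite generation of $N$ from the $\FP_2$-hypothesis on $G$ together with the finite-index pairwise projection condition. Lemma~\ref{known1} then supplies an exhausting chain with $\RG(G,(B_n))=0$.

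For the deficiency statement the additional hypotheses exclude $n\le 2$, so $n\ge 3$, and Theorem~\ref{t:desi2} ensures that every triple projection $p_{i,j,k}(G)$ is also of finite index. A stronger application of the Weak Virtual Surjections Theorem then promotes the previous conclusion to ``$N$ is finitely presented''. To apply Lemma~\ref{known2} I still need an exhausting chain $(N_i)$ in $N$ with $\RG(N,(N_i))=0$; since $N$ is a finitely presented residually-free subgroup of $\Lambda_1\times\cdots\times\Lambda_{n-1}$, I would obtain such a chain by applying the rank-gradient part of the theorem recursively to $N$ (via a new application of Theorem~\ref{t:bhms} to $N$), provided $N$ is not itself a limit group.

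The hard part will be ruling out that $N$ is a limit group. If it were, $G$ would be an extension $1\to N\to G\to\Lambda_n\to 1$ of one limit group by another, and I would need to derive a contradiction from the hypothesis that $G$ is not commensurable with a product of two limit groups. The plan would be to pass to a finite-index subgroup of $G$ in which the extension virtually splits — exploiting commutative transitivity and the centraliser structure of limit groups together with the rigidity forced by the pairwise and triple finite-index projection conditions on the embedding $G\le\Lambda_1\times\cdots\times\Lambda_n$ — and thereby conclude that $G$ is commensurable with $N\times\Lambda_n$, contradicting the hypothesis. Once this case is excluded, the inductive chain on $N$ feeds into Lemma~\ref{known2} to give $\DG(G,(B_n))=0$, completing the proof.
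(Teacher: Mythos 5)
Your rank-gradient argument is essentially the paper's, up to the choice of reference for finite generation of $N=G\cap(\Lambda_1\times\cdots\times\Lambda_{n-1})$ (the paper cites Prop.\ 3.2(3) of \cite{BHMS2} rather than \cite[Cor.~5.5]{BK}); you should, however, dispose separately of the case in which some $\Lambda_i$ is abelian, since Theorem~\ref{t:desi2} is stated only for non-abelian limit groups --- the paper handles this case directly, taking the normal subgroup in Lemmas~\ref{known1} and~\ref{known2} to be the free-abelian centre of $G$.

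The genuine gap is in the deficiency part. You propose to produce the required chain $(N_i)$ with $\RG(N,(N_i))=0$ by applying the theorem recursively to $N$, which forces you to rule out that $N$ is a limit group; you flag this as ``the hard part'' and sketch a speculative rigidity/virtual-splitting argument. Two things go wrong. First, the worry has a much more elementary resolution than you suggest: once $n\ge 3$ and all factors are non-abelian, $N$ is a full subdirect product of the $\ge 2$ non-abelian limit groups $p_i(N)\le\Lambda_i$ ($i<n$), so it contains $(N\cap\Lambda_1)\times(N\cap\Lambda_2)\supseteq F_2\times F_2$, and no limit group contains $F_2\times F_2$; there is nothing to rule out. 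Second, and more importantly, this is not what the paper does: rather than recursing, it reruns the argument of Lemma~\ref{known1} on $N$ directly via the surjection $N\to p_1(N)$, whose image has finite index in $\Lambda_1$ and whose kernel $N\cap(\Lambda_2\times\cdots\times\Lambda_{n-1})$ is nontrivial and finitely generated; this produces $(N_i)$ with $\RG(N,(N_i))=0$ with no case split and no appeal to the structure theory you invoke. Finally, even granting a source of such a chain for $N$, you still must arrange the exhausting chain $(B_i)$ of $G$ so that $N\cap B_i=N_i$ and $(B_iN/N)$ exhausts $G/N$ simultaneously --- Lemma~\ref{known2} needs both. The paper does this explicitly by intersecting three chains, setting $B_i = A_i\cap p_1^{-1}(D_i)\cap p_n^{-1}(\overline Q_i)$; your sketch omits this interleaving, and without it the appeal to Lemma~\ref{known2} does not close.
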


\begin{proof} 
We use Theorem \ref{t:bhms} to embed $G$ as a full subdirect product of limit groups
$G \leq G_1 \times \ldots \times G_m$ such that each of the projections $\p_{j_1,j_2}(G)<G_{j_1}\times G_{j_2}$ is of finite
index; as $G$ is not a limit group, $m\ge 2$.
 There will be an abelian factor $G_i$ if and only if $G$ has a non-trivial (free-abelian) centre.
If $G$ has such a centre, the theorem follows immediately from Lemmas \ref{known1} and \ref{known2}, so henceforth we assume that
the $G_i$ are all non-abelian. 

Proposition 3.2(3) of \cite{BHMS2} states that $M := G \cap (G_1 \times \ldots \times G_{m - 1})$ is finitely generated.
The quotient $G/M=G_m$ is residually finite. Hence, by 
Lemma \ref{known1}, there is an exhausting normal chain $(B_i)$ with ${\rm{RG}}(G, ( B_i )) = 0$, as required.

If $G$ is of type $\FP_3$ but not virtually a product of two limit groups then $m\ge 3$ and
Theorem \ref{t:desi2} tells us for every $1 \leq j_1 < j_2 < j_3 \leq m$ the projection $\p_{j_1, j_2, j_3}(G)$ has finite index in $G_{j_1} \times G_{j_2} \times G_{j_3}$. In particular this holds for $1 \leq j_1 < j_2 < j_3 = m$, so for every $1 \leq j_1 < j_2 \leq m-1$ we see that $p_{j_1, j_2} (M)$ has finite index in $G_{j_1} \times G_{j_2}$. It follows from Theorem \ref{t:bhms} (or the Virtual Surjections Theorem of \cite{BHMS2})
that $M$ is finitely presented. To complete the proof, we want to appeal to Lemma \ref{known2} with $M=N$ and $Q=G_m$, but first we must construct a
chain $(B_i)$ as described in that lemma. To this end, we fix exhausting normal chains $(D_i),\, (A_i)$ and $(\overline Q_i)$
for $G_1,\, G$ and $G_m$, respectively. 

In the first step of the construction we follow the proof of Lemma \ref{known1} with $M\to \p_1(M)$
playing the role of the map $G\to G/N$ that was considered there, and with $H_i:= A_i\cap M$.
As in that proof, ${\rm{RG}}(M, (M_i)) = 0$, where $M_i:=H_i\cap \p_1^{-1}(D_i)$.

Finally, we define $B_i=A_i\cap \p_1^{-1}(D_i) \cap\pi_m^{-1}(\overline Q_i)$. This is
an exhausting normal chain for $G$ with $M_i=B_i\cap M$. Moreover, $Q_i:= B_i/M_i \subset \overline Q_i$, so $\bigcap_iQ_i=1$.
Thus Lemma \ref{known2} applies and the theorem is proved. 
\end{proof}

\begin{remark} The exceptions made in the statement of Theorem \ref{DGFP3} are necessary: if $G_i$, $i = 1,2$, is a limit group then Theorem A(1)
tells us that the rank gradient of $G_i$ is $-\chi(G_i)$, from which it is easy to deduce that if $G=G_1\times G_2$ is a product of two limit groups then its
deficiency gradient is $\chi(G_1)\chi(G_2)$. 
\end{remark}

 \end{document}